\definecolor{labelkey}{rgb}{0,0.08,0.45}
\definecolor{refkey}{rgb}{0,0.6,0.0}
\definecolor{Brown}{rgb}{0.45,0.0,0.05}
\definecolor{dgreen}{rgb}{0.00,0.49,0.00}
\definecolor{dblue}{rgb}{0,0.08,0.75}
\definecolor{overleaf_mostly_pure_blue}{rgb}{0, .502, 1} 
\definecolor{overleaf_dark_moderate_blue}{rgb}{.353, .361, .678} 
\definecolor{overleaf_dark_moderate_cyan}{rgb}{.247, .498, .498} 
\definecolor{colorhexa_dark_blue}{rgb}{0, .4, .6} 
\definecolor{colorhexa_dark_orange}{rgb}{.6, .2, 0} 
\newtheorem{theorem}{Theorem}[section]
\newtheorem{corollary}[theorem]{Corollary}
\newtheorem{lemma}[theorem]{Lemma}
\newtheorem{proposition}[theorem]{Proposition}
\newtheorem{assumptions}[theorem]{Assumptions}
\theoremstyle{definition}
\newtheorem{algorithm}[theorem]{Algorithm}
\newtheorem{remark}[theorem]{Remark}
\numberwithin{equation}{section}
\providecommand{\norm}[1]{\lVert#1\rVert}
\providecommand{\scalarp}[1]{\langle#1\rangle}
\newcommand{\Fk}{\ensuremath{\mathfrak F}}
\newcommand{\EE}{\ensuremath{\mathsf E}}
\newcommand{\PP}{\ensuremath{\mathsf P}}
\newcommand{\R}{\ensuremath \mathbb{R}}
\newcommand{\HH}{\ensuremath \mathsf{H}}
\newcommand{\HHs}{\ensuremath \mathsf{H}}
\newcommand{\bxs}{\ensuremath \bm{\mathsf{x}}}
\newcommand{\bys}{\ensuremath \bm{\mathsf{y}}}
\newcommand{\bas}{\ensuremath \bm{\mathsf{a}}}
\newcommand{\xx}{\ensuremath \mathsf{x}}
\newcommand{\zz}{\ensuremath \mathsf{z}}
\newcommand{\yy}{\ensuremath \mathsf{y}}
\newcommand{\N}{\ensuremath \mathbb{N}}
\newcommand{\prox}{\ensuremath{\text{\textsf{prox}}}}
\DeclareMathOperator*{\argmin}{\text{\rm{argmin}}}
\DeclareMathOperator*{\dist}{\text{\rm{dist}}}
\newcommand{\minimize}[2]{\ensuremath{\underset{\substack{{#1}}}{\text{\textrm{minimize}}}\;\;#2 }}
\newcommand{\bSS}{\ensuremath{\mathsf{S}}}
\newcommand{\bxx}{\ensuremath \bm{\xx}}
\newcommand{\byy}{\ensuremath \bm{\yy}}
\newcommand{\bphi}{\ensuremath \bm{\phi}}
\newcommand{\bx}{\ensuremath \bm{x}}
\newcommand{\by}{\ensuremath \bm{y}}
\newcommand{\bv}{\ensuremath \bm{v}}
\newcommand{\bg}{\ensuremath \bm{e}}
\newcommand{\bu}{\ensuremath \bm{u}}
\newcommand{\bb}{\ensuremath \bm{b}}
\newcommand{\bw}{\ensuremath \bm{w}}
\newcommand{\bvarphi}{\ensuremath{{\varphi}}}
\newcommand{\bbar}{\bm \bar}
\newcommand{\grandO}[1]{O\mathopen{}\left(#1\right)}
\begin{document}

\title{ {\sffamily Variance reduction techniques for stochastic proximal point algorithms}}
\author{Cheik Traor\'e$\,^{\textrm{\Letter}}\,$\thanks{Corresponding author: Malga Center, DIMA, Universit\`a degli Studi di Genova, Genova, Italy (traore@dima.unige.it). {\scriptsize This project has received funding from the European Union’s Horizon 2020 research and innovation programme under the Marie Skłodowska-Curie grant agreement No 861137.}}\,,\ \
Vassilis Apidopoulos\thanks{Malga Center, Dibris, Universit\`a degli Studi di Genova, Genova, Italy (vassilis.apid@gmail.com).}\,,\ \
Saverio Salzo\thanks{Sapienza Universit\`a di Roma, Roma, Italy (salzo@diag.uniroma1.it).}\ \
and Silvia Villa\thanks{Malga Center, DIMA, Universit\`a degli Studi di Genova, Genova, Italy (silvia.villa@unige.it). {\scriptsize She acknowledges the financial support of the European
Research Council (grant SLING 819789), the AFOSR projects FA9550-17-1-0390, FA8655-22-1-7034, and BAAAFRL-
AFOSR-2016-0007 (European Office of Aerospace Research and Development), and the EU H2020-MSCA-RISE project NoMADS - DLV-777826.}}
        }
\date{}
\maketitle

\begin{abstract}
\noindent
 In the context of finite sums minimization, variance reduction techniques are widely used to improve the performance of state-of-the-art stochastic gradient methods. Their practical impact is clear, as well as their theoretical properties. Stochastic proximal point algorithms have been studied as an alternative to stochastic gradient algorithms since they are more stable with respect to the choice of the step size. However, their variance-reduced versions are not as well studied as the gradient ones. In this work, we propose the first unified study of variance reduction techniques for stochastic proximal point algorithms. We introduce a generic stochastic proximal-based algorithm that can be specified to give the proximal version of SVRG, SAGA, and some of their variants. 
For this algorithm, in the smooth setting, we provide several  convergence rates for the iterates and the objective function values, which are faster than those of the vanilla stochastic proximal point algorithm. More specifically, for convex functions, we prove a sublinear convergence rate of $O(1/k)$. In addition, under the Polyak-Łojasiewicz (PL) condition, we obtain linear convergence rates. Finally, our numerical experiments demonstrate the advantages of the proximal variance reduction methods over their gradient counterparts in terms of the stability with respect to the choice of the step size in most cases, especially for difficult problems.
\end{abstract}

\vspace{2ex}
\noindent
{\bf\small Keywords.} {\small Stochastic optimization, proximal point algorithm, variance reduction techniques, SVRG, SAGA.}\\[1ex]
\noindent
{\bf\small AMS Mathematics Subject Classification:} {\small 65K05, 90C25, 90C06, 49M27}

\vspace{2ex}
\begin{section}{Introduction}
The objective of the paper is to solve the following finite-sum optimization problem
\begin{align}
  \label{prob:main}
    \minimize{ \bxs \in \HHs}{F(\bxs)} = \frac{1}{n}\sum_{i = 1}^{n}f_i(\bxs),
\end{align}
where  $\HHs$ is a separable Hilbert space and for all $i \in \{1,2, \cdots, n \}$, $f_i:\HHs\to \R$.

Several problems can be expressed as in \eqref{prob:main}. The most popular example is the Empirical Risk Minimization (ERM) problem in machine learning \cite[Section 2.2]{shalev2014understanding}. In that setting, $n$ is the number of data points, $\bxs\in\mathbb{R}^d$ includes the parameters of a machine learning model (linear functions, neural networks, etc.), and the function $f_i$ is the loss of the model $\bxs$ at the $i$-th data point.
Due to the large scale of data points used in machine learning/deep learning, leveraging gradient descent (GD) for the problem \eqref{prob:main} can be excessively costly both in terms of computational power and storage. To overcome these issues, several stochastic variants of gradient descent have been proposed in recent years.
\paragraph{Stochastic Gradient Descent.} The most common version of stochastic approximation \cite{robbins1951stochastic} applied to \eqref{prob:main} is that where at each step the full gradient is replaced by $\nabla f_i$, the gradient of a function $f_i$, with $i$ sampled uniformly among $\{1, \cdots, n\}$. This procedure yields stochastic gradient descent (SGD), often referred to as incremental SGD. In its vanilla version or modified ones (AdaGrad \cite{duchi2011adaptive}, ADAM \cite{kingma2014adam}, etc.), SGD is ubiquitous in modern machine learning and deep learning. Stochastic approximation  \cite{robbins1951stochastic} provides the appropriate framework to study the theoretical properties of the SGD algorithm, which are nowadays well understood \cite{bottou1999line,bottou2010large,moulines2011non,khaled2020better}. Theoretical analysis shows that SGD has a worse convergence rate compared to its deterministic counterpart GD. Indeed, GD exhibits a convergence rate for the function values ranging from $O(1/k)$ for convex functions to $O(\exp\{-C_{\mu} k\})$ for  $\mu$-strongly convex functions, while for SGD the convergence rates of the function values vary from $O(1/\sqrt{k})$ to $O(1/k)$. In addition, convergence of SGD is guaranteed if the step size sequence is square summable, but not summable. In practice, this requirement is not really meaningful, and the appropriate choice of the step size is one of the major issues in SGD implementations. In particular, if the initial step size is too large, the SGD blows up even if the sequence of step sizes satisfies the suitable decrease requirement; see, e.g., \cite{moulines2011non}. Therefore, the step size needs to be tuned by hand, and for solving problem \eqref{prob:main},  this is typically time consuming.

\paragraph{Stochastic Proximal Point Algorithm.} Whenever the computation of the proximity operator of $f_i$, $\prox_{f_i}$ (for a definition see notation paragraph \ref{notation subsection}), is tractable, an alternative to SGD is the stochastic proximal point algorithm (SPPA). Instead of the gradient $\nabla f_i$, the proximity operator of a $f_i$, chosen randomly, is used in each iteration. Recent works, in particular \cite{ryu2014stochastic,asi2019stochastic, kim2022convergence}, showed that SPPA is more robust to the choice of step size with respect to SGD. In addition, the convergence rates are the same as those of SGD, in various settings \cite{bertsekas2011incremental,patrascu2017nonasymptotic,asi2019stochastic}, possibly including a momentum term \cite{kim2022convergence,toulis2016towards, toulis2017asymptotic, toulis2015proximal}. 
\paragraph{Variance reduction methods.} As  observed above, the convergence rates for SGD are worse than those of their deterministic counterparts. This is due to the non-vanishing variance of the stochastic estimator of the true gradient. In order to circumvent this issue, starting from SVRG \cite{johnson2013accelerating,allen2016improved}, a new wave of SGD algorithms was developed with the aim of reducing the variance and recovering standard GD rates with constant step size. Different methods were developed and all share a $O(1/k)$ and a $O(e^{-C_{\mu}k})$ convergence rate for function values for convex and $\mu$-strongly convex objective functions, respectively. In the convex case, the convergence is ergodic. Apart from SVRG, the other methods share the idea of reducing the variance by aggregating different stochastic estimates of the gradient. Among them we mention  SAG \cite{schmidt2017minimizing} and SAGA \cite{defazio2014saga}. In subsequent years, a plethora of papers have appeared on variance reduction techniques; see, for example, \cite{kovalev2020don, fang2018spider, wang2019spiderboost, nguyen2017sarah}. The paper \cite{gorbunov2020unified} provided a unified study of variance reduction techniques for SGD that encompasses many of them. 
The latter work \cite{gorbunov2020unified} has inspired our unified study of variance reduction for SPPA. 
{\paragraph{Variance-reduced Stochastic Proximal Point Algorithms.} The application of variance reduction techniques to SPPA is very recent and limited: the existing methods are Point-SAGA in \cite{defazio2016simple}, the proximal  version of L-SVRG \cite{kovalev2020don}, and  SNSPP proposed in \cite{milzarek2022semismooth}. All existing convergence results are provided in the smooth case, except for Point-SAGA in \cite{defazio2016simple}, where an ergodic and sublinear convergence rate with a constant step size is provided for nonsmooth strongly convex functions. }

\paragraph{Contributions.}
Our contribution can be summarized as follows:
\begin{itemize}
    \item  Assuming that the functions $f_i$ are smooth, we propose a unified variance reduction technique for stochastic proximal point algorithm (SPPA). We devise a unified analysis that extends several variance reduction techniques used for SGD to SPPA, as listed in Section \ref{sect:appli}, with improved rates over SPPA. In particular, we prove a sublinear convergence rate $\mathcal{O}(1/k)$ of the function values for convex functions. The analysis in the convex case is new in the literature. Assuming additionally that the objective function $F$ satisfies the Polyak-Łojasiewicz (PL) condition, we prove linear convergence rate both for the iterates and the function values. The PL condition on $F$ is less strong than the strong convexity of $F$ or even $f_i$ used in the related previous work. Finally, we show that these results are achieved for constant step sizes.
    \item As a byproduct, we derive and analyze some stochastic variance-reduced proximal point algorithms, in analogy to SVRG \cite{johnson2013accelerating}, SAGA \cite{defazio2014saga} and L-SVRG \cite{kovalev2020don}.
    \item The experiments show that, in most cases and especially for difficult problems, the proposed methods are more robust to the step sizes and converge with larger step sizes, while retaining at least the same speed of convergence as their gradient counterparts. This generalizes the advantages of SPPA over SGD (see \cite{asi2019stochastic, kim2022convergence}) to variance reduction settings.
\end{itemize}

\paragraph{Organization.} The rest of the paper is organized as follows: In Section \ref{sect:algo}, we present our generic algorithm and the assumptions we will need in subsequent sections. In Section \ref{sect:results}, we show the results pertaining to that algorithm. Then, in Section \ref{sect:appli}, we specialize the general results to particular variance reduction algorithms.  Section \ref{sect:experiments} collects our numerical experiments. Proofs of auxiliary results can be found in Appendix \ref{app:proof}. 
\end{section}
\begin{section}{Algorithm and assumptions}\label{sect:algo}
\begin{subsection}{Notation}\label{notation subsection}
We first introduce the notation and recall a few basic notions that will be needed throughout the
paper. We denote by $\N$ the set of natural numbers (including zero) and by $\R_+ = \left[0,+\infty\right[$ the set of positive real numbers.
For every integer $\ell \geq 1$, we define $[\ell] = \{1, \dots, \ell\}$.  $\HHs$ is a Hilbert space endowed with  scalar product  $\langle\cdot, \cdot\rangle$ and induced norm $\|\cdot\|$. If $\bSS \subset \HHs$ is convex and closed and $\bxs \in \HHs$, we set $\mathrm{dist}(\bxs,\bSS) = \inf_{\bm{\zz} \in \bSS} \norm{\bxx - \bm{\zz}}$. The projection of $\bxs$ onto $\bSS$ is denoted by $P_{\bSS}(\bxs)$. 

Bold default font is used for random variables taking values in $\HH$, while bold sans serif font is used for their realizations or deterministic variables in $\HH$. 
The probability space underlying random variables is denoted by $(\Omega, \mathfrak{A}, \PP)$. For every random variable $\bx$, $\EE[\bx]$ denotes its expectation, while if $\Fk \subset \mathfrak{A}$ is a sub $\sigma$-algebra we denote by $\EE[\bx\,\vert\, \Fk]$ the conditional expectation of $\bx$ given $\Fk$. Also, $\sigma(\by)$ represents the $\sigma$-algebra generated by the random variable $\by$.

Let $\bvarphi\colon \HHs \to \R$ be a function. The set of minimizers of $\bvarphi$ is $\argmin \bvarphi 
= \{\bxs \in \HHs \,\vert\, \bvarphi(\bxs) = \inf \bvarphi\}$. If $\inf \bvarphi$ is finite, it is represented by $\bvarphi_*$. When $\bvarphi$ is differentiable $\nabla \bvarphi$ denotes the gradient of $\bvarphi$. We recall that the proximity operator of $\varphi$ is defined as $\prox_{\varphi}(\bxs) = \argmin_{\bys \in \HHs} \varphi(\bys) + \frac{1}{2} \norm{\bys-\bxs}^2$.

In this work, $\ell^1$ represents the space of sequences which norms are summable and $\ell^2$ the space of sequences which norms are square summable.
\end{subsection}
\begin{subsection}{Algorithm}
In this paragraph, we describe a generic method for solving problem \eqref{prob:main}, based on the stochastic proximal point algorithm.
\begin{algorithm}\label{algo:unified}
Let $(\bg^k)_{k \in \N}$ be a sequence of random vectors in $\HHs$ and let $(i_k)_{k \in \N}$ be a sequence of i.i.d. random variables uniformly distributed on $\{1,\dots, n\}$, so that $i_k$ is independent of $\bg^0$, \dots, $\bg^{k-1}$. Let $\alpha >0$ and set the initial point $\bx^0 \equiv \bxs^0 \in \HHs$. Then define
\begin{equation*}
\begin{array}{l}
\text{for}\;k=0,1,\ldots\\
\left\lfloor
\begin{array}{l}
\bx^{k+1} = \prox_{\alpha f_{i_{k}}}\big(\bx^k  + \alpha\bg^k \big). 
\end{array}
\right.
\end{array}
\end{equation*}
\end{algorithm}

Algorithm \ref{algo:unified} is a stochastic proximal point method including an additional term $\bg^k$, that will be suitably chosen to reduce the variance. As we shall see in Section \ref{sect:appli}, depending on the specific algorithm (SPPA, SVRP, L-SVRP, SAPA), $\bg^k$ may be defined in various ways. Note that $\bx^{k+1}$ is a random vector depending on $\bx^k, i_k$, and $\bg^k$. 
{
By definition of the proximal operator, we derive from Algorithm~\ref{algo:unified}
\begin{align*}
    \bx^{k+1} &= \prox_{\alpha f_{i_{k}}}\big(\bx^k  + \alpha\bg^k \big)\\
    &= \argmin_{\bx \in \HH} f_{i_{k}}(\bx) + \frac{1}{2\alpha} \|\bx^k  + \alpha\bg^k - \bx\|^2.
\end{align*}
By the optimality condition and assuming $f_{i_k}$ is differentiable, we have
\begin{align}\label{eq:20240404a}
    0 = \nabla f_{i_{k}}(\bx^{k+1}) + \frac{1}{\alpha}\left(\bx^{k+1} - \bx^{k} - \alpha\bg^k\right).
\end{align}
Let $\bw^k = \nabla f_{i_k}(\bx^{k+1}) - \bg^k$. Thanks to \eqref{eq:20240404a}, the update in Algorithm \ref{algo:unified} can be rewritten as
\begin{equation}\label{eq:20220915c}
    \bx^{k+1}=\bx^k-\alpha\big[\nabla f_{i_k} (\bx^{k+1}) - \bg^k \big] = \bx^k -\alpha \bw^k.
\end{equation}

Keeping in mind that actually $\bw^k$ depends on $\nabla f_{i_k} (\bx^{k+1})$,  Equation \eqref{eq:20220915c} shows that Algorithm \ref{algo:unified} can be seen as an implicit stochastic gradient method, in contrast to an explicit one, where 
$\bw^k$ is replaced by
\begin{equation}\label{defvk}
    \bv^k \coloneqq \nabla f_{i_k}(\bx^{k}) - \bg^k.
\end{equation}
This point of view has been exploited in \cite[Appendix A]{gorbunov2020unified}, to provide a unified theory for variance reduction techniques for SGD.}
\begin{remark}
{ Due to the dependence of $\bx^{k+1}$ on $i_k$, in general $\EE[\nabla f_{i_k}(\bx^{k+1})\,\vert\,\bx^k] \neq \nabla F(\bx^{k+1})$ and $\EE[f_{i_k}(\bx^{k+1})\,\vert\,\bx^k] \neq F (\bx^{k+1})$, making the analysis of Algorithm \ref{algo:unified} tricky. We circumvent that problem using $\bv^k = \nabla f_{i_k}(\bx^{k}) - \bg^k$ as an auxiliary variable; see \eqref{eq:20220915d}. This is helpful since $\bx^k$ is independent of $i_k$. Therefore, even though $\bv^k$ does not appear explicitly in Algorithm \ref{algo:unified}, it is still relevant in the associated analysis of the convergence bounds, and, for those derivations, some assumptions will be required on $\bv^k$.}
\end{remark}

\begin{remark}{ Variance reduction techniques have been already connected to the proximal gradient algorithm to solve structured problems of the form $\sum_i f_i+g$, where $f_i$ are assumed to be smooth and $g$ is just prox friendly. Indeed, this is the  model corresponding to regularized empirical risk minimization \cite{shalev2014understanding}. For this objective functions the stochastic proximal gradient algorithm is as follows
\[
x_{k+1}=\prox_{\gamma_k g} (x_k-\gamma_k \nabla f_{i_k}(x_k)).
\]
As can be seen from the definition, no sum structure is assumed on the  function $g$ which is the one activated through the proximity operator. On the contrary, stochasticity arises from the computation of the gradient of $\sum_i f_i$ and variance reduction techniques can be exploited at this level. In this paper we tackle the same problem, in the special case where  $g=0$, but we analyze a different algorithm, where the functions $f_i$'s are activated through their proximity operator instead of their gradient. The addition of a regularizer $g$ can be considered, but at the moment differentiability is needed for our analysis. 
}
    
\end{remark}
\end{subsection}
\begin{subsection}{Assumptions}
The first assumptions are made on the functions $f_i:\HHs\to \R$, $i \in [n]$, as well as on the objective function $F:\HHs\to \R$.
\begin{assumptions}\label{ass:funtion} \
\begin{enumerate}[label=\rm(A.\roman*),leftmargin=*]
    \item \label{ass:a1} $\argmin F \neq \varnothing$.
    \item \label{ass:a11} For all $i \in [n]$, $f_i$ is convex and
     $L$-smooth, i.e., differentiable and such that
    \begin{equation*}
        (\forall \bxs,\bys \in \HHs) \quad \|\nabla f_i(\bxs) - \nabla f_i(\bys)\| \leq L\|\bxs - \bys\|
    \end{equation*}
    for some $L>0$.
    As a consequence, $F$ is convex and $L$-smooth.
    \item \label{ass:a12} $F$ satisfies the PL condition with constant $\mu>0$, i.e.,
\begin{equation}\label{PL condition}
    (\forall \bxs \in \HHs) \quad F(\bxs) - F_* \leq \frac{1}{2\mu} \|\nabla F(\bxs)\|^2,
\end{equation}
which is equivalent to the following quadratic growth condition when $F$ is convex
\begin{equation}\label{eq:mu_strongly_quasi_convex}
        (\forall \bxs \in \HHs) \quad\frac{\mu}{2}\dist(\bxs, \argmin F)^2 \leq F(\bxs) - F_*.
\end{equation}
\end{enumerate}
\end{assumptions}

\ref{ass:a1} and \ref{ass:a11} constitute the common assumptions that we use for all the convergence results presented in Sections \ref{sect:results} and \ref{sect:appli}. Assumption \ref{ass:a12} is often called \emph{Polyak-{\L}ojasiewicz condition} and was introduced in \cite{lojasiewicz1963propriete} (see also \cite{polyak1963gradient}) and is closely connected with the \emph{quadratic growth condition} \eqref{eq:mu_strongly_quasi_convex} (they are  equivalent in the convex setting, see e.g. \cite{bolte2017error}). Conditions \eqref{PL condition} and \eqref{eq:mu_strongly_quasi_convex} are both relaxations of the strong convexity property and are powerful key tools in establishing linear convergence for many iterative schemes, both in the convex \cite{karimi2016linear,bolte2017error,garrigos2022convergence,drusvyatskiy2018error} and the non-convex setting \cite{polyak1963gradient,polyak1964some,bolte2007lojasiewicz,begout2015damped,apidopoulos2022convergence}.

In the same fashion, in this work, Assumption \ref{ass:a12} will be used in order to deduce linear convergence rates in terms of objective function values for the sequence generated by Algorithm \ref{algo:unified}.
\begin{assumptions}\label{ass:variance}
Let, for all $k \in \N$, $\bv^k := \nabla f_{i_k}(\bx^{k}) - \bg^k$. Then
there exist non-negative real numbers $A, B, C \in \R_+$ and $\rho \in [0,1]$, and a non-positive real-valued random variable $D$ such that, for every $k \in \N$,
\begin{enumerate}[label=\rm(B.\roman*),leftmargin=*]
\item \label{ass:b1} $\EE[\bg^k\,\vert\, \Fk_k] = 0$\; a.s.,
    \item \label{ass:b2} $ \EE\left[\norm{\bv^k}^2 \,\vert\, \Fk_k\right] \le 2 A (F(\bx^k) - F_*) + B\sigma_k^2 + D$\; a.s.,
    \item \label{ass:b3} 
    $\EE\left[\sigma_{k+1}^2\right] \le (1-\rho) \EE\left[\sigma_k^2\right] + 2C\EE[F(\bx^k) - F_*]$,
\end{enumerate}
where $\sigma_k$ is a real-valued random variable,
$(\Fk_k)_{k \in \N}$ is a sequence of $\sigma$-algebras such that, $\forall k \in \N$, 
$\Fk_k \subset \Fk_{k+1} \subset \mathfrak{A}$,
$i_{k-1}$ and $x_k$ are $\Fk_k$-measurables, and 
$i_k$ is independent of $\Fk_k$.
\end{assumptions}
Assumption \ref{ass:b1} ensures that $\EE[\bv^k \,\vert\, \Fk_k] = \EE[\nabla f_{i_k} (\bx^k)\,\vert\, \Fk_k] = \nabla F(\bx^k)$, so that the  direction $\bv^k$ is an unbiased estimator of the full gradient of $F$ at $\bx^k$, which is a standard assumption in the related literature. Assumption \ref{ass:b2} on $\EE\left[\norm{\bv^k}^2 \,\vert\, \Fk_k\right]$ is the equivalent of what is called, in the literature \cite{khaled2020better, gorbunov2020unified}, the $ABC$ condition on $\EE\left[\norm{\nabla f_{i_k}(\bx^k)}^2 \,\vert\, \Fk_k\right]$ with  $\sigma_k = \|\nabla F(\bx^k)\|$ and $D$ constant (see also \cite{gorbunov2020unified}).
Assumption \ref{ass:b3} is justified by the fact that it is needed for the theoretical study and it is satisfied by many examples of variance reduction techniques. For additional discussion on these assumptions, especially Assumption \ref{ass:b3}, see \cite{gorbunov2020unified}. 
\end{subsection}
\end{section}
\begin{section}{Main results}\label{sect:results}
In the rest of the paper, we will always suppose that Assumption \ref{ass:a1} holds.

Before stating the main results of this work, we start with a technical proposition that constitutes the cornerstone of our analysis. The proof can be found in Appendix \ref{app:proofsect3}

\begin{proposition}\label{prop:unified}
Suppose that Assumptions \ref{ass:variance} and \ref{ass:a11} are verified and that the sequence
$(\bx^k)_{k \in \N}$ is generated by Algorithm \ref{algo:unified}. Let $M > 0$. Then, for all $k \in \N$,
\begin{align}
    \EE[\dist(\bx^{k+1}, \argmin F)^2] + \alpha^2M\EE[\sigma_{k+1}^2] &\leq 
     \EE[\dist(\bx^{k}, \argmin F)^2] 
     + \alpha^2 \left[M+B-\rho M\right]\EE[\sigma_k^2] \nonumber \\
     &\qquad -2 \alpha\left[1-\alpha (A+ MC)\right]\EE[F(\bx^{k})- F_*] \nonumber \\
     &\qquad + \alpha^2 \EE[D].  \nonumber
\end{align}
\end{proposition}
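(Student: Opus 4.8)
The plan is to first establish a one-step, pathwise inequality for the squared distance to an arbitrary minimizer, and only afterwards to take conditional and total expectations, feeding in Assumptions \ref{ass:b1}--\ref{ass:b3}. Fix any $\bx\opt \in \argmin F$ and set $\by^k := \bx^k + \alpha\bg^k$. From the optimality condition \eqref{eq:20240404a} one has $\bx^{k+1} - \by^k = -\alpha\nabla f_{i_k}(\bx^{k+1})$, and since $\bx^{k+1}$ minimises the $\tfrac1\alpha$-strongly convex map $\bx\mapsto f_{i_k}(\bx) + \tfrac{1}{2\alpha}\|\bx - \by^k\|^2$, testing this map against $\bx\opt$ yields
\begin{equation*}
\|\bx^{k+1}-\bx\opt\|^2 \le \|\by^k-\bx\opt\|^2 - \|\bx^{k+1}-\by^k\|^2 - 2\alpha\big(f_{i_k}(\bx^{k+1}) - f_{i_k}(\bx\opt)\big).
\end{equation*}

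Next I would eliminate the implicit quantities $f_{i_k}(\bx^{k+1})$ and $\nabla f_{i_k}(\bx^{k+1})$ in favour of the explicit, $\Fk_k$-measurable direction $\bv^k = \nabla f_{i_k}(\bx^k)-\bg^k$ of \eqref{defvk}. Convexity of $f_{i_k}$ (Assumption \ref{ass:a11}) gives $f_{i_k}(\bx^{k+1}) \ge f_{i_k}(\bx^k) + \langle \nabla f_{i_k}(\bx^k),\, \bx^{k+1}-\bx^k\rangle$; expanding $\|\by^k - \bx\opt\|^2$ and substituting $\bx^{k+1}-\by^k = -\alpha\nabla f_{i_k}(\bx^{k+1})$ together with $\bx^{k+1}-\bx^k = -\alpha\bw^k$ from \eqref{eq:20220915c}, all cross terms recombine and the leftover collapses to $-\alpha^2\|\nabla f_{i_k}(\bx^{k+1}) - \nabla f_{i_k}(\bx^k)\|^2 \le 0$, which I simply discard. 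The outcome is the deterministic per-step bound
\begin{equation*}
\|\bx^{k+1}-\bx\opt\|^2 \le \|\bx^{k}-\bx\opt\|^2 + 2\alpha\langle \bg^k,\, \bx^k-\bx\opt\rangle - 2\alpha\big(f_{i_k}(\bx^k) - f_{i_k}(\bx\opt)\big) + \alpha^2\|\bv^k\|^2 .
\end{equation*}
This step, trading the implicit gradient at $\bx^{k+1}$ for the explicit direction $\bv^k$ evaluated at $\bx^k$, is precisely the difficulty flagged in the Remark and is the crux of the argument; everything after it is bookkeeping.

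Then I would specialise $\bx\opt = P_{\argmin F}(\bx^k)$, which is $\Fk_k$-measurable, so that $\|\bx^k-\bx\opt\|^2 = \dist(\bx^k,\argmin F)^2$ while $\|\bx^{k+1}-\bx\opt\|^2 \ge \dist(\bx^{k+1},\argmin F)^2$, and apply $\EE[\,\cdot\mid\Fk_k]$. Because $\bx^k$ and $\bx\opt$ are $\Fk_k$-measurable and $i_k$ is independent of $\Fk_k$, Assumption \ref{ass:b1} annihilates the inner-product term ($\EE[\bg^k\mid\Fk_k]=0$), the identity $\EE[f_{i_k}(\bx^k)\mid\Fk_k]=F(\bx^k)$ (and likewise $\EE[f_{i_k}(\bx\opt)\mid\Fk_k]=F_*$) turns the function-value term into $-2\alpha(F(\bx^k)-F_*)$, and Assumption \ref{ass:b2} bounds $\EE[\|\bv^k\|^2\mid\Fk_k]$ by $2A(F(\bx^k)-F_*)+B\sigma_k^2+D$. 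Taking total expectations yields
\begin{equation*}
\EE[\dist(\bx^{k+1},\argmin F)^2] \le \EE[\dist(\bx^k,\argmin F)^2] - 2\alpha(1-\alpha A)\,\EE[F(\bx^k)-F_*] + \alpha^2 B\,\EE[\sigma_k^2] + \alpha^2\EE[D].
\end{equation*}

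Finally I would add $\alpha^2 M\,\EE[\sigma_{k+1}^2]$ to both sides and bound it through Assumption \ref{ass:b3} by $\alpha^2 M(1-\rho)\EE[\sigma_k^2] + 2\alpha^2 MC\,\EE[F(\bx^k)-F_*]$. Collecting the coefficient of $\EE[\sigma_k^2]$, which becomes $\alpha^2(M+B-\rho M)$, and of $\EE[F(\bx^k)-F_*]$, which becomes $-2\alpha\big(1-\alpha(A+MC)\big)$, produces exactly the claimed inequality. Note that beyond differentiability only convexity of the $f_i$ is used here; the $L$-smoothness in Assumption \ref{ass:a11} is not needed for this proposition and enters only when verifying Assumptions \ref{ass:b2}--\ref{ass:b3} for the concrete schemes of Section \ref{sect:appli}.
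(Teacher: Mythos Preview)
Your proof is correct and follows essentially the same approach as the paper: both derive the same one-step deterministic bound $\|\bx^{k+1}-\bxs\|^2 \le \|\bx^{k}-\bxs\|^2 + 2\alpha\langle \bg^k, \bx^k-\bxs\rangle - 2\alpha(f_{i_k}(\bx^k)-f_{i_k}(\bxs)) + \alpha^2\|\bv^k\|^2$ by combining the prox optimality condition with convexity of $f_{i_k}$ and discarding the nonnegative term $\alpha^2\|\nabla f_{i_k}(\bx^{k+1})-\nabla f_{i_k}(\bx^k)\|^2$, then specialise $\bxs = P_{\argmin F}(\bx^k)$, take conditional and total expectations, and combine with \ref{ass:b1}--\ref{ass:b3}. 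The only cosmetic difference is that the paper works with the tilted function $R_{i_k}(\bx)=f_{i_k}(\bx)-\langle\bg^k,\bx-\bx^k\rangle$ and the three-point identity, whereas you work directly with the shifted point $\by^k=\bx^k+\alpha\bg^k$ and the strong-convexity inequality for the prox objective; your closing remark that only convexity (not $L$-smoothness) is used is also consistent with the paper's argument.
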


We now state two theorems that can be derived from the previous proposition. The first theorem deals with cases where the function $F$ is only convex.

\begin{theorem}\label{theo:unifiedmainonlyconvex}
Suppose that Assumptions \ref{ass:a11} and \ref{ass:variance} hold with $\rho > 0$ and that the sequence $(\bx^k)_{k \in \N}$ is generated by Algorithm \ref{algo:unified}. Let $M>0$ and $\alpha>0$ be such that $M \geq B/\rho$ and $\alpha < 1/(A+MC)$. Then, for all $k \in \N$,
\begin{align}
    \EE[F(\bbar{\bx}^{k})- F_*] &\leq \frac{\dist(\bx^{0}, \argmin F)^{2} + \alpha^2M \EE[\sigma_{0}^2]}{2 \alpha k\left[1-\alpha (A+ MC)\right]} 
     \nonumber,
\end{align}
with $\bbar{\bx}^{k} = \frac{1}{k}\sum_{t=0}^{k-1}\bx^t$.
\end{theorem}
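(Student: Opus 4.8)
The plan is to turn the one-step estimate of Proposition~\ref{prop:unified} into a telescoping bound for a Lyapunov functional and then pass to the ergodic average by convexity. I would introduce
\[
V_k := \EE[\dist(\bx^{k}, \argmin F)^2] + \alpha^2 M\, \EE[\sigma_k^2],
\]
which is nonnegative since both summands are. The right-hand side of the proposition already displays $\EE[\dist(\bx^{k},\argmin F)^2]$; the only thing preventing me from reading off $V_k$ there is that the coefficient of $\EE[\sigma_k^2]$ is $\alpha^2[M+B-\rho M]$ rather than $\alpha^2 M$. This is exactly where the hypothesis $M\geq B/\rho$ enters: it gives $\rho M\geq B$, hence $M+B-\rho M\leq M$, and since $\EE[\sigma_k^2]\geq 0$ I may upper bound that term by $\alpha^2 M\,\EE[\sigma_k^2]$, producing $V_k$ on the right.

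After this substitution the proposition becomes
\[
V_{k+1} \leq V_k - 2\alpha\bigl[1-\alpha(A+MC)\bigr]\EE[F(\bx^{k})-F_*] + \alpha^2\EE[D].
\]
Next I would use that $D$ is a nonpositive random variable, so $\EE[D]\leq 0$ and the last term may be discarded, and abbreviate $\beta := 2\alpha[1-\alpha(A+MC)]$, which is strictly positive precisely because $\alpha < 1/(A+MC)$. Rearranging gives $\beta\,\EE[F(\bx^k)-F_*]\leq V_k-V_{k+1}$; summing over $t=0,\dots,k-1$ and using $V_k\geq 0$ yields
\[
\beta\sum_{t=0}^{k-1}\EE[F(\bx^t)-F_*] \leq V_0 - V_k \leq V_0.
\]

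Finally, convexity of $F$ from Assumption~\ref{ass:a11} and Jensen's inequality give $F(\bbar{\bx}^{k})\leq \frac{1}{k}\sum_{t=0}^{k-1}F(\bx^t)$, so taking expectations and dividing by $k$ converts the summed bound into $\EE[F(\bbar{\bx}^{k})-F_*]\leq V_0/(\beta k)$. Because $\bx^0$ is deterministic, $\EE[\dist(\bx^0,\argmin F)^2]=\dist(\bx^0,\argmin F)^2$, so $V_0$ coincides with the numerator of the claim, and substituting the value of $\beta$ reproduces the stated rate verbatim. I do not anticipate a genuine obstacle: the argument is a routine Lyapunov–telescoping–Jensen scheme, and the only points that demand care are the correct use of $M\geq B/\rho$ to absorb the $\sigma_k^2$ term and of $D\leq 0$ to drop the noise floor—both of which are exactly what the hypotheses are engineered to provide.
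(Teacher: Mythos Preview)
Your proposal is correct and follows essentially the same approach as the paper's proof: both use $M\geq B/\rho$ to absorb the $\sigma_k^2$ coefficient, drop the $D$ term by nonpositivity, telescope the resulting inequality in the Lyapunov quantity $V_k=\EE[\dist(\bx^k,\argmin F)^2]+\alpha^2 M\,\EE[\sigma_k^2]$, and conclude via Jensen's inequality and convexity of $F$. The only cosmetic difference is that you name $V_k$ and $\beta$ explicitly, while the paper leaves them implicit.
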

\begin{proof}
Since $B - \rho M \leq 0$ and $\EE[D] \leq 0$, it follows from Proposition \ref{prop:unified} that
\begin{align}
    2 \alpha\left[1-\alpha (A+ MC)\right]\EE[F(\bx^{k})- F_*] &\leq \EE[\dist(\bx^{k}, \argmin F)^2] + \alpha^2M\EE[\sigma_{k}^2] \nonumber \\
    &\qquad - \left(\EE[\dist(\bx^{k+1}, \argmin F)^2] + \alpha^2M\EE[\sigma_{k+1}]\right) \nonumber.
\end{align}
Summing from $0$ up to $k-1$ and dividing both sides by $k$, we obtain
\begin{align}
    2 \alpha\left[1-\alpha (A+ MC)\right]\sum_{t=0}^{k-1}\frac{1}{k}\EE[F(\bx^{t})- F_*] &\leq \frac{1}{k}\left(\dist(\bx^{0}, \argmin F)^{2} + \alpha^2M\EE[\sigma_{0}^2]\right) \nonumber \\
    &\qquad - \frac{1}{k}\left(\EE[\dist(\bx^{k}, \argmin F)]^{2} + \alpha^2M\EE[\sigma_{k}^2]\right) \nonumber \\
    &\leq \frac{1}{k}\left(\dist(\bx^{0}, \argmin F)^{2} + \alpha^2M\EE[\sigma_{0}^2]\right). \nonumber
\end{align}
Finally, by convexity of $F$, we get
\begin{align}
    \EE[F(\bbar{\bx}^{k})- F_*] &\leq \frac{\dist(\bx^{0}, \argmin F)^{2} + \alpha^2M \EE[\sigma_{0}^2]}{2 \alpha k\left[1-\alpha (A+ MC)\right]} 
    \nonumber.
    \qedhere
\end{align}
\end{proof}

The next theorem shows that, when $F$ additionally satisfies the PL property \eqref{PL condition},  the sequence generated by algorithm \ref{algo:unified} exhibits a linear convergence rate both in terms of the distance to a minimizer and also of the values of the objective function. 

\begin{theorem}\label{theo:unifiedmain}
Suppose that Assumptions \ref{ass:funtion} and \ref{ass:variance} are verified with $\rho >0$ and that the sequence $(\bx^k)_{k \in \N}$ is generated by Algorithm \ref{algo:unified}. Let $M$ be such that $M > B/\rho$ and $\alpha > 0$ such that $\alpha < 1 / ( A+MC)$. Set $ q \coloneqq \max\left\{1 - \alpha\mu\left(1-\alpha (A+ MC)\right), 1+\frac{B}{M}-\rho\right\}$. Then $q \in ]0,1[$ and for all $k \in \N$,
\begin{equation*}
    V^{k+1} \leq q V^k, 
\end{equation*}
with $V^k = \EE[\dist(\bx^{k}, \argmin F)^2] + \alpha^2M\EE[\sigma_{k}^2]\,$ for all $k \in \N$. Moreover, for every $k \in \N$, 
\begin{align*}
    \EE[\dist(\bx^{k}, \argmin F)^2] &\leq  q^{k} \left(\dist(\bx^{0}, \argmin F)^{2} + \alpha^2M\EE[\sigma_{0}^2]\right) ,
    \\[1ex]
    \EE[F(\bx^k)- F_*] &\leq \frac{q^k L}{2} \left(\dist(\bx^{0}, \argmin F)^{2} + \alpha^2M\EE[\sigma_{0}^2]\right).
\end{align*}
\end{theorem}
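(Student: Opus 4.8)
The plan is to turn the one-step estimate of Proposition~\ref{prop:unified} into a genuine contraction for the Lyapunov quantity $V^k = \EE[\dist(\bx^{k},\argmin F)^2] + \alpha^2 M\,\EE[\sigma_{k}^2]$. Starting from Proposition~\ref{prop:unified}, I would first discard the summand $\alpha^2\EE[D]$, which is $\le 0$ since $D$ is non-positive. What remains is the distance term, the objective-value term carrying the coefficient $-2\alpha[1-\alpha(A+MC)]$, and the $\sigma_k^2$ term carrying the coefficient $\alpha^2(M+B-\rho M)$.

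The decisive step is to absorb the objective-value term into the distance term via the quadratic growth inequality \eqref{eq:mu_strongly_quasi_convex}, available because $F$ is convex (Assumption~\ref{ass:a11}) and satisfies PL (Assumption~\ref{ass:a12}). Here one must track signs carefully: since $\alpha < 1/(A+MC)$ gives $1-\alpha(A+MC) > 0$, the coefficient $-2\alpha[1-\alpha(A+MC)]$ is strictly negative, so substituting the lower bound $F(\bx^k)-F_* \ge \tfrac{\mu}{2}\dist(\bx^k,\argmin F)^2$ produces the upper bound $-\alpha\mu[1-\alpha(A+MC)]\,\EE[\dist(\bx^k,\argmin F)^2]$. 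Collecting terms then gives
\begin{equation*}
V^{k+1} \le q_1\,\EE[\dist(\bx^{k},\argmin F)^2] + q_2\,\alpha^2 M\,\EE[\sigma_k^2],
\end{equation*}
with $q_1 = 1-\alpha\mu[1-\alpha(A+MC)]$ and $q_2 = 1 + B/M - \rho$, the latter obtained by factoring $\alpha^2 M$ out of $\alpha^2(M+B-\rho M)$. Because $\EE[\dist(\bx^k,\argmin F)^2]\ge 0$ and $\EE[\sigma_k^2]\ge 0$, replacing both coefficients by $q = \max\{q_1,q_2\}$ yields $V^{k+1}\le q V^k$, irrespective of the individual signs of $q_1$ and $q_2$.

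It remains to check $q\in(0,1)$, which is a short computation from the parameter constraints: $q_2 < 1$ because $M > B/\rho$ forces $B/M < \rho$, while $q_1 < 1$ because $\alpha\mu[1-\alpha(A+MC)] > 0$; for positivity note $q \ge q_2 \ge 1-\rho \ge 0$, the strict inequalities in the hypotheses ruling out $q=0$. Iterating the contraction gives $V^k \le q^k V^0 = q^k\big(\dist(\bx^0,\argmin F)^2 + \alpha^2 M\,\EE[\sigma_0^2]\big)$, using that $\bx^0$ is deterministic. The distance estimate is then immediate from $V^k \ge \EE[\dist(\bx^k,\argmin F)^2]$.

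For the function-value estimate I would invoke the descent lemma for the $L$-smooth function $F$ at the nearest minimizer $\bx^* = P_{\argmin F}(\bx^k)$, where $\nabla F(\bx^*)=0$, obtaining $F(\bx^k)-F_* \le \tfrac{L}{2}\|\bx^k-\bx^*\|^2 = \tfrac{L}{2}\dist(\bx^k,\argmin F)^2$; taking expectations and combining with the distance bound delivers the stated rate. The main obstacle is the sign bookkeeping in the application of quadratic growth (which hinges on $\alpha<1/(A+MC)$) and the careful verification that $q\in(0,1)$; once these are in place, the two final estimates follow by a routine induction and the descent lemma.
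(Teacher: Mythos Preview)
Your proposal is correct and follows essentially the same approach as the paper's proof: apply Proposition~\ref{prop:unified}, drop the non-positive $\alpha^2\EE[D]$ term, use the quadratic growth condition~\eqref{eq:mu_strongly_quasi_convex} to convert the $F(\bx^k)-F_*$ term into a distance term, take the maximum of the two resulting contraction factors, iterate, and finish with the descent lemma. The only cosmetic difference is that the paper checks $q\in\,]0,1[$ via the chain $0<1-\rho\le 1+B/M-\rho<1$ rather than via your separate arguments for $q_1<1$ and $q_2<1$.
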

\begin{proof}
Since $\alpha < \frac{1}{A+MC}$, we obtain thanks to Assumption \ref{ass:a12} and Proposition \ref{prop:unified}
\begin{align}\label{eq:20220927e}
    \EE[\dist(\bx^{k+1}, \argmin F)^2] + \alpha^2M\EE[\sigma_{k+1}^2] &\leq \left[1 - \alpha\mu\left(1-\alpha (A+ MC)\right)\right]\EE[\dist(\bx^{k}, \argmin F)^2] \nonumber \\
    &\qquad + \alpha^2 M\left[1+\frac{B}{M}-\rho\right]\EE[\sigma_k^2].
\end{align}
Since $\alpha < \frac{1}{A+MC}$ and $M > \frac{B}{\rho}$, we obtain from \eqref{eq:20220927e} that
\begin{align}
    &\EE[\dist(\bx^{k+1}, \argmin F)^2] + \alpha^2M\EE[\sigma_{k+1}^2] \nonumber \\
    &\leq \max\left\{1 - \alpha\mu\left(1-\alpha (A+ MC)\right), 1+\frac{B}{M}-\rho\right\} \left(\EE[\dist(\bx^{k}, \argmin F)^2] + \alpha^2M\EE[\sigma_{k}^2]\right) \nonumber.
\end{align}
Since $0<1-\rho \leq 1+B/M -\rho <1$, it is clear that $q \in \left]0,1\right[$. Iterating down on $k$, we obtain
\begin{equation}\label{eq:20230313a}
    \EE[\dist(\bx^{k}, \argmin F)^2] \leq  q^{k} \left(\dist(\bx^{0}, \argmin F)^{2} + \alpha^2M\EE[\sigma_{0}^2]\right).  
\end{equation}
Let $\bxs^* \in \argmin F$. As $F$ is $L$-Lipschitz smooth, from the Descent Lemma \cite[Lemma 1.2.3]{nesterov2003introductory}, we have
\begin{equation}
    (\forall \bxs \in \HHs) \quad F(\bxs) - F_* \leq \frac{L}{2}\|\bxs - \bxs^*\|^2\nonumber.
\end{equation}
In particular,
\begin{equation}\label{eq:20230313b}
    (\forall \bxs \in \HHs) \quad F(\bxs) - F_* \leq \frac{L}{2}\dist(\bxs, \argmin F)^2.
\end{equation}
Using \eqref{eq:20230313b} with $\bx^k$ in \eqref{eq:20230313a}, we get
\begin{equation*}
    \EE[F(\bx^k)- F_*] \leq \frac{q^k L}{2} \left(\dist(\bx^{0}, \argmin F)^{2} + \alpha^2M\EE[\sigma_{0}^2]\right).
    \qedhere
\end{equation*}
\end{proof}
\begin{remark}\
\begin{enumerate}[label={\rm (\roman*)}]
    \item The convergence rate of order $\grandO{\frac{1}{k}}$ for the general variance reduction scheme \ref{algo:unified}, with constant step size, as stated in Theorem \ref{theo:unifiedmainonlyconvex}, is an improved extension of the one found for the vanilla stochastic proximal gradient method (see e.g. \cite[Proposition $3.8$]{asi2019stochastic}). It is important to mention that the convergence with a constant step size is no longer true when $D$ in Assumption \ref{ass:b2} is positive. As we shall see in Section \ref{sect:appli} several choices for the variance term $\bg_{k}$ in Algorithm \ref{algo:unified} can be beneficial regarding this issue, provided Assumption \ref{ass:b2} with $D\leq 0$. 
    \item The linear rates as stated in Theorem \ref{theo:unifiedmain} have some similarity with the ones found in \cite[Theorem $4.1$]{gorbunov2020unified}, where the authors present a unified study for variance-reduced stochastic (explicit) gradient methods. However we note that the Polyak-{\L}ojasiewicz condition \eqref{PL condition} on $F$ used here is slightly weaker than the quasi-strong convexity used in \cite[Assumption $4.2$]{gorbunov2020unified}.
\end{enumerate}
\end{remark}
\end{section}
\begin{section}{Derivation of stochastic proximal point type algorithms} \label{sect:appli}
In this section, we provide and analyze several instances of the general scheme \ref{algo:unified}, corresponding to different choices of the variance reduction term $\bg^{k}$. In particular in the next paragraphs we describe four different schemes, namely Stochastic Proximal Point Algorithm (SPPA), Stochastic Variance-reduced Proximal (SVRP) algorithm, Loopless SVRP (L-SVRP) and Stochastic Average Proximal Algorithm (SAPA).

\subsection{Stochastic Proximal Point Algorithm}\label{sect:sppa}
We start by presenting the classic vanilla stochastic proximal method (SPPA), see e.g. 
\cite{bertsekas2011incremental,bianchi2016ergodic,patrascu2017nonasymptotic}. We suppose that
Assumptions \ref{ass:a1} and \ref{ass:a11} hold and that for all $k \in \N$
\begin{equation}
\label{eq:20230328b}
    \sigma^2_k := \sup_{\bxs^* \in \argmin F} \frac 1 n \sum_{i=1}^n \|\nabla f_i(\bxs^*)\|^2<+\infty.
\end{equation}
\begin{algorithm}[\bf{SPPA}] \label{algo:sppa}
Let $(i_k)_{k \in \N}$ be a sequence of i.i.d.~random variables uniformly distributed on $\{1,\dots, n\}$. Let $\alpha_k>0$ for all $k \in \N$ and
set the initial point $\bx^0 \in \HHs$. Define
\begin{equation}\label{eq:20230217e}
\begin{array}{l}
\text{for}\;k=0,1,\ldots\\
\left\lfloor
\begin{array}{l}
\bx^{k+1} = \prox_{\alpha_k f_{i_{k}}}(\bx^k).
\end{array}
\right.
\end{array} 
\end{equation}
\end{algorithm}
Algorithm \ref{algo:sppa} can be directly identified with the general scheme \ref{algo:unified}, by setting $\bg^k = 0$ and $\bv^k = \nabla f_{i_k} (\bx^{k})$. The following lemma provides a bound in expectation on the sequence $\|\bv^{k}\|$ and can be found in the related literature; see, e.g. \cite[Lemma 1]{sebbouh2021almost}.
\begin{lemma}
\label{lem:20230217a}
We suppose that Assumption  \ref{ass:a11} holds and that $(\bx^k)_{k \in \N}$ is a sequence generated by Algorithm \ref{algo:sppa} and $\bv^k = \nabla f_{i_k} (\bx^{k})$. Then, for all $k\in \N$, it holds
\begin{align*}
    &\EE [\| \bv^{k}\|^{2}\,\vert\, \Fk_k] \leq 4 L\big( F(\bx^{k})-F_*\big) + 2 \sigma^2_k, \\
    &\EE\left[\sigma_{k+1}^2\right] = \sigma_{k+1}^2 = \sigma_{k}^2 = \EE\left[\sigma_k^2\right], \nonumber
\end{align*}
where $\Fk_k = \sigma(i_0, \dots, i_{k-1})$ and $\displaystyle \sigma^2_k$ is defined as a constant random variable in \eqref{eq:20230328b}.
\end{lemma}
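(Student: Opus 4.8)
The plan is to prove the two claims of Lemma~\ref{lem:20230217a} separately, since the second one is essentially immediate and the first is the substantive part. For the variance bound on $\EE[\|\bv^k\|^2\,\vert\,\Fk_k]$, recall that $\bv^k = \nabla f_{i_k}(\bx^k)$ and that, since $\bx^k$ is $\Fk_k$-measurable and $i_k$ is independent of $\Fk_k$, the conditional expectation over $i_k$ amounts to averaging over $i \in [n]$ uniformly. First I would fix an arbitrary $\bxs^* \in \argmin F$ and insert it via the standard splitting
\begin{equation*}
\|\nabla f_i(\bx^k)\|^2 \leq 2\|\nabla f_i(\bx^k) - \nabla f_i(\bxs^*)\|^2 + 2\|\nabla f_i(\bxs^*)\|^2,
\end{equation*}
then take the conditional expectation, i.e.\ the average $\frac{1}{n}\sum_{i=1}^n$, of both sides. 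The second term averages to exactly $2\sigma_k^2$ after taking the supremum over $\bxs^* \in \argmin F$ built into the definition \eqref{eq:20230328b}.

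The crux is to control the first term $\frac{2}{n}\sum_{i=1}^n\|\nabla f_i(\bx^k) - \nabla f_i(\bxs^*)\|^2$ by $4L(F(\bx^k) - F_*)$. Here I would invoke the standard cocoercivity-type inequality for each convex $L$-smooth $f_i$: since $\bxs^*$ minimizes $F$ but \emph{not} necessarily each $f_i$, I cannot simply use $\nabla f_i(\bxs^*)=0$. Instead the right tool is the consequence of convexity and $L$-smoothness (see e.g.\ the proof of cocoercivity, Nesterov's book) that for each $i$,
\begin{equation*}
\tfrac{1}{2L}\|\nabla f_i(\bx^k) - \nabla f_i(\bxs^*)\|^2 \leq f_i(\bx^k) - f_i(\bxs^*) - \langle \nabla f_i(\bxs^*), \bx^k - \bxs^*\rangle.
\end{equation*}
Averaging over $i$, the linear term collapses because $\frac{1}{n}\sum_i \nabla f_i(\bxs^*) = \nabla F(\bxs^*) = 0$ (as $\bxs^* \in \argmin F$ and $F$ is smooth), leaving $\frac{1}{2L}\cdot\frac{1}{n}\sum_i\|\nabla f_i(\bx^k)-\nabla f_i(\bxs^*)\|^2 \leq F(\bx^k) - F_*$. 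Multiplying by $2L$ and then by the factor $2$ from the splitting yields precisely $4L(F(\bx^k)-F_*)$, and combining with the $2\sigma_k^2$ term gives the claimed bound.

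The main obstacle is recognizing that $\bxs^*$ is a minimizer of $F$ only, so one must route through the per-function inequality whose linear terms cancel \emph{in aggregate} after averaging, rather than term by term; this is exactly why the bound involves $F(\bx^k)-F_*$ and the averaged gradient-noise constant $\sigma_k^2$ rather than individual $f_i$ quantities. For the second claim, since $\sigma_k^2$ is defined in \eqref{eq:20230328b} as a deterministic constant independent of $k$ (it depends only on the fixed functions $f_i$ and the fixed set $\argmin F$), the chain of equalities $\EE[\sigma_{k+1}^2] = \sigma_{k+1}^2 = \sigma_k^2 = \EE[\sigma_k^2]$ is immediate, and I would simply note that the constant random variable is unchanged across iterations. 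This also confirms that Assumption~\ref{ass:b3} holds trivially with $\rho = 0$ and $C = 0$, consistent with identifying SPPA as an instance of the general scheme.
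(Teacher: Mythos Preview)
Your proposal is correct and follows essentially the same route the paper uses throughout: split $\|\nabla f_i(\bx^k)\|^2$ via the triangle-type inequality, then control $\frac{1}{n}\sum_i\|\nabla f_i(\bx^k)-\nabla f_i(\bxs^*)\|^2$ by $2L(F(\bx^k)-F_*)$ using the per-function cocoercivity inequality $\frac{1}{2L}\|\nabla f_i(\bxs)-\nabla f_i(\bys)\|^2 \le f_i(\bxs)-f_i(\bys)-\langle\nabla f_i(\bys),\bxs-\bys\rangle$ and the cancellation of the linear terms after averaging (this is exactly the paper's inequality \eqref{eq:20220927b}). The paper does not spell out a proof of this particular lemma, citing instead the literature, but the template it uses for the analogous Lemmas~\ref{lem:20220927a}, \ref{lem:20221103a}, and \ref{lem:20221004a} is the same as yours; one small wording point: for a fixed $\bxs^*$ the second averaged term is \emph{bounded} by $2\sigma_k^2$ (since $\sigma_k^2$ is defined as a supremum), not exactly equal to it.
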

From Lemma \ref{lem:20230217a}, we immediately notice that Assumptions \ref{ass:variance} are verified with $A = 2L, B =2$,  $C = \rho = 0$ and $D \equiv 0$. In this setting, we are able to recover the following convergence result (see also \cite[Lemma $3.10$ and Proposition $3.8$]{asi2019stochastic}).
\begin{theorem}
    Suppose that Assumption \ref{ass:a11} holds and let $(\gamma_k)_{k \in \N}$ be a positive real-valued sequence such that $\alpha_k \leq \frac{1}{4L}$, for all $k \in \N$. Suppose also that the sequence $(\bx^k)_{k \in \N}$ is generated by Algorithm \ref{algo:sppa} with the sequence $(\alpha_k)_{k \in \N}$. Then, $\forall k \geq 1$,
\begin{equation}
    \EE[F(\bbar{\bx}^{k})- F_*] \leq \frac{\dist(\bx^{0}, \argmin F)^{2}}{\sum_{t=0}^{k-1}\alpha_t} + 2 \sigma^2_1\frac{\sum_{t=0}^{k-1}\alpha_t^2}{\sum_{t=0}^{k-1}\alpha_t}, \nonumber
\end{equation}
where $\displaystyle \bbar{\bx}^{k} = \sum_{t=0}^{k-1}\frac{\alpha_t}{\sum_{t=0}^{k-1}\alpha_t} \bx^t$.
\end{theorem}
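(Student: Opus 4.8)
The plan is to establish a one-step energy inequality and then telescope it, following the same strategy as Proposition~\ref{prop:unified} but carrying the step-dependent stepsize $\alpha_t$ explicitly; note that Theorem~\ref{theo:unifiedmainonlyconvex} cannot be cited directly, since it requires $\rho>0$ whereas Lemma~\ref{lem:20230217a} only gives $\rho=0$. I would fix an arbitrary minimizer $\bxs^* \in \argmin F$ and use that $\bx^{t+1} = \prox_{\alpha_t f_{i_t}}(\bx^t)$ minimizes the $(1/\alpha_t)$-strongly convex map $f_{i_t}(\cdot) + \tfrac{1}{2\alpha_t}\|\cdot - \bx^t\|^2$. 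Evaluating the strong-convexity inequality at $\bxs^*$ yields the three-point estimate
\[
\|\bx^{t+1} - \bxs^*\|^2 \leq \|\bx^t - \bxs^*\|^2 - 2\alpha_t\big(f_{i_t}(\bx^{t+1}) - f_{i_t}(\bxs^*)\big) - \|\bx^{t+1} - \bx^t\|^2.
\]

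The main obstacle is the term $f_{i_t}(\bx^{t+1})$: since $\bx^{t+1}$ depends on $i_t$, one cannot take conditional expectation and recover $F$, exactly the difficulty flagged in the Remark after Algorithm~\ref{algo:unified}. I would resolve it by transferring everything back to $\bx^t$, which is $\Fk_t$-measurable and hence independent of $i_t$. Splitting $f_{i_t}(\bx^{t+1}) - f_{i_t}(\bxs^*) = \big[f_{i_t}(\bx^{t+1}) - f_{i_t}(\bx^t)\big] + \big[f_{i_t}(\bx^t) - f_{i_t}(\bxs^*)\big]$, I would lower-bound the first bracket by convexity, $f_{i_t}(\bx^{t+1}) - f_{i_t}(\bx^t) \geq \langle \nabla f_{i_t}(\bx^t), \bx^{t+1} - \bx^t\rangle$, then substitute the optimality identity $\bx^{t+1} - \bx^t = -\alpha_t \nabla f_{i_t}(\bx^{t+1})$ and apply Young's inequality to the cross term, whose $\nabla f_{i_t}(\bx^{t+1})$ contribution cancels against $-\|\bx^{t+1}-\bx^t\|^2$. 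What survives is only $\bv^t = \nabla f_{i_t}(\bx^t)$, leaving the clean bound
\[
\|\bx^{t+1} - \bxs^*\|^2 \leq \|\bx^t - \bxs^*\|^2 - 2\alpha_t\big(f_{i_t}(\bx^t) - f_{i_t}(\bxs^*)\big) + \alpha_t^2\|\bv^t\|^2.
\]

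Next I would take the conditional expectation given $\Fk_t$. Because $\bx^t$ is $\Fk_t$-measurable and $i_t$ is independent of $\Fk_t$, one has $\EE[f_{i_t}(\bx^t) - f_{i_t}(\bxs^*)\,\vert\,\Fk_t] = F(\bx^t) - F_*$, while Lemma~\ref{lem:20230217a} gives $\EE[\|\bv^t\|^2\,\vert\,\Fk_t] \leq 4L(F(\bx^t) - F_*) + 2\sigma_t^2$. Collecting terms produces $\EE[\|\bx^{t+1} - \bxs^*\|^2\,\vert\,\Fk_t] \leq \|\bx^t - \bxs^*\|^2 - 2\alpha_t(1 - 2L\alpha_t)(F(\bx^t) - F_*) + 2\alpha_t^2\sigma_t^2$. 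The stepsize condition $\alpha_t \leq 1/(4L)$ forces $1 - 2L\alpha_t \geq 1/2$, so the coefficient of the nonnegative term $F(\bx^t) - F_*$ is at most $-\alpha_t$, giving $\alpha_t\EE[F(\bx^t) - F_*] \leq \|\bx^t - \bxs^*\|^2 - \EE[\|\bx^{t+1} - \bxs^*\|^2\,\vert\,\Fk_t] + 2\alpha_t^2\sigma_t^2$.

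Finally I would take total expectations, use that Lemma~\ref{lem:20230217a} makes $\sigma_t^2 = \sigma_1^2$ constant, and sum over $t = 0, \dots, k-1$; the distance terms telescope, the nonpositive final distance term is discarded, and choosing $\bxs^* = P_{\argmin F}(\bx^0)$ converts $\|\bx^0 - \bxs^*\|^2$ into $\dist(\bx^0, \argmin F)^2$, yielding $\sum_{t=0}^{k-1}\alpha_t\EE[F(\bx^t) - F_*] \leq \dist(\bx^0, \argmin F)^2 + 2\sigma_1^2\sum_{t=0}^{k-1}\alpha_t^2$. Dividing by $\sum_{t=0}^{k-1}\alpha_t$ and invoking Jensen's inequality for the convex $F$ at the weighted average $\bbar{\bx}^k$ defined in the statement gives the claimed estimate. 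The only genuinely delicate step is the cancellation in the second paragraph that eliminates the $i_t$-dependent point $\bx^{t+1}$ from the function-value term; everything else is telescoping and convexity.
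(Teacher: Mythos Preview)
Your proposal is correct and follows essentially the same route as the paper: you inline the derivation of the one-step inequality that the paper packages as Proposition~\ref{prop:unified} (specialized to $\bg^k=0$), then telescope and apply Jensen exactly as the paper does. The only cosmetic difference is that you track $\|\bx^t-\bxs^*\|^2$ for a fixed $\bxs^*=P_{\argmin F}(\bx^0)$ while the paper telescopes $\dist(\bx^t,\argmin F)^2$; both yield the same final bound.
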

\begin{proof}
From Proposition \ref{prop:unified} adapted to the update \eqref{eq:20230217e}, it follows that
\begin{equation}\label{eq:20230217g}
    2 \alpha_k\left[1-2\alpha_k L\right]\EE[F(\bx^{k})- F_*] \leq \EE[\dist(\bx^{k}, \argmin F)]^{2}  - \EE[\dist(\bx^{k+1}, \argmin F)]^{2}  + 2\alpha_k^2 \sigma^2_k. 
\end{equation}
Since $\alpha_k \leq 1/ 4L$, for all $k \in \N$, we have from \eqref{eq:20230217g},
\begin{align}
    \alpha_k\EE[F(\bx^{k})- F_*] &\leq \EE[\dist(\bx^{k}, \argmin F)]^{2}  - \EE[\dist(\bx^{k+1}, \argmin F)]^{2}  + 2\alpha_k^2 \sigma^2_k \nonumber \\
    &= \EE[\dist(\bx^{k}, \argmin F)]^{2}  - \EE[\dist(\bx^{k+1}, \argmin F)]^{2}  + 2\alpha_k^2 \sigma^2_1. \nonumber
\end{align}
Let $k \geq 1$. Summing from $0$ up to $k-1$ and dividing both side by $\sum_{t=0}^{k-1}\alpha_t$, we obtain
\begin{align}
    \sum_{t=0}^{k-1}\frac{\alpha_t}{\sum_{t=0}^{k-1}\alpha_t}\EE[F(\bx^{t})- F_*] &\leq \frac{1}{\sum_{t=0}^{k-1}\alpha_t} \left(\dist(\bx^{0}, \argmin F)^{2}  - \EE[\dist(\bx^{k}, \argmin F)]^{2}\right) \nonumber \\
    &\qquad + 2 \sigma^2_1 \frac{\sum_{t=0}^{k-1}\alpha_t^2}{\sum_{t=0}^{k-1}\alpha_t} \nonumber \\
    &\leq \frac{\dist(\bx^{0}, \argmin F)^{2}}{\sum_{t=0}^{k-1}\alpha_t} + 2 \sigma^2_1\frac{\sum_{t=0}^{k-1}\alpha_t^2}{\sum_{t=0}^{k-1}\alpha_t}. \nonumber
\end{align}
Finally by convexity of $F$ and using Jensen's inequality, we obtain
\begin{equation*}
    \EE[F(\bbar{\bx}^{k})- F_*] \leq \frac{\dist(\bx^{0}, \argmin F)^{2}}{\sum_{t=0}^{k-1}\alpha_t} + 2 \sigma^2_1 \frac{\sum_{t=0}^{k-1}\alpha_t^2}{\sum_{t=0}^{k-1}\alpha_t}.
    \qedhere
\end{equation*}
\end{proof}
\subsection{Stochastic Variance-Reduced Proximal point algorithm}\label{sect:svrg}
In this paragraph, we present a Stochastic Proximal Point Algorithm coupled with a variance reduction term, in the spirit of the Stochastic Variance Reduction Gradient (SVRG) method introduced in \cite{johnson2013accelerating}. It is coined \emph{Stochastic Variance-Reduced Proximal point algorithm} (SVRP).

The SVRP method involves two levels of iterative procedure: outer iterations and inner iterations. We shall stress out that the framework presented in the previous section covers only the inner iteration procedure and thus the convergence analysis for SVRP demands an additional care. In contrast to the subsequent schemes, Theorems \ref{theo:unifiedmainonlyconvex} and \ref{theo:unifiedmain} do not apply directly to SVRP. In particular, as it can be noted below, in the case of SVRP, the constant $\rho$ appearing in \ref{ass:b3} in Assumption \ref{ass:variance}, is null. Nevertheless, it is worth mentioning that the convergence analysis still uses Proposition \ref{prop:unified}.

\begin{algorithm}[\bf{SVRP}]
\label{algo:srvprox2}
Let $m\in \N$, with $m\geq 1$, and $(\xi_s)_{s \in \N}$, $(i_t)_{t \in \N}$ be two independent sequences of i.i.d.~random variables uniformly distributed on $\{0,1,\dots, m-1\}$ and $\{1,\dots, n\}$ respectively. 
Let $\alpha>0$ and set the initial point $\tilde{\bx}^0 \equiv \tilde{\bxs}^0\in \HHs$. Then
\begin{equation}
\begin{array}{l}
\text{for}\;s=0,1,\ldots\\
\left\lfloor
\begin{array}{l}
\bx^0 = \tilde{\bx}^{s} \\
\text{for}\;k=0,\dots, m-1\\[0.7ex]
\left\lfloor
\begin{array}{l}
\bx^{k+1} = \prox_{\alpha f_{i_{sm+k}}}\left(\bx^k  + \alpha \nabla f_{i_{sm+k}}(\tilde{\bx}^{s}) - \alpha \nabla F(\tilde{\bx}^{s}) \right) 
\end{array}
\right. \\
\tilde{\bx}^{s+1} = \sum_{k=0}^{m-1} \delta_{k,\xi_s}\bx^{k}, \\
\text{or} \\
{\tilde{\bx}^{s+1} = \frac{1}{m} \sum_{k=0}^{m-1} \bx^{k},}
\end{array}
\right.
\end{array} \nonumber
\end{equation}
where $\delta_{k,h}$ is the Kronecker symbol. { In the case of the first option, one iterate is randomly selected among the inner iterates $\bx_0, \bx_1, \cdots, \bx_{m-1}$, losing possibly a lot of information computed in the inner loop. For the second option, those inner iterates are averaged and most of the information are used.}
\end{algorithm}
Let $s \in \N$. In this case, for all $k \in \{0,1,\cdots, m-1\}$, setting $j_k = i_{sm+k}$, the inner iteration procedure of Algorithm \ref{algo:srvprox2} can be identified with the general scheme \ref{algo:unified}, by setting $\bg^k \coloneqq \nabla f_{j_{k}}(\tilde{\bx}^{s}) - \nabla F(\tilde{\bx}^{s})$. In addition let us define
\begin{equation}\label{defsigmakSVRP}
\sigma_k^2 \coloneqq \frac{1}{n} \sum_{i=1}^{n} \left\|\nabla f_{i}(\tilde{\bx}^{s})-\nabla f_{i}(\tilde{\by}^{s})\right\|^{2} 
\end{equation}
where
 $\tilde{\by}^s \in \argmin F$ is such that $\norm{\tilde{\bx}^s-\tilde{\by}^s} = \dist(\tilde{\bx}^s, \argmin F)$. Moreover, setting
 $\Fk_{s,k} = \sigma(\xi_0, \dots, \xi_{s-1, }i_0, \dots, i_{sm+k-1})$,
we have that $\tilde{\bx}^s, \tilde{\by}^s$, 
and $\bx^k$ are $\Fk_{s,k}$-measurables and $j_k$ is independent of $\Fk_{s,k}$. The following result is proved in Appendix \ref{app:proofsect4}.
\begin{lemma}
\label{lem:20220927a}
Suppose that Assumption \ref{ass:a11} holds true.
Let $s \in \N$ and let $(\bx^k)_{k \in [m]}$ be the (finite) sequence generated by the inner iteration in Algorithm \ref{algo:srvprox2}. Set $\bv^{k}=\nabla f_{j_k}\left(\bx^{k}\right)-\nabla f_{j_k}(\tilde{\bx}^{s})+\nabla F (\tilde{\bx}^{s})$ and $\sigma_{k}$ as defined in \eqref{defsigmakSVRP}. Then, for every $k \in \{0,1,\cdots, m-1\}$, it holds
\begin{equation}\label{eq:20220927c}
    \EE[\| \bv^{k}\|^{2}\,\vert\, \Fk_{s,k}] \leq 4 L\big( F(\bx^{k})-F_*\big) + 2\sigma_k^2 -2\|\nabla F(\tilde{\bx}^{s})\|^2,
\end{equation}
and
\begin{equation*}
       \EE\big[\sigma_{k+1}^2 \,\vert\,\Fk_{s,0}\big] = \EE\big[\sigma_k^2 \,\vert\, \Fk_{s,0}\big].
\end{equation*}
\end{lemma}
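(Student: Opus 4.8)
The plan is to handle the two statements separately: the second-moment bound \eqref{eq:20220927c} carries essentially all of the work, whereas the identity on $\sigma_k^2$ is immediate. Throughout I will use the measurability and independence structure recorded just before the lemma, namely that $j_k$ is uniform on $[n]$ and independent of $\Fk_{s,k}$ while $\bx^k$, $\tilde{\bx}^s$ and $\tilde{\by}^s$ are $\Fk_{s,k}$-measurable. A first check is that $\EE[\nabla f_{j_k}(\bx^k)\,\vert\,\Fk_{s,k}] = \nabla F(\bx^k)$, and likewise at $\tilde{\bx}^s$ and $\tilde{\by}^s$, so that $\bv^k$ is an unbiased estimator of $\nabla F(\bx^k)$; this is not needed verbatim, but it guides the decomposition.

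The key algebraic step is to split $\bv^k$ around the chosen minimizer $\tilde{\by}^s$, exploiting $\nabla F(\tilde{\by}^s)=0$ and $F(\tilde{\by}^s)=F_*$. I would write $\bv^k = \ba + \bb$ with
\[
\ba = \nabla f_{j_k}(\bx^k) - \nabla f_{j_k}(\tilde{\by}^s), \qquad \bb = \nabla F(\tilde{\bx}^s) - \big(\nabla f_{j_k}(\tilde{\bx}^s) - \nabla f_{j_k}(\tilde{\by}^s)\big),
\]
and then apply $\|\ba+\bb\|^2 \le 2\|\ba\|^2 + 2\|\bb\|^2$ before taking $\EE[\cdot\,\vert\,\Fk_{s,k}]$. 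For the first term I would invoke the cocoercivity inequality valid for convex $L$-smooth functions, $\|\nabla f_i(\bx^k)-\nabla f_i(\tilde{\by}^s)\|^2 \le 2L\big(f_i(\bx^k)-f_i(\tilde{\by}^s) - \langle\nabla f_i(\tilde{\by}^s), \bx^k-\tilde{\by}^s\rangle\big)$; averaging over $i$, which is exactly taking $\EE[\cdot\,\vert\,\Fk_{s,k}]$, and using $\nabla F(\tilde{\by}^s)=0$ and $F(\tilde{\by}^s)=F_*$ collapses the linear and constant contributions and yields $\EE[\|\ba\|^2\,\vert\,\Fk_{s,k}] \le 2L(F(\bx^k)-F_*)$. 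For the second term I would observe that $\bb = \EE[X\,\vert\,\Fk_{s,k}] - X$ with $X = \nabla f_{j_k}(\tilde{\bx}^s)-\nabla f_{j_k}(\tilde{\by}^s)$, so $\bb$ is centered and the conditional variance identity $\EE[\|X-\EE[X\,\vert\,\Fk_{s,k}]\|^2\,\vert\,\Fk_{s,k}] = \EE[\|X\|^2\,\vert\,\Fk_{s,k}]-\|\EE[X\,\vert\,\Fk_{s,k}]\|^2$ gives $\EE[\|\bb\|^2\,\vert\,\Fk_{s,k}] = \sigma_k^2 - \|\nabla F(\tilde{\bx}^s)\|^2$, recalling that $\EE[X\,\vert\,\Fk_{s,k}] = \nabla F(\tilde{\bx}^s)$ and that $\sigma_k^2$ is, by \eqref{defsigmakSVRP}, precisely the conditional average of $\|X\|^2$. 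Summing $2\EE[\|\ba\|^2\,\vert\,\Fk_{s,k}]$ and $2\EE[\|\bb\|^2\,\vert\,\Fk_{s,k}]$ then reproduces \eqref{eq:20220927c} exactly.

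The identity $\EE[\sigma_{k+1}^2\,\vert\,\Fk_{s,0}] = \EE[\sigma_k^2\,\vert\,\Fk_{s,0}]$ is then immediate: by \eqref{defsigmakSVRP}, $\sigma_k^2$ depends on the outer index $s$ only, through $\tilde{\bx}^s$ and $\tilde{\by}^s$, and not on the inner index $k$, so in fact $\sigma_{k+1}^2 = \sigma_k^2$ as random variables and their conditional expectations coincide.

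The only delicate point, and the one I would watch most carefully, is the matching of constants: the bound must be tight enough to produce the subtractive term $-2\|\nabla F(\tilde{\bx}^s)\|^2$, which is what later lets the analysis certify $D\le 0$-type behaviour. This term arises solely from the $-\|\EE[X\,\vert\,\Fk_{s,k}]\|^2$ in the variance identity, and it is precisely the decision to split around a minimizer $\tilde{\by}^s$, so that $\nabla F(\tilde{\by}^s)=0$, that makes both the cocoercivity bound and the variance identity close cleanly under the factor-$2$ decomposition; a careless split would either lose the negative term or degrade the $4L$ constant. Everything else is routine bookkeeping with conditional expectations, relying on the measurability and independence relations stated in the lemma's hypotheses.
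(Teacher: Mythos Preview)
Your proposal is correct and follows essentially the same route as the paper: split $\bv^k$ around the minimizer $\tilde{\by}^s$, apply $\|\ba+\bb\|^2\le 2\|\ba\|^2+2\|\bb\|^2$, bound the first piece via the cocoercivity inequality averaged over $i$ (using $\nabla F(\tilde{\by}^s)=0$), and handle the second piece with the conditional variance identity to extract both $\sigma_k^2$ and the subtractive $-\|\nabla F(\tilde{\bx}^s)\|^2$. The identity on $\sigma_k^2$ is likewise dispatched exactly as you do, by noting it depends only on $s$.
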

As an immediate consequence, of Proposition \ref{prop:unified}, we have the following corollary regarding the inner iteration procedure of Algorithm \ref{algo:srvprox2}.

\begin{corollary}\label{cor:unifiedsvrg}
Suppose that Assumption \ref{ass:a11} holds.
Let $s \in \N$ and let $(\bx^k)_{k \in [m]}$ be the sequence generated by the inner iteration in Algorithm \ref{algo:srvprox2}. Then, for all $k \in \{0,1,\cdots, m-1\}$,
\begin{equation}\label{inner iter for SVRP}
\begin{aligned}
    \EE[\dist(\bx^{k+1}, \argmin F)^2]  &\leq 
     \EE[\dist(\bx^{k}, \argmin F)^2]\\
     &\qquad -2 \alpha\left(1-2\alpha L \right)\EE[F(\bx^{k})- F_*]  \\
     &\qquad + 4 L \alpha^{2} \EE\left[F(\tilde{\bx}^{s})-F_*\right] - 2\alpha^2\EE[\|\nabla F(\tilde{\bx}^{s})\|^2].
     \end{aligned}
\end{equation}
\end{corollary}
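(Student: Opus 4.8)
The plan is to treat \eqref{inner iter for SVRP} as a direct specialization of Proposition \ref{prop:unified}, reading off the constants $A,B,C,\rho,D$ from Lemma \ref{lem:20220927a} and then cleaning up the two $\sigma$-dependent terms using special features of SVRP. First I would match \eqref{eq:20220927c} against Assumption \ref{ass:b2}: since $\EE[\|\bv^k\|^2\,\vert\,\Fk_{s,k}] \leq 4L(F(\bx^k)-F_*) + 2\sigma_k^2 - 2\|\nabla F(\tilde{\bx}^s)\|^2$, we have $A = 2L$, $B = 2$, and $D = -2\|\nabla F(\tilde{\bx}^s)\|^2$, which is non-positive as required. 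The second identity in Lemma \ref{lem:20220927a} shows that Assumption \ref{ass:b3} holds with $\rho = 0$ and $C = 0$, while Assumption \ref{ass:b1} follows because $\bg^k = \nabla f_{j_k}(\tilde{\bx}^s) - \nabla F(\tilde{\bx}^s)$ with $j_k$ independent of $\Fk_{s,k}$, so its conditional expectation vanishes. The framework of Proposition \ref{prop:unified} therefore applies to each inner step $k \in \{0,\dots,m-1\}$ with these constants (the one-step nature of the inequality means a finite inner loop causes no difficulty).

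Plugging $A = 2L$, $B = 2$, $C = \rho = 0$ into the conclusion of Proposition \ref{prop:unified} gives, for any $M > 0$,
\begin{align*}
\EE[\dist(\bx^{k+1}, \argmin F)^2] + \alpha^2 M \EE[\sigma_{k+1}^2]
&\leq \EE[\dist(\bx^{k}, \argmin F)^2] + \alpha^2(M+2)\EE[\sigma_k^2] \\
&\qquad - 2\alpha(1-2\alpha L)\EE[F(\bx^{k}) - F_*] - 2\alpha^2 \EE[\|\nabla F(\tilde{\bx}^{s})\|^2].
\end{align*}
Next I would exploit that $\sigma_k^2$ in \eqref{defsigmakSVRP} depends only on $\tilde{\bx}^s$ and $\tilde{\by}^s$, hence is frozen along the inner loop; taking expectations in the second identity of Lemma \ref{lem:20220927a} gives $\EE[\sigma_{k+1}^2] = \EE[\sigma_k^2]$. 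Moving the term $\alpha^2 M \EE[\sigma_{k+1}^2]$ to the right-hand side then makes the $M$-weighted contributions cancel exactly (for every $M$), leaving $2\alpha^2\EE[\sigma_k^2]$. Finally, co-coercivity of the convex $L$-smooth $f_i$ yields $\|\nabla f_i(\tilde{\bx}^s) - \nabla f_i(\tilde{\by}^s)\|^2 \leq 2L\big(f_i(\tilde{\bx}^s) - f_i(\tilde{\by}^s) - \langle \nabla f_i(\tilde{\by}^s), \tilde{\bx}^s - \tilde{\by}^s\rangle\big)$; averaging over $i \in [n]$ and using $\tilde{\by}^s \in \argmin F$ (so that $\nabla F(\tilde{\by}^s) = 0$ and $F(\tilde{\by}^s) = F_*$) gives $\sigma_k^2 \leq 2L(F(\tilde{\bx}^s) - F_*)$. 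Substituting this bound converts $2\alpha^2\EE[\sigma_k^2]$ into $4L\alpha^2\EE[F(\tilde{\bx}^s) - F_*]$, producing exactly \eqref{inner iter for SVRP}.

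I do not expect any serious obstacle, since the statement is genuinely a corollary: the analytic work has already been done in Proposition \ref{prop:unified} and Lemma \ref{lem:20220927a}. The only two points needing slight care are recognising that $\sigma_k^2$ is constant across the inner iterations (which forces the exact cancellation of the $M$-terms and explains why $\rho=0$ is harmless here), and supplying the standard co-coercivity estimate that controls $\sigma_k^2$ by $F(\tilde{\bx}^s) - F_*$; both are immediate consequences of Assumption \ref{ass:a11} together with the definition of $\tilde{\by}^s$ as a closest minimizer of $F$.
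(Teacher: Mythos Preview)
Your proposal is correct and follows essentially the same route as the paper: identify $A=2L$, $B=2$, $C=\rho=0$, $D=-2\|\nabla F(\tilde{\bx}^{s})\|^{2}$ from Lemma~\ref{lem:20220927a}, apply Proposition~\ref{prop:unified}, and then convert the residual $\sigma_k^2$ term into $2L\big(F(\tilde{\bx}^{s})-F_*\big)$ via the standard co-coercivity estimate (which is exactly inequality~\eqref{eq:20220927b} in the paper). The only cosmetic difference is that the paper compresses your $M$-cancellation and the $\sigma_k^2$ bound into the single phrase ``Proposition~\ref{prop:unified} yields'' (effectively reading the proposition at $M=0$, or equivalently using the one-step estimate in its proof before the $M$-augmentation), and it passes through the conditional expectation $\EE[\cdot\,\vert\,\Fk_{s,0}]$ before taking total expectation; your more explicit treatment of these two steps is arguably cleaner.
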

\begin{proof}
Since Assumption \ref{ass:a11} is true, by Lemma \ref{lem:20220927a} Assumptions \ref{ass:variance} are satisfied with $\bv^{k}=\nabla f_{j_k}\left(\bx^{k}\right)-\nabla f_{j_k}(\tilde{\bx}^{s})+\nabla F (\tilde{\bx}^{s}), A = 2L, B=2, \rho=C=0,$ and $D = - 2 \|\nabla F(\tilde{\bx}^{s})\|^2$. So Proposition \ref{prop:unified} yields
\begin{equation*}
\begin{aligned}
    \EE[\dist(\bx^{k+1}, \argmin F)^2 \,\vert\, \Fk_{s,0}]  &\leq 
     \EE[\dist(\bx^{k}, \argmin F)^2 \,\vert\, \Fk_{s,0}]\\
     &\qquad -2 \alpha\left(1-2\alpha L \right)\EE[F(\bx^{k})- F_* \,\vert\, \Fk_{s,0}]  \\
     &\qquad + 4 L \alpha^{2} \EE\left[F(\tilde{\bx}^{s})-F_* \,\vert\, \Fk_{s,0}\right] - 2\alpha^2\EE[\|\nabla F(\tilde{\bx}^{s})\|^2 \,\vert\, \Fk_{s,0}].
     \end{aligned}
\end{equation*}
Thus, taking the total expectation, the statement follows.
\end{proof}
The next theorem shows that, under some additional assumptions on the choice of the step size $\alpha$ and the number of inner iterations $m\in \N$, Algorithm \ref{algo:srvprox2} yields a linear convergence rate in terms of the expectation of the objective function values of the outer iterates ($\tilde{\bx}^{s})_{s \in \N}$.
\begin{theorem}\label{thm:20221003a}
Suppose that Assumptions \ref{ass:funtion} are satisfied and that the sequence $(\tilde{\bx}^{s})_{s \in \N}$ is generated by Algorithm \ref{algo:srvprox2}
with
\begin{equation}
\label{eq:20230328a}
0<\alpha < \frac{1}{2(2L - \mu)}
\quad\text{and}\quad
m> \frac{1}{\mu \alpha (1 - 2\alpha(2L-\mu))}.
\end{equation}
Then, for all $s\in\N$, it holds
\begin{equation}\label{eq:20230208a}
    \EE\left[F\left(\tilde{\bx}^{s+1}\right)-F_*\right] \leq q^s
    \left(F(\bx^0)- F_*\right),
    \quad\text{with}\quad q \coloneqq \left(\frac{1}{\mu \alpha(1-2 L \alpha) m}+\frac{2\alpha(L - \mu)}{1-2 L \alpha}\right) < 1.
\end{equation}
\end{theorem}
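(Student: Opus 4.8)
The plan is to reduce the outer-loop analysis to a one-step contraction for the potential $a_s \coloneqq \EE[F(\tilde{\bx}^s) - F_*]$, using Corollary \ref{cor:unifiedsvrg} as the engine for the inner loop. First I would fix an outer index $s$, recall that $\bx^0 = \tilde{\bx}^s$, and sum the inner-iteration inequality \eqref{inner iter for SVRP} over $k = 0, \dots, m-1$. The distance terms telescope, and discarding the nonnegative term $\EE[\dist(\bx^m, \argmin F)^2]$ on the left yields
\begin{align*}
2\alpha(1-2\alpha L)\sum_{k=0}^{m-1}\EE[F(\bx^k) - F_*] &\leq \EE[\dist(\tilde{\bx}^s, \argmin F)^2] \\
&\qquad + 4Lm\alpha^2\,\EE[F(\tilde{\bx}^s) - F_*] - 2m\alpha^2\,\EE[\|\nabla F(\tilde{\bx}^s)\|^2].
\end{align*}

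Next I would inject the two consequences of Assumption \ref{ass:a12}. The PL inequality \eqref{PL condition} gives $\|\nabla F(\tilde{\bx}^s)\|^2 \geq 2\mu(F(\tilde{\bx}^s) - F_*)$, so the last two terms combine into $4m\alpha^2(L-\mu)\,\EE[F(\tilde{\bx}^s) - F_*]$ (note $\mu \le L$, which holds since $F$ is $L$-smooth and satisfies PL with constant $\mu$, so the coefficient is nonnegative). The equivalent quadratic-growth form \eqref{eq:mu_strongly_quasi_convex} bounds the distance term by $\tfrac{2}{\mu}\,\EE[F(\tilde{\bx}^s) - F_*]$, since $\bx^0 = \tilde{\bx}^s$.

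The one genuinely algorithm-specific step is relating $\sum_{k=0}^{m-1}\EE[F(\bx^k) - F_*]$ to $\EE[F(\tilde{\bx}^{s+1}) - F_*]$, and here I would treat the two update options together. For the random-selection option, conditioning on the inner iterates and using that $\xi_s$ is uniform on $\{0,\dots,m-1\}$ and independent of them gives $\EE[F(\tilde{\bx}^{s+1}) - F_*] = \tfrac{1}{m}\sum_{k=0}^{m-1}\EE[F(\bx^k) - F_*]$; for the averaging option, convexity of $F$ together with Jensen's inequality gives the same bound with ``$\le$''. Either way $\sum_{k=0}^{m-1}\EE[F(\bx^k) - F_*] \geq m\,\EE[F(\tilde{\bx}^{s+1}) - F_*]$, so substituting the two bounds above and dividing through by $2\alpha(1-2\alpha L)m$ produces the contraction $a_{s+1} \le q\,a_s$ with exactly the $q$ stated in \eqref{eq:20230208a}.

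Finally I would verify $q \in (0,1)$. Positivity of $1 - 2\alpha L$ follows from $\alpha < \tfrac{1}{2(2L-\mu)} \le \tfrac{1}{2L}$ (using $\mu \le L$). Rearranging $q < 1$, after multiplying through by $1-2\alpha L>0$, is equivalent to $\tfrac{1}{\mu\alpha m} < 1 - 2\alpha(2L-\mu)$, which is precisely the hypothesis $m > \tfrac{1}{\mu\alpha(1-2\alpha(2L-\mu))}$; the constraint $\alpha < \tfrac{1}{2(2L-\mu)}$ is exactly what makes the right-hand side positive, so both conditions in \eqref{eq:20230328a} are used. Iterating $a_{s+1}\le q\,a_s$ then gives $a_{s+1}\le q^{s+1}a_0 \le q^{s}a_0$ with $a_0 = F(\bx^0)-F_*$, which is the claimed geometric rate. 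I expect the main obstacle to be the careful bookkeeping in the telescoping-under-total-expectation step and the uniform treatment of the two options for $\tilde{\bx}^{s+1}$ (in particular invoking independence of $\xi_s$ correctly); once the contraction is assembled, checking that the prescribed ranges for $\alpha$ and $m$ force $q<1$ is routine algebra.
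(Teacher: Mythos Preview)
Your proposal is correct and follows essentially the same route as the paper's proof: sum the inner-loop inequality from Corollary \ref{cor:unifiedsvrg}, telescope the distance terms, apply the PL and quadratic-growth forms of Assumption \ref{ass:a12} to the $\|\nabla F(\tilde{\bx}^s)\|^2$ and $\dist(\tilde{\bx}^s,\argmin F)^2$ terms respectively, and identify the left-hand sum with $m\,\EE[F(\tilde{\bx}^{s+1})-F_*]$ via the uniform law of $\xi_s$ (or Jensen for the averaging option). Your verification that the parameter constraints in \eqref{eq:20230328a} are exactly what force $q<1$ is also the same algebra the paper relies on.
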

\begin{remark}\ 
\begin{enumerate}[label={\rm (\roman*)}]
\item  Conditions \eqref{eq:20230328a} is used in this form if $\alpha$ is set first and $m$ is chosen after. They are needed to ensure that $q < 1$. Indeed, it is clear that $0<\alpha < \frac{1}{2(2L - \mu)}$ is needed to have $\frac{2\alpha(L - \mu)}{1-2 L \alpha} < 1$. If not, $q$ cannot be less than $1$. Then we need $m> \frac{1}{\mu \alpha (1 - 2\alpha(2L-\mu))}$ once $\alpha$ is fixed.
\item Conditions \eqref{eq:20230328a} can be equivalently stated as follows
\begin{equation*}
    m \geq \frac{8(2L-\mu)}{\mu}
    \quad\text{and}\quad
    \frac{1 - \sqrt{1 - 8(2 \kappa - 1)/m}}{4(2L-\mu)}
     < \alpha \leq
    \frac{1 + \sqrt{1 - 8(2 \kappa - 1)/m}}{4(2L-\mu)},
    \quad \kappa = \frac L \mu.
\end{equation*}
The above formulas can be useful if one prefers to set the parameter $m$ first and set the step size $\alpha$ afterwards.
\item The convergence rate in Theorem~\ref{thm:20221003a} establishes the improvement from the outer step $s$ to $s+1$.  Of course, it depends on the number of inner iterations $m$. As expected, and as we can see from Equation \ref{eq:20230208a},  increasing $m$ improve the bound on the rate, and since $m$ is not bounded from above,  the best choice would be to let $m$ go to $+\infty$. In practice, there is no best choice of $m$, but empirically a balance between the number of inner and outer iterations should be found. Consequently, there is also no optimal choice  for $\alpha$ either.
{
\item It is worth mentioning that the linear convergence factor $q$ in \eqref{eq:20230208a} is better (smaller) than the one provided in \cite[Theorem $1$]{johnson2013accelerating} for the SVRG method for strongly convex functions. There, it is $\frac{1}{\mu \alpha(1-2 L \alpha) m}+\frac{2\alpha L }{1-2 L \alpha}$. The linear convergence factor in \eqref{eq:20230208a} is also better than the one in \cite[Proposition $3.1$]{zhang2022linear}, and also \cite[Theorem $1$]{gong2014linear}, dealing with a proximal version of SVRG for functions satisfying the PL condition \eqref{PL condition}. In both papers, it is $\frac{1}{2\mu \alpha(1-4 L \alpha) m}+\frac{4\alpha L(m+1)}{(1-4 L \alpha)m}$. However, we note that this improvement can also be obtained for the aforementioned SVRG methods using a similar analysis.
\item  We stress that following the lines of the proof of Theorem \ref{thm:20221003a}, the same linear convergence rate found in \eqref{eq:20230208a}, holds true also for the averaged iterate $\tilde{\bx}^{s+1} = \frac{1}{m} \sum_{k = 0}^{m-1}  \bx^k$. But, the analysis does not work for the last iterate of the inner loop.
}
\end{enumerate}
\end{remark}
{
\begin{remark}
In \cite{milzarek2022semismooth} a variant of Algorithm \ref{algo:srvprox2} (SVRP) in this paper, called SNSPP,  has been also proposed and analyzed. The SNSPP algorithm includes a subroutine to compute the proximity operator. This can be useful in practice when a closed form solution of the proximity operator is not available. Contingent on some additional conditions on the conjugate of $f_i$ and assuming semismoothness of the proximity mapping,  \cite{milzarek2022semismooth} provides a linear convergence rate for SNSPP for $F$ Lipschitz smooth and strongly convex. An ergodic sublinear convergence rate is also proved for weakly convex functions.
\end{remark}
}
\begin{proof}[Proof of Theorem \ref{thm:20221003a}]
We consider a fixed stage $s \in \N$
and $\tilde{\bx}^{s+1}$ is defined as in Algorithm \ref{algo:srvprox2}. By summing inequality \eqref{inner iter for SVRP} in Corollary \ref{cor:unifiedsvrg} over $k=0, \ldots, m-1$ and taking the total expectation, we obtain
\begin{equation}\label{for averaging}
\begin{aligned}
\EE[\dist(\bx^{m}, \argmin F)]^{2} &+ 2 \alpha(1-2 L \alpha) m \EE[F\left(\tilde{\bx}^{s+1}\right)-F_*] \\
&\leq \EE[\dist(\bx^{0}, \argmin F)]^{2}+4 L m \alpha^{2} \EE\left[F(\tilde{\bx}^{s})-F_*\right] - 2\alpha^2m\EE[\|\nabla F(\tilde{\bx}^{s})\|^2]\\
&= \EE[\dist(\tilde{\bx}^{s}, \argmin F)]^{2}+4 L m \alpha^{2} \EE\left[F(\tilde{\bx}^{s})-F_*\right] -2\alpha^2m\EE[\|\nabla F(\tilde{\bx}^{s})\|^2]\\
&\leq \frac{2}{\mu} \EE\left[F(\tilde{\bx}^{s})-F_*\right]+4 L m \alpha^{2} \EE\left[F(\tilde{\bx}^{s})-F_*\right]-2\alpha^2m\EE[\|\nabla F(\tilde{\bx}^{s})\|^2] \\
&= 2\left(\mu^{-1}+2 L m \alpha^{2}\right) \EE\left[F(\tilde{\bx}^{s})-F_*\right]-2\alpha^2m\EE[\|\nabla F(\tilde{\bx}^{s})\|^2] \\
 &\leq 2\left(\mu^{-1}+2 L m \alpha^{2} - 2\mu m \alpha ^2\right) \EE\left[F(\tilde{\bx}^{s})-F_*\right].
\end{aligned}
\end{equation}
In the first inequality, we used the fact that
\begin{equation*}
\sum_{k = 0}^{m-1} F(\bx^k) = m \sum_{k = 0}^{m-1} \frac{1}{m} F(\bx^k) = m \sum_{\xi = 0}^{m-1} \frac{1}{m} F\left({\textstyle\sum_{k=0}^{m-1} \delta_{k,\xi}\bx^k}\right) = m \EE[F(\tilde{\bx}^{s+1}) \,\vert\, \Fk_{s,m-1}].
\end{equation*}
Notice that relation \eqref{for averaging} is still valid by choosing $\tilde{\bx}^{s+1} = \sum_{k = 0}^{m-1} \frac{1}{m} \bx^k$ , in Algorithm \ref{algo:srvprox2}, and using Jensen inequality to lower bound $\sum_{k = 0}^{m-1} F(\bx^k)$ by $m F(\tilde{\bx}^{s+1})$. 

The second and the last inequalities use, respectively, the quadratic growth \eqref{eq:mu_strongly_quasi_convex} and the PL condition \eqref{PL condition}. We thus obtain
\begin{equation*}
\EE\left[F\left(\tilde{\bx}^{s+1}\right)-F_*\right] \leq\left(\frac{1}{\mu \alpha(1-2 L \alpha) m}+\frac{2\alpha(L - \mu)}{1-2 L \alpha}\right) \EE\left[F\left(\tilde{\bx}^{s}\right)-F_*\right]. 
\qedhere
\end{equation*}
\end{proof}
\subsection{Loopless SVRP}
In this paragraph, we propose a single-loop variant of the SVRP algorithm presented previously, by removing the burden of choosing the number of inner iterations. This idea is inspired by the Loopless Stochastic Variance-Reduced Gradient (L-SVRG) method, as proposed in \cite{kovalev2020don,hofmann2015variance} (see also \cite{gorbunov2020unified}) and here, we present the stochastic proximal method variant that we call L-SVRP.
\begin{algorithm}[\bf{L-SVRP}]
\label{algo:srvp-l}
Let $(i_k)_{k \in \N}$ be a sequence of i.i.d.~random variables uniformly distributed on $\{1,\dots, n\}$ and let $(\varepsilon^k)_{k \in \N}$ be a sequence of i.i.d~Bernoulli random variables such that $\mathsf{P}(\varepsilon^k=1)=p \in \left]0,1\right]$. Let $\alpha>0$ and
set the initial points $\bx^0 = \bu^0 \in \HHs$. Then
\begin{equation}
\begin{array}{l}
\text{for}\;k=0,1,\ldots\\
\left\lfloor
\begin{array}{l}
\bx^{k+1} = \prox_{\alpha f_{i_{k}}} \left(\bx^k + \alpha \nabla f_{i_{k}}(\bu^{k}) - \alpha \nabla F(\bu^{k}) \right)  \\
  \bu^{k+1}=(1-\varepsilon^k)\bu^k + \varepsilon^k \bx^k
\end{array}
\right.
\end{array} \nonumber
\end{equation}
\end{algorithm}
Here we note that Algorithm \ref{algo:srvp-l} can be identified with the general scheme \ref{algo:unified}, by setting $\bg^k \coloneqq \nabla f_{i_{k}}(\bu^{k}) - \nabla F(\bu^{k})$. In addition, we define
\begin{equation}\label{defsigmakloopless}
\sigma_k^2 \coloneqq \frac{1}{n} \sum_{i=1}^{n} \big\|\nabla f_{i}(\bu^{k})-\nabla f_{i}(\by^{k})\big\|^{2}
\end{equation}
with $\by^{k} \in \argmin F$ such that $\norm{\bu^{k}-\by^{k}} = \dist(\bu^{k}, \argmin F)$.
Moreover, setting
 $\Fk_k = \sigma(i_0, \dots, i_{k-1}, \varepsilon^0, \dots, \varepsilon^{k-1})$,
we have that $\bx^k$, $\bu^k$ and $\by^k$ are $\Fk_k$-measurable, $i_k$ and $\varepsilon^k$ are independent of $\Fk_k$.
\begin{lemma}
\label{lem:20221103a}
Suppose that Assumption \ref{ass:a11} is satisfied. Let $(\bx^k)_{k \in \N}$ be the sequence generated by Algorithm \ref{algo:srvp-l}, with $\bv^{k}=\nabla f_{i_k}(\bx^{k})-\nabla f_{i_k}(\bu^{k})
+\nabla F (\bu^{k})$ and $\sigma_{k}$ as defined in \eqref{defsigmakloopless}. Then for all $k\in\N$, it holds
\begin{equation}\label{eq:20221103a}
    \EE[\| \bv^{k}\|^{2}\,\vert\, \Fk_k] \leq 4 L\big( F(\bx^{k})-F_*\big) + 2\sigma_k^2,
\end{equation}
and
\begin{equation}\label{eq:20221103b}
        \EE\big[\sigma_{k+1}^2\big] \leq (1-p)\EE\big[\sigma_k^2\big] + 2 p L \EE\big[ F(\bx^{k})-F_*\big].
\end{equation}
\end{lemma}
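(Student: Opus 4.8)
The plan is to derive both estimates from one ingredient---the co-coercivity (descent) inequality for $L$-smooth convex functions---together with the finite-sum structure of $F$. For \eqref{eq:20221103a} I would split $\bv^k$ around the reference minimizer $\by^k$,
\[
\bv^k = \underbrace{\big[\nabla f_{i_k}(\bx^k) - \nabla f_{i_k}(\by^k)\big]}_{=:\,\ba} + \underbrace{\big[\nabla f_{i_k}(\by^k) - \nabla f_{i_k}(\bu^k) + \nabla F(\bu^k)\big]}_{=:\,\bb},
\]
the point being that $\EE[\bb\,\vert\,\Fk_k]=0$, since $\nabla F(\by^k)=0$. Applying $\|\ba+\bb\|^2\le 2\|\ba\|^2+2\|\bb\|^2$ and taking $\EE[\,\cdot\,\vert\,\Fk_k]$ (using that $i_k$ is uniform and independent of $\Fk_k$ while $\bx^k,\bu^k,\by^k$ are $\Fk_k$-measurable), the first term becomes $\EE[\|\ba\|^2\,\vert\,\Fk_k]=\frac1n\sum_{i=1}^n\|\nabla f_i(\bx^k)-\nabla f_i(\by^k)\|^2$. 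The crucial observation is that summing the smoothness bound $\|\nabla f_i(\bx)-\nabla f_i(\by)\|^2\le 2L\big(f_i(\bx)-f_i(\by)-\langle\nabla f_i(\by),\bx-\by\rangle\big)$ over $i$ makes the gradient and cross terms at $\by^k$ collapse through $\nabla F(\by^k)=0$ and $F(\by^k)=F_*$, leaving exactly $2L(F(\bx^k)-F_*)$. Since $\bb$ is the $\Fk_k$-conditional fluctuation of $\nabla f_{i_k}(\by^k)-\nabla f_{i_k}(\bu^k)$, its conditional second moment is $\sigma_k^2-\|\nabla F(\bu^k)\|^2$, recalling the definition \eqref{defsigmakloopless} of $\sigma_k^2$; discarding the non-positive correction yields \eqref{eq:20221103a}. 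This mirrors Lemma \ref{lem:20220927a} with the negative term simply dropped.

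For the recursion \eqref{eq:20221103b} I would exploit the two-point law of $\bu^{k+1}$. Because $\argmin F$ is nonempty, closed and convex, the projection onto it is single-valued, so $\sigma_{k+1}^2=g(\bu^{k+1})$ and $\sigma_k^2=g(\bu^k)$ for the deterministic map $g(\bu)=\frac1n\sum_{i=1}^n\|\nabla f_i(\bu)-\nabla f_i(P_{\argmin F}(\bu))\|^2$. Conditioning on $\Fk_k$ and integrating over the independent Bernoulli $\varepsilon^k$, the update $\bu^{k+1}=(1-\varepsilon^k)\bu^k+\varepsilon^k\bx^k$ gives
\[
\EE[\sigma_{k+1}^2\,\vert\,\Fk_k] = (1-p)\,g(\bu^k) + p\,g(\bx^k) = (1-p)\sigma_k^2 + p\,g(\bx^k).
\]
Bounding $g(\bx^k)\le 2L(F(\bx^k)-F_*)$ by the same collapsing smoothness argument and taking total expectation delivers \eqref{eq:20221103b}.

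I expect the main obstacle to be measurability bookkeeping rather than algebra. Two points need care: that the $i_k$-average of the gradient-difference terms lands on $F$ exactly through its finite-sum definition (so that $\nabla F(\by^k)=0$ and $F(\by^k)=F_*$ can be invoked), and that $\sigma_{k+1}^2$ is a genuine deterministic function of $\bu^{k+1}$, so that the conditional expectation over $\varepsilon^k$ splits cleanly into the $\bu^k$- and $\bx^k$-branches. The latter relies on uniqueness of the projection onto $\argmin F$ and on the consistency $\by^{k+1}=\by^k$ on the event $\{\varepsilon^k=0\}$. Once these are settled, both inequalities follow from a single application of the smoothness estimate.
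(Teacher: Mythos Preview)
Your proposal is correct and follows essentially the same approach as the paper. The paper proves \eqref{eq:20221103a} by invoking the proof of Lemma~\ref{lem:20220927a} verbatim with $\bu^k$ in place of $\tilde{\bx}^s$, which amounts to precisely your decomposition $\bv^k=\ba+\bb$ around the point $\by^k=P_{\argmin F}(\bu^k)$; for \eqref{eq:20221103b} the paper likewise writes $\sigma_{k+1}^2$ as a deterministic function of $\bu^{k+1}$, splits the conditional expectation over the Bernoulli variable, and bounds the $\bx^k$-branch via the same co-coercivity estimate you identify.
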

Lemma \ref{lem:20221103a} whose proof can be found in Appendix \ref{app:proofsect4} ensures that Assumptions \ref{ass:variance} hold true with constants $A = 2L, B=2, C = pL$, $\rho=p$ and $D \equiv 0$. Then the following corollaries can be obtained by applying respectively Theorem \ref{theo:unifiedmainonlyconvex} and \ref{theo:unifiedmain} on Algorithm \ref{algo:srvp-l}.
\begin{corollary}\label{cor:lsvrponlyconvex}
Suppose that Assumption \ref{ass:a11} holds. Suppose also that the sequence $(\bx^k)_{k \in \N}$ is generated by Algorithm \ref{algo:srvp-l}. Let $M$ such that $M \geq \frac{2}{p}$ and $\alpha > 0$ such that $\alpha < \frac{1}{L(2 + p M)}$. Then, for all $k \in \N$,
\begin{align}
    \EE[F(\bbar{\bx}^{k})- F_*] &\leq \frac{\dist(\bx^{0}, \argmin F)^{2} + \alpha^2M\EE[\sigma_{0}^2]}{2 \alpha k\left[1-\alpha L(2+ pM)\right]} \nonumber,
\end{align}
with $\bbar{\bx}^{k} = \frac{1}{k}\sum_{t=0}^{k-1}\bx^t$.
\end{corollary}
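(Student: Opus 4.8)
The plan is to apply Theorem~\ref{theo:unifiedmainonlyconvex} directly, since Lemma~\ref{lem:20221103a} has already established that Algorithm~\ref{algo:srvp-l} fits into the framework of the general scheme~\ref{algo:unified} with the explicit constants $A = 2L$, $B = 2$, $C = pL$, $\rho = p$, and $D \equiv 0$. The only work is to verify that the two hypotheses of Theorem~\ref{theo:unifiedmainonlyconvex}, namely $\rho > 0$, $M \geq B/\rho$, and $\alpha < 1/(A + MC)$, translate correctly into the stated conditions $M \geq 2/p$ and $\alpha < 1/(L(2 + pM))$, and then to substitute the constants into the conclusion.

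First I would note that $\rho = p > 0$ holds by the assumption $p \in \,]0,1]$, so the hypothesis $\rho > 0$ of Theorem~\ref{theo:unifiedmainonlyconvex} is met. Next, the condition $M \geq B/\rho$ becomes $M \geq 2/p$, which is exactly the stated requirement on $M$. For the stepsize condition, I would compute $A + MC = 2L + M p L = L(2 + pM)$, so that $\alpha < 1/(A + MC)$ is precisely $\alpha < 1/(L(2 + pM))$. With these identifications in place, the hypotheses of Theorem~\ref{theo:unifiedmainonlyconvex} are satisfied.

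The final step is to read off the conclusion. Theorem~\ref{theo:unifiedmainonlyconvex} gives, for all $k \in \N$,
\begin{equation*}
    \EE[F(\bbar{\bx}^{k}) - F_*] \leq \frac{\dist(\bx^{0}, \argmin F)^{2} + \alpha^2 M \EE[\sigma_0^2]}{2\alpha k[1 - \alpha(A + MC)]},
\end{equation*}
and substituting $A + MC = L(2 + pM)$ into the denominator yields exactly the claimed bound with $\bbar{\bx}^{k} = \frac{1}{k}\sum_{t=0}^{k-1}\bx^t$. Since $D \equiv 0$, there is no obstruction from a positive $D$ term, so the constant-stepsize sublinear rate holds unconditionally.

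There is no genuine obstacle here; the proof is a verification that the constants supplied by Lemma~\ref{lem:20221103a} satisfy the abstract hypotheses and a direct substitution. The only point requiring a line of attention is the algebraic identity $A + MC = L(2 + pM)$, which is what converts the abstract stepsize constraint into the explicit one stated in the corollary.
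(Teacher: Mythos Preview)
Your proposal is correct and matches the paper's approach exactly: the paper states that Lemma~\ref{lem:20221103a} yields the constants $A = 2L$, $B = 2$, $C = pL$, $\rho = p$, $D \equiv 0$, and that Corollary~\ref{cor:lsvrponlyconvex} is obtained by applying Theorem~\ref{theo:unifiedmainonlyconvex} to Algorithm~\ref{algo:srvp-l}. Your verification of the hypotheses and substitution of $A + MC = L(2 + pM)$ is precisely what is needed.
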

\begin{corollary}\label{cor:lsvrp}
Suppose that Assumptions \ref{ass:funtion} are verified. Suppose now that the sequence $(\bx^k)_{k \in \N}$ is generated by Algorithm \ref{algo:srvp-l}. Let $M$ such that $M > 2/p$ and $\alpha > 0$ such that $\alpha < \frac{1}{L(2 + p M)}$. Set $ q \coloneqq \max\left\{1 - \alpha\mu\left(1-\alpha L(2 + p M)\right), 1+\frac{2}{M}-p\right\}$. Then $q \in ]0,1[$ and for all $k \in \N$,
\begin{align*}
    \EE[\dist(\bx^{k}, \argmin F)]^{2} &\leq  q^{k} \left(\dist(\bx^{0}, \argmin F)^{2} + \alpha^2M\EE[\sigma_{0}^2]\right)\nonumber,
    \\[1ex]
    \EE[F(\bx^k)- F_*] &\leq \frac{q^k L}{2} \left(\dist(\bx^{0}, \argmin F)^{2} + \alpha^2M\EE[\sigma_{0}^2]\right).
\end{align*}
\end{corollary}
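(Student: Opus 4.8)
The plan is to obtain this corollary as a direct specialization of Theorem \ref{theo:unifiedmain}, since the substantive work has already been carried out there and in Lemma \ref{lem:20221103a}. First I would invoke Lemma \ref{lem:20221103a}: under Assumption \ref{ass:a11}, with $\bv^{k}=\nabla f_{i_k}(\bx^{k})-\nabla f_{i_k}(\bu^{k})+\nabla F (\bu^{k})$ and $\sigma_k$ as in \eqref{defsigmakloopless}, the bounds \eqref{eq:20221103a} and \eqref{eq:20221103b} show that Assumptions \ref{ass:variance} hold with the explicit constants $A = 2L$, $B = 2$, $C = pL$, $\rho = p$, and $D \equiv 0$. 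In particular $\rho = p > 0$ because $p \in \left]0,1\right]$, so the hypothesis $\rho > 0$ of Theorem \ref{theo:unifiedmain} is met, and Assumptions \ref{ass:funtion} are in force by hypothesis.

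Next I would check that the conditions on $M$ and $\alpha$ stated in the corollary are exactly those of the theorem once the constants are substituted. Indeed $B/\rho = 2/p$, so $M > 2/p$ is $M > B/\rho$; and $A + MC = 2L + MpL = L(2 + pM)$, so $\alpha < 1/(L(2+pM))$ is $\alpha < 1/(A+MC)$. The same substitution in the definition of $q$ from Theorem \ref{theo:unifiedmain} gives $1 - \alpha\mu(1-\alpha(A+MC)) = 1 - \alpha\mu(1 - \alpha L(2+pM))$ and $1 + B/M - \rho = 1 + 2/M - p$, so the quantity $q$ defined in the corollary coincides with the $q$ of the theorem.

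Having matched every hypothesis, I would simply apply Theorem \ref{theo:unifiedmain}: it yields $q \in \left]0,1\right[$ together with $V^{k+1} \le q V^k$, where $V^k = \EE[\dist(\bx^{k}, \argmin F)^2] + \alpha^2M\EE[\sigma_{k}^2]$, whence $V^k \le q^k V^0$ by iteration. Since $\EE[\dist(\bx^{k}, \argmin F)^2] \le V^k$ and $V^0 = \dist(\bx^{0}, \argmin F)^{2} + \alpha^2M\EE[\sigma_{0}^2]$ (as $\bx^0$ is deterministic), the first displayed bound follows at once; the second then follows from the descent-lemma estimate \eqref{eq:20230313b} applied at $\bx^k$, exactly as in the proof of Theorem \ref{theo:unifiedmain}.

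There is essentially no analytic obstacle here; the only point requiring care is bookkeeping, namely making sure the constants delivered by Lemma \ref{lem:20221103a} are substituted consistently into the hypotheses $M > B/\rho$ and $\alpha < 1/(A+MC)$ and into the rate $q$, and verifying that the strict inequality $M > 2/p$ (needed for the PL/linear regime, in contrast with the non-strict $M \ge 2/p$ of the purely convex Corollary \ref{cor:lsvrponlyconvex}) is indeed what Theorem \ref{theo:unifiedmain} requires.
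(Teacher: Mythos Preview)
Your proposal is correct and follows precisely the paper's approach: the paper explicitly states that Lemma \ref{lem:20221103a} yields Assumptions \ref{ass:variance} with $A=2L$, $B=2$, $C=pL$, $\rho=p$, $D\equiv 0$, and that Corollary \ref{cor:lsvrp} is then obtained by applying Theorem \ref{theo:unifiedmain}. Your substitution of constants and verification that the hypotheses match is exactly the intended argument.
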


{
\begin{remark}
The proximal version of L-SVRG \cite{kovalev2020don} has been concurrently proposed in~\cite{khaled2022faster}. Linear convergence holds in the smooth strongly convex setting.  In \cite{khaled2022faster}, an approximation of the proximity operator at each iteration is used. Also, Lipschitz continuity of the gradient is replaced by the weaker ``second-order similarity'', namely
\begin{equation*}
    \frac{1}{n} \sum_{i=1}^n\left\|\nabla f_i(\bxx)-\nabla f(\bxx)-\left[\nabla f_i(\byy)-\nabla f(\byy)\right]\right\|^2 \leq \delta^2\|\bxx-\byy\|^2.
\end{equation*}
In \cite{khaled2022faster} the  convex case is not analyzed.
\end{remark}
}
\subsection{Stochastic Average Proximal Algorithm}
In this paragraph, we propose a new stochastic proximal point method in analogy to SAGA \cite{defazio2014saga}, called \emph{Stochastic Aggregated Proximal Algorithm} (SAPA).
\begin{algorithm}[\bf{SAPA}] \label{algo:sagaprox}
Let $(i_k)_{k \in \N}$ be a sequence of i.i.d.~random variables uniformly distributed on $\{1,\dots, n\}$. Let $\alpha>0$ and set, for every $i \in [n]$, $\bphi_i^0=\bx^0 \in \HHs$. Then
\begin{equation}
\begin{array}{l}
\text{for}\;k=0,1,\ldots\\
\left\lfloor
\begin{array}{l}
\bx^{k+1} = \prox_{\alpha f_{i_{k}}}\left(\bx^k + \alpha \nabla f_{i_k} (\bphi_{i_k}^k) - \frac{\alpha}{n} \sum_{i=1}^n \nabla f_i (\bphi_i^k) \right), \\
\forall\, i \in [n]\colon\ \bphi^{k+1}_i = \bphi^k_i + \delta_{i,i_k} (\bx^{k} - \bphi^k_i),
\end{array}
\right.
\end{array} \nonumber
\end{equation}
where $\delta_{i,j}$ is the Kronecker symbol.
\end{algorithm}
{
\begin{remark}
Algorithm \ref{algo:sagaprox} is similar but different to the  Point-SAGA one proposed in \cite{defazio2016simple}. The update of the stored gradients for Point-SAGA  is
\begin{equation*}
    \forall\, i \in [n]\colon\ \bphi^{k+1}_i = \bphi^k_i + \delta_{i,i_k} (\bx^{k+1} - \bphi^k_i),
\end{equation*}
whereas in SAGA \cite{defazio2014saga} and its proximal version that we proposed in Algorithm \ref{algo:sagaprox}, the update is
\begin{equation*}
    \forall\, i \in [n]\colon\ \bphi^{k+1}_i = \bphi^k_i + \delta_{i,i_k} (\bx^{k} - \bphi^k_i).
\end{equation*}
\end{remark}
}
As for the previous cases, SAPA can be identified with Algorithm \ref{algo:unified}, by setting $\bg^k \coloneqq \nabla f_{i_{k}}(\bphi_{i_k}^k) - \frac{1}{n} \sum_{i=1}^n \nabla f_i (\bphi_i^k)$ for all $k\in \N$. In addition, let
\begin{equation}\label{sigmakSAPA}
\sigma_k^2 \coloneqq \frac{1}{n} \sum_{i=1}^{n} \big\|\nabla f_{i}(\bphi_i^k)-\nabla f_{i}(\bx^{*})\big\|^{2}
\end{equation} 
with $\bx^{*} \in \argmin F$ such that $\norm{\bx^0-\bx^{*}} = \dist(\bx^0, \argmin F)$. Setting
 $\Fk_k = \sigma(i_0, \dots, i_{k-1})$,
we have that $\bx^k$ and $\bphi_i^k$ are $\Fk_k$-measurables and $i_k$ is independent of $\Fk_k$.
\begin{lemma}
\label{lem:20221004a}
Suppose that Assumption \ref{ass:a11} holds. Let $(\bx^k)_{k \in \N}$ be the sequence generated by Algorithm \ref{algo:sagaprox}, with $\bv^{k}=\nabla f_{i_k}\left(\bx^{k}\right)-\nabla f_{i_k}(\bphi_{i_k}^k)+\frac{1}{n} \sum_{i=1}^n \nabla f_i (\bphi_i^k)$ and $\sigma_{k}$ as defined in \eqref{sigmakSAPA}. Then, for all $k\in \N$, it holds

\begin{equation*}
    \EE\big[\| \bv^{k}\|^{2}\,\vert\, \Fk_k\big] \leq 4 L( F(\bx^{k})-F_*) + 2\sigma_k^2, 
\end{equation*}
and
\begin{equation}\label{eq:20221004c}
        \EE\left[\sigma_{k+1}^2\right] \leq \left(1 - \frac{1}{n}\right) \EE\left[\sigma_k^2 \right] + \frac{2L}{n}\EE\big[F(\bx^k) - F_*\big].
\end{equation}
\end{lemma}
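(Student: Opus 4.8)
The plan is to prove the two inequalities in Lemma~\ref{lem:20221004a} separately, mirroring the structure of the earlier lemmas (Lemma~\ref{lem:20230217a} and Lemma~\ref{lem:20220927a}) since the constants $A=2L$, $B=2$ are identical and only the $\sigma_k^2$-recursion changes. For the first inequality, I would start from the definition $\bv^k=\nabla f_{i_k}(\bx^k)-\nabla f_{i_k}(\bphi_{i_k}^k)+\frac 1n\sum_i\nabla f_i(\bphi_i^k)$ and use the elementary bound $\|\ba+\bb\|^2\le 2\|\ba\|^2+2\|\bb\|^2$ to split the conditional second moment into a term controlled by $\nabla f_{i_k}(\bx^k)-\nabla f_{i_k}(\bx^*)$ and a variance-type term. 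The key point is that, conditionally on $\Fk_k$, the index $i_k$ is uniform and independent, so $\EE[\nabla f_{i_k}(\bphi_{i_k}^k)-\frac 1n\sum_i\nabla f_i(\bphi_i^k)\,\vert\,\Fk_k]=0$, which lets me bound the second group by the (uncentered) second moment $\frac 1n\sum_i\|\nabla f_i(\bphi_i^k)-\nabla f_i(\bx^*)\|^2=\sigma_k^2$ after inserting and subtracting $\nabla f_i(\bx^*)$ and using $\EE\|X-\EE X\|^2\le \EE\|X\|^2$. For the first group I would invoke the standard smoothness-convexity inequality $\|\nabla f_{i_k}(\bx^k)-\nabla f_{i_k}(\bx^*)\|^2\le 2L\big(f_{i_k}(\bx^k)-f_{i_k}(\bx^*)-\langle\nabla f_{i_k}(\bx^*),\bx^k-\bx^*\rangle\big)$, take the conditional expectation to turn the $f_{i_k}$-terms into $F$-terms, and note that the linear term vanishes because $\nabla F(\bx^*)=0$, yielding $4L(F(\bx^k)-F_*)$ exactly as in the previous lemmas.

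For the second inequality, the recursion \eqref{eq:20221004c}, I would compute $\EE[\sigma_{k+1}^2\,\vert\,\Fk_k]$ directly from the update rule $\bphi^{k+1}_i=\bphi^k_i+\delta_{i,i_k}(\bx^k-\bphi^k_i)$. Conditionally on $\Fk_k$, each index $i$ is resampled (i.e.\ $\bphi_i^{k+1}=\bx^k$) with probability $1/n$ and left unchanged with probability $1-1/n$, so
\begin{equation*}
\EE\big[\|\nabla f_i(\bphi_i^{k+1})-\nabla f_i(\bx^*)\|^2\,\vert\,\Fk_k\big]=\Big(1-\tfrac1n\Big)\|\nabla f_i(\bphi_i^k)-\nabla f_i(\bx^*)\|^2+\tfrac1n\|\nabla f_i(\bx^k)-\nabla f_i(\bx^*)\|^2.
\end{equation*}
Averaging over $i$ gives $\EE[\sigma_{k+1}^2\,\vert\,\Fk_k]=(1-\tfrac1n)\sigma_k^2+\tfrac1n\cdot\frac 1n\sum_i\|\nabla f_i(\bx^k)-\nabla f_i(\bx^*)\|^2$. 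The last sum is again bounded by $2L(F(\bx^k)-F_*)$ via the same smoothness-convexity inequality summed over $i$ (using $\nabla F(\bx^*)=0$), which produces the coefficient $\frac{2L}{n}$. Taking total expectation then yields \eqref{eq:20221004c} with $\rho=1/n$ and $C=L/n$.

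The main obstacle is the per-index indicator bookkeeping in the SAPA update: unlike SVRP or L-SVRP where a single anchor point is refreshed, here each of the $n$ stored points $\bphi_i^k$ evolves, and only the one indexed by $i_k$ is updated to $\bx^k$. I must be careful that $\bx^k$ and all $\bphi_i^k$ are $\Fk_k$-measurable while $i_k$ is independent of $\Fk_k$, so that the Kronecker symbol $\delta_{i,i_k}$ has conditional expectation exactly $1/n$ and is independent of the deterministic (given $\Fk_k$) quantities it multiplies. The identity $\EE[\delta_{i,i_k}\,\vert\,\Fk_k]=1/n$ together with $\delta_{i,i_k}^2=\delta_{i,i_k}$ is what makes the two-point decomposition of $\EE[\|\nabla f_i(\bphi_i^{k+1})-\nabla f_i(\bx^*)\|^2\,\vert\,\Fk_k]$ exact rather than merely an upper bound. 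Once this conditioning is set up correctly, both estimates follow from the same smoothness-convexity inequality used throughout, and the detailed computation can be relegated to Appendix~\ref{app:proofsect4}.
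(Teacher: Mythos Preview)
Your proposal is correct and follows essentially the same approach as the paper's proof: the same $\|\ba+\bb\|^2\le 2\|\ba\|^2+2\|\bb\|^2$ splitting around $\nabla f_{i_k}(\bx^*)$, the same variance inequality $\EE\|X-\EE X\|^2\le\EE\|X\|^2$ applied to $X=\nabla f_{i_k}(\bphi_{i_k}^k)-\nabla f_{i_k}(\bx^*)$, the same smoothness--convexity bound \eqref{eq:20220927b}, and the same per-index two-point computation for the $\sigma_k^2$ recursion using $\EE_k[\delta_{i,i_k}]=1/n$. The paper additionally records the extra negative term $-2\big\|\tfrac1n\sum_i\nabla f_i(\bphi_i^k)\big\|^2$ before discarding it, but this does not affect the stated lemma.
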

From the above lemma, we know that Assumptions \ref{ass:variance} are verified with $A = 2L, B=2, C = \frac{L}{n}$, $\rho=\frac{1}{n}$ and $D \equiv 0$. These allow us to state the next corollaries obtained by applying respectively Theorem \ref{theo:unifiedmainonlyconvex} and \ref{theo:unifiedmain} on Algorithm \ref{algo:sagaprox}.
\begin{corollary}
Suppose that Assumptions \ref{ass:a11} are verified. Suppose also that the sequence $(\bx^k)_{k \in \N}$ is generated by Algorithm \ref{algo:sagaprox}. Let $M$ such that $M \geq 2n$ and $\alpha > 0$ such that $\alpha < \frac{1}{L(2+M/n)}$. Then, for all $k \in \N$,
\begin{align}
    \EE[F(\bbar{\bx}^{k})- F_*] &\leq \frac{\dist(\bx^{0}, \argmin F)^{2} + \alpha^2M\EE[\sigma_{0}^2]}{2 \alpha k\left[1-\alpha L(2+ M/n)\right]} \nonumber,
\end{align}
with $\bbar{\bx}^{k} = \frac{1}{k}\sum_{t=0}^{k-1}\bx^t$.
\end{corollary}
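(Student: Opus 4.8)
The plan is to apply Theorem~\ref{theo:unifiedmainonlyconvex} directly, after verifying that its hypotheses are met by the SAPA instantiation of Algorithm~\ref{algo:unified}. By the discussion preceding the statement, Lemma~\ref{lem:20221004a} guarantees that Assumptions~\ref{ass:variance} hold with $A = 2L$, $B = 2$, $C = L/n$, $\rho = 1/n$, and $D \equiv 0$. Since $\rho = 1/n > 0$, the theorem is applicable. First I would check the condition $M \geq B/\rho$: here $B/\rho = 2/(1/n) = 2n$, so the hypothesis $M \geq 2n$ is exactly $M \geq B/\rho$. Next I would verify $\alpha < 1/(A + MC)$: substituting gives $A + MC = 2L + M L/n = L(2 + M/n)$, so $\alpha < 1/(L(2 + M/n))$ matches the stated stepsize restriction.

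With both structural conditions confirmed, the conclusion of Theorem~\ref{theo:unifiedmainonlyconvex} reads
\begin{align}
    \EE[F(\bbar{\bx}^{k})- F_*] &\leq \frac{\dist(\bx^{0}, \argmin F)^{2} + \alpha^2M \EE[\sigma_{0}^2]}{2 \alpha k\left[1-\alpha (A+ MC)\right]} \nonumber.
\end{align}
Substituting $A + MC = L(2 + M/n)$ into the denominator turns $1 - \alpha(A + MC)$ into $1 - \alpha L(2 + M/n)$, which is precisely the bracketed term in the corollary's claimed bound. The numerator is unchanged, and $\bbar{\bx}^k = \frac{1}{k}\sum_{t=0}^{k-1}\bx^t$ is exactly the averaged iterate appearing in the theorem. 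Hence the corollary follows by direct substitution.

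There is essentially no obstacle here: the work is purely bookkeeping, matching the generic constants $(A,B,C,\rho,D)$ identified in Lemma~\ref{lem:20221004a} against the hypotheses and conclusion of Theorem~\ref{theo:unifiedmainonlyconvex}. The only point requiring a moment of care is confirming that the two inequality conditions translate correctly: that $M \geq 2n$ is the faithful transcription of $M \geq B/\rho$, and that the stepsize bound $\alpha < 1/(L(2 + M/n))$ coincides with $\alpha < 1/(A + MC)$ after substituting the specific values. Both checks are immediate algebraic simplifications. Thus the proof amounts to a single sentence invoking Lemma~\ref{lem:20221004a} and Theorem~\ref{theo:unifiedmainonlyconvex} with the explicit constants.
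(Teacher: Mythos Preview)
Your proposal is correct and is exactly the paper's approach: the paper states that the corollary follows by applying Theorem~\ref{theo:unifiedmainonlyconvex} with the constants $A=2L$, $B=2$, $C=L/n$, $\rho=1/n$, $D\equiv 0$ supplied by Lemma~\ref{lem:20221004a}, which is precisely the substitution and condition-checking you carried out.
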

\begin{corollary}\label{cor:saga}
Suppose that Assumptions \ref{ass:funtion} hold. Suppose now that the sequence $(\bx^k)_{k \in \N}$ is generated by Algorithm \ref{algo:sagaprox}. Let $M$ such that $M > 2n$ and $\alpha > 0$ such that $\alpha < \frac{1}{L(2+M/n)}$. Set $q \coloneqq \max\left\{1 - \alpha\mu\left(1-\alpha L(2+M/n)\right), 1+\frac{2}{M}-\frac{1}{n}\right\}$. Then $q \in ]0,1[$ and for all $k \in \N$,
\begin{align*}
    \EE[\dist(\bx^{k}, \argmin F)]^{2} &\leq  q^{k} \left(\dist(\bx^{0}, \argmin F)^{2} + \alpha^2M\EE[\sigma_{0}^2]\right)\nonumber,
        \\[1ex]
    \EE[F(\bx^k)- F_*] &\leq \frac{q^k L}{2} \left(\dist(\bx^{0}, \argmin F)^{2} + \alpha^2M\EE[\sigma_{0}^2]\right).
\end{align*}
\end{corollary}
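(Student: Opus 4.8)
The plan is to deduce this corollary as a direct specialization of Theorem \ref{theo:unifiedmain}, exactly as the surrounding text announces. The essential input is Lemma \ref{lem:20221004a}, which certifies that the SAPA iterates satisfy Assumptions \ref{ass:variance} with the explicit constants $A = 2L$, $B = 2$, $C = L/n$, $\rho = 1/n$, and $D \equiv 0$. Combined with the standing Assumptions \ref{ass:funtion} (convexity, $L$-smoothness, and the PL condition), every hypothesis of Theorem \ref{theo:unifiedmain} is available, so what remains is only to check that the abstract parameter conditions on $M$ and $\alpha$ reduce to the stated ones.

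First I would translate the two admissibility conditions. The requirement $M > B/\rho$ reads $M > 2n$, which is precisely the hypothesis; and the requirement $\alpha < 1/(A + MC)$ reads $\alpha < 1/(L(2 + M/n))$ after computing $A + MC = 2L + ML/n = L(2 + M/n)$. Second, I would substitute the same constants into the contraction factor: the abstract definition $q = \max\{1 - \alpha\mu(1 - \alpha(A + MC)),\, 1 + B/M - \rho\}$ becomes $q = \max\{1 - \alpha\mu(1 - \alpha L(2 + M/n)),\, 1 + 2/M - 1/n\}$, and Theorem \ref{theo:unifiedmain} already guarantees $q \in {]0,1[}$ under these conditions. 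The two displayed estimates on $\EE[\dist(\bx^{k}, \argmin F)^2]$ and $\EE[F(\bx^k) - F_*]$ then hold verbatim from the conclusion of Theorem \ref{theo:unifiedmain}.

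The proof therefore carries no genuine analytic difficulty: all the work — the master descent inequality of Proposition \ref{prop:unified}, the contraction estimate, and the passage from the distance bound to the function-value bound via the descent lemma — has already been carried out once and for all in the proof of Theorem \ref{theo:unifiedmain}. The only point requiring attention is the consistency of the substitution, specifically confirming that the composite quantity $A + MC = L(2 + M/n)$ is the one appearing both in the stepsize constraint and inside the first argument of the maximum defining $q$. Since this is pure bookkeeping, the main (modest) obstacle is simply to avoid a mismatch between $B/\rho = 2n$ and $A+MC = L(2+M/n)$ when recording the hypotheses.
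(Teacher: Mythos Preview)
Your proposal is correct and matches the paper's approach exactly: the paper states that Corollary~\ref{cor:saga} is obtained by applying Theorem~\ref{theo:unifiedmain} to Algorithm~\ref{algo:sagaprox} with the constants $A=2L$, $B=2$, $C=L/n$, $\rho=1/n$, $D\equiv 0$ supplied by Lemma~\ref{lem:20221004a}. There is nothing further in the paper's own argument beyond the parameter substitution you have described.
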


\begin{remark}
Now, we compare our results with those in \cite{defazio2016simple}  for Point-SAGA. In the smooth case, Point-SAGA converges linearly when  $f_i$  is differentiable with $L-$ Lipschitz gradient and $\mu-$strongly convex for every $i \in [n]$, whereas we require convexity of $f_i$ and only the PL condition to be satisfied by $F$. The work \cite{defazio2016simple} does not provide any rate for convex functions but instead does provide an ergodic sublinear convergence rate for nonsmooth and strongly convex functions.  While the rate is ergodic and sublinear for strongly convex functions, the algorithm converges with a constant step size.
\end{remark}

\end{section}
{
\begin{section}{Experiments}\label{sect:experiments}
In this section, we perform some experiments on synthetic data to compare the schemes presented and analyzed in Section \ref{sect:appli}. We compare the variance-reduced algorithms SAPA (Algorithm \ref{algo:sagaprox}) and SVRP (Algorithm \ref{algo:srvprox2}) with their vanilla counterpart SPPA (Algorithm \ref{algo:sppa}) and their explicit gradient counterparts: SAGA \cite{defazio2014saga}  and SVRG \cite{johnson2013accelerating}. The plots presented in the section are averaged of 10 or 5 runs, depending on the computational demand of the problem. The deviation from the average is also plotted. All the codes
are available on GitHub\footnote{https://github.com/cheiktraore/Variance-reduction-for-SPPA}.

\subsection{Comparing SAPA and SVRP to SPPA}
The cost of each iteration of SVRP is different from that of SPPA. More precisely, SVRP consists of two nested iterations, where each outer iteration requires a full explicit gradient and $m$ stochastic gradient computations.  We will consider the minimization of the sum of $n$ functions; therefore, we run SPPA for $N$ iterations, with $N = S(m + n + 1)$, where $S$ is the maximum number of outer iterations and $m$ is that of inner iterations as defined in Algorithm \ref{algo:srvprox2}. Let $s$ be the outer iterations counter. As for $m$, it is set at $2n$ like in \cite{johnson2013accelerating}. Then the step size $\alpha$ in SVRP is fixed at $1/(5L)$. For SAPA, we run it for $N - n$ iterations because there is a full gradient computation at the beginning of the algorithm. The SAPA step size is set to $1 / (5L)$. Finally, the SPPA step size is is chosen to be $\alpha_k = 1 / (k^{0.55})$.

For all three algorithms, we normalize the abscissa so  to present the convergence with respect to the outer iterates $(\tilde{\bx}^s)_{s \in \N}$ as in Theorem \ref{thm:20221003a}.  

The algorithms are run for $n \in \{1000, 5000, 10000\}$, $d$ fixed at $500$ and $cond(A^{\top\!}A)$, the condition number of $A^{\top\!}A$, at $100$.

\subsubsection{Logistic regression}\label{sect:logistic}
First, we consider experiments on the logistic loss:
\begin{equation}\label{prob:logistic}
    F(\bxs) = \frac{1}{n} \sum_{i=1}^n \log(1+\exp\{-b_i\langle \bas_i, \bxs \rangle\}),
\end{equation}
where $\bas_i$ is the $i^{th}$ row of a matrix $A \in \R^{n \times d}$ and $b_i\in \{-1, 1\}$ for all $i \in [n]$. If we set $f_i(\bxs) = \log(1+\exp\{-b_i\langle \bas_i, \bxs\rangle\})$ for all $i \in [n]$, then $F(\bxs) = \sum_{i=1}^n \frac{1}{n} f_i(\bxs)$.
The matrix $A$ is generated randomly. We first generate a matrix $M$ according to the standard normal distribution. A singular value decomposition gives $M = UDV^{\top\!}$. We set the smallest singular value to zero and rescale the rest of the vector of singular values so that the biggest singular value is equal to a given condition number and the second smallest to one. We obtain a new diagonal matrix $D^{\prime}$. Then $A$ is given by $UD^{\prime}V^{\top\!}$. In this problem, we have $L = 0.25 \times \max_i \|\bas_i\|_2^2$.  We compute the proximity operator of the logistic function $f_i$ according to the formula and the subroutine code available in \cite{chierchia2020proximity} and considering the rule of calculus of the proximity operator of a function composed with a linear map \cite[Corollary 24.15]{bauschke2017correction}.

Even though we don't have any theoretical result for SVRP in the convex case, we perform some experiments in this case as well. As it can readily be seen in Figure \ref{fig:logcompsvrpstochprox}, both SAPA and SVRP are better than SPPA. 
\subsubsection{Ordinary least squares (OLS)}\label{sect:ols}
To analyze the practical behavior of the proposed methods when the PL condition holds, we  test all the algorithms on an ordinary least squares problem: 
\begin{equation}
      \label{prob:ols}
    \minimize{ \bxs \in \R^d}{F(\bxs)} = \frac{1}{2n} \|A\bxs - \bb \|^2=\frac{1}{n}\sum_{i = 1}^{n}\frac{1}{2}\left(\langle\bas_i, \bxs \rangle - b_i \right)^2,
\end{equation}
where $\bas_i$ is the $i^{th}$ row of the matrix $A\in\mathbb{R}^{n\times d}$ and $b_i\in\mathbb{R}$ for all $i\in [n]$. In this setting, we have $F(\bxs) = \sum_{i=1}^n \frac{1}{n} f_i(\bxs)$ with 
$f_i(\bxs) = \frac{1}{2}\left(\langle\bas_i, \bxs \rangle - b_i \right)^2$ for all $i \in [n]$.

Here, the matrix $A$ was generated as in  Section \ref{sect:logistic}. The proximity operator is computed with the following closed form solution:
\[
\prox_{\alpha f_i}(\bx) = \bx + \alpha  \frac{b_i - \langle \bas_i, \bx\rangle}{\alpha\|\bas_i\|^2 + 1} \bas_i.
\]
Like in the logistic case, SVRP and SAPA exhibit faster convergence compared to SPPA. See Figure \ref{fig:olscompsvrpstochprox}.
\begin{figure}[ht]
     \centering
     \begin{subfigure}[b]{0.32\textwidth}
         \centering
         \includegraphics[width=\textwidth]
         {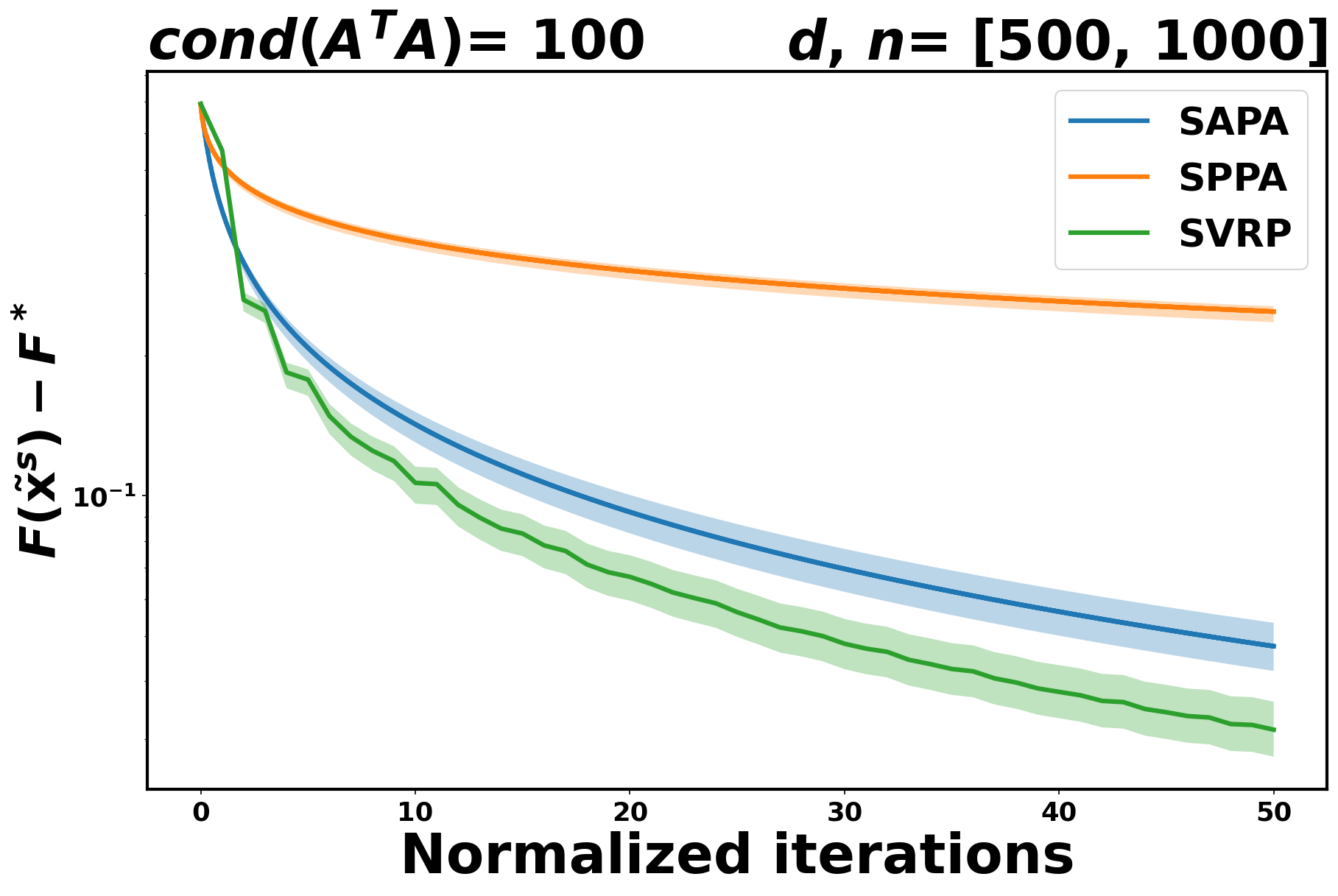}
     \end{subfigure}
     \begin{subfigure}[b]{0.32\textwidth}
         \centering
         \includegraphics[width=\textwidth]
         {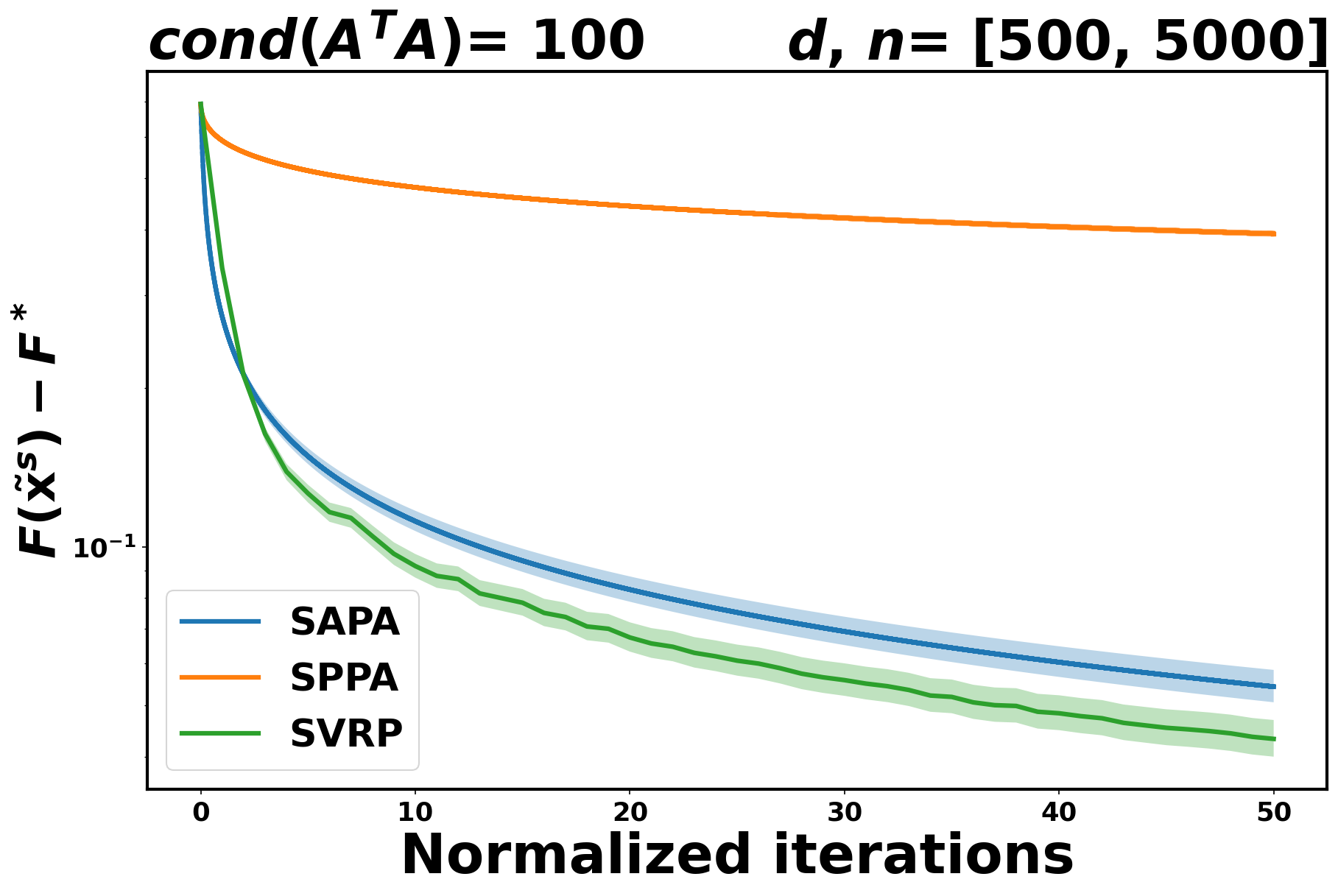}
     \end{subfigure}
    \begin{subfigure}[b]{0.32\textwidth}
         \centering
         \includegraphics[width=\textwidth]
         {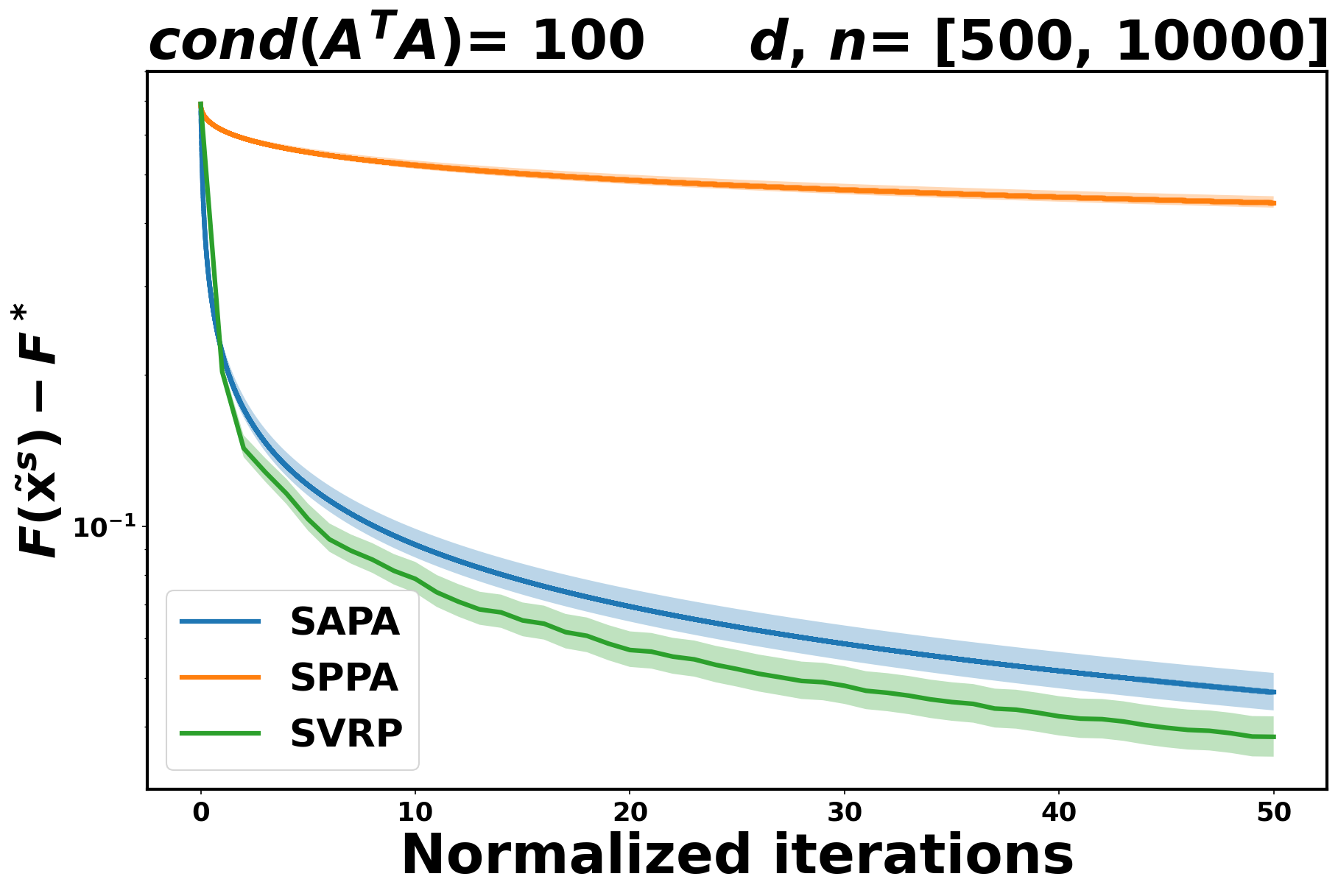}
     \end{subfigure}
    \caption{Evolution of $F(\tilde{\bx}^s)-F_*$, with respect to the normalized iterations counter, for different values of number of functions $n$.We compare the performance of SAPA (blue) and SVRP (green) to SPPA (orange) in the logistic regression case.
    }
    \label{fig:logcompsvrpstochprox}
\end{figure}

\begin{figure}[ht]
     \centering
     \begin{subfigure}[b]{0.32\textwidth}
         \centering
         \includegraphics[width=\textwidth]
         {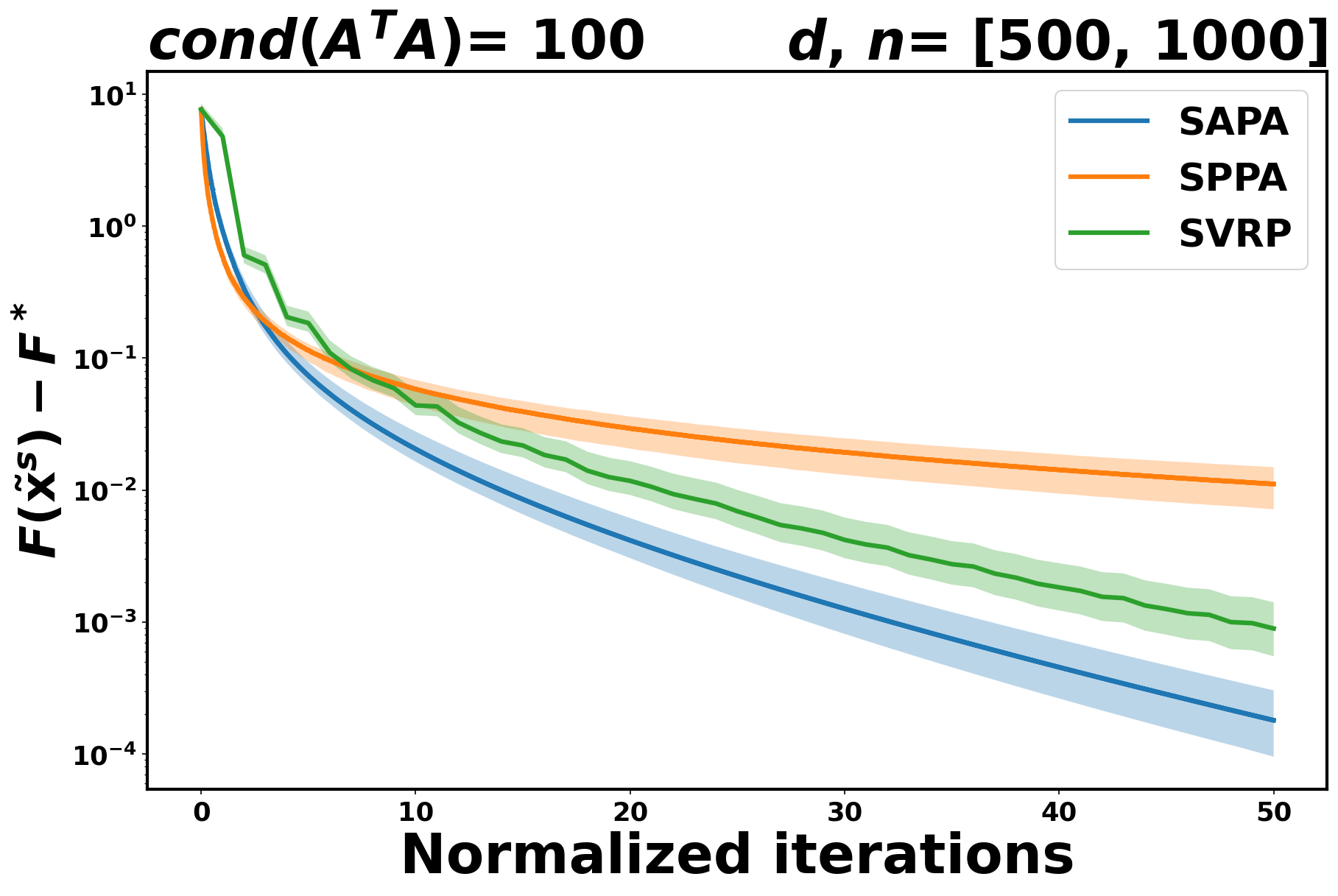}
     \end{subfigure}
     \begin{subfigure}[b]{0.32\textwidth}
         \centering
         \includegraphics[width=\textwidth]
         {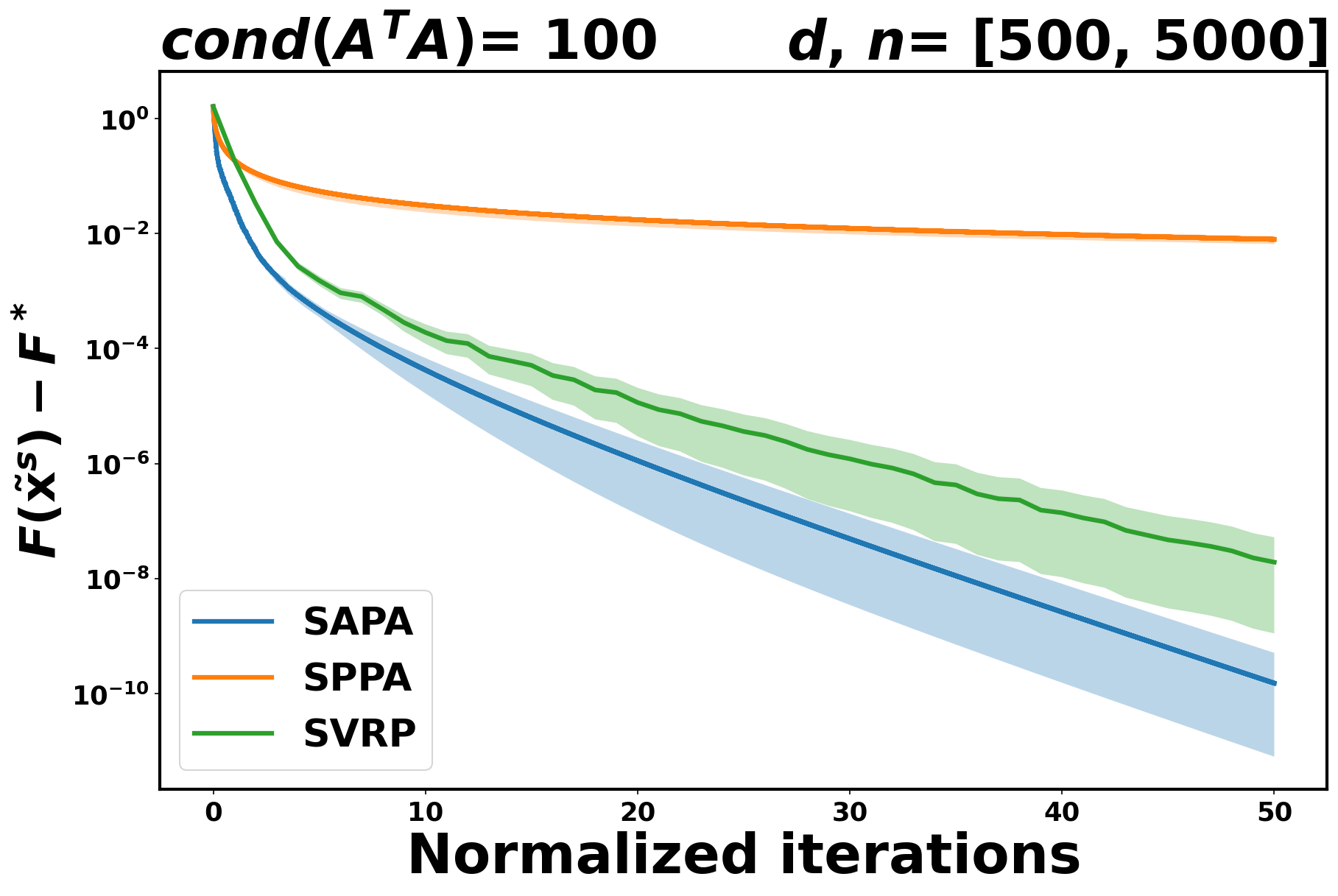}
     \end{subfigure}
    \begin{subfigure}[b]{0.32\textwidth}
         \centering
         \includegraphics[width=\textwidth]
         {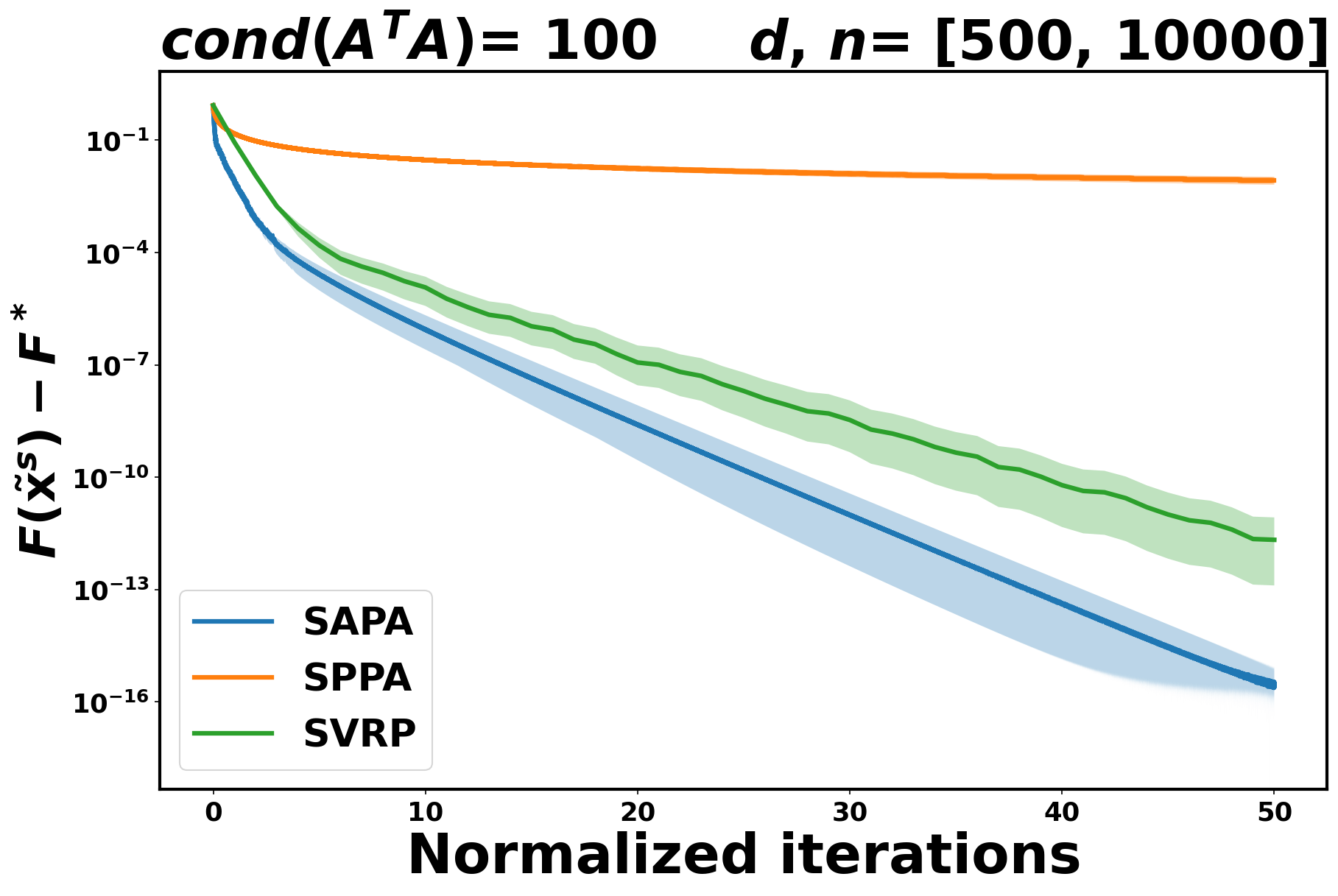}
     \end{subfigure}
    \caption{Evolution of $F(\tilde{\bx}^s)-F_*$, with respect to the normalized iterations counter, for different values of number of functions $n$. We compare the performance of SAPA (blue) and SVRP (green) to SPPA (orange) in the ordinary least squares case.
    }
    \label{fig:olscompsvrpstochprox}
\end{figure}

\subsection{Comparing SAPA to SAGA}
In the next experiments, we implement SAPA and SAGA to solve the least-squares problem \eqref{prob:ols} and logistic problem \eqref{prob:logistic}. The data are exactly as in Section \ref{sect:ols} and the matrix $A$ is generated as explained in Section \ref{sect:logistic}.

The aim of these experiments is twofold: on the one hand, we aim to establish the practical performance of the proposed method in terms of convergence rate and to compare it with the corresponding variance reduction gradient algorithm, on the other hand, we want to assess the stability of SAPA with respect to the step size selection. Indeed, SPPA  has been shown to be more robust and stable with respect to the choice of the step size than the stochastic  gradient algorithms, see \cite{asi2019stochastic, robbins1951stochastic, kim2022convergence}.
In Figure~\ref{fig:compsapasaga} for OLS and Figure~\ref{fig:compsapasagalogistic} for logistic, we plot the number of iterations that are needed for SAPA and SAGA to achieve an accuracy at least $F(x_{t})-F_{\ast} \leq \varepsilon=0.01$, along a fixed range of step sizes. The algorithms run for $n \in \{1000, 5000, 10000\}$, $d$ fixed at $500$ and $cond(A^{\top\!}A)$ at $100$.

Our experimental results show that if the step size is small enough, SAPA and SAGA  behave very similarly for both problems, see Figure \ref{fig:compsapasaga} and Figure \ref{fig:compsapasagalogistic}. This behavior is in agreement with the theoretical results that establish convergence rates for SAPA that are similar to those of SAGA. By increasing the step sizes, we observe that SAPA is more stable than SAGA: the range of step sizes for which SAPA converges is wider than that for SAGA. 

\begin{figure}[ht]
     \centering
     \begin{subfigure}[b]{0.32\textwidth}
         \centering
         \includegraphics[width=\textwidth]{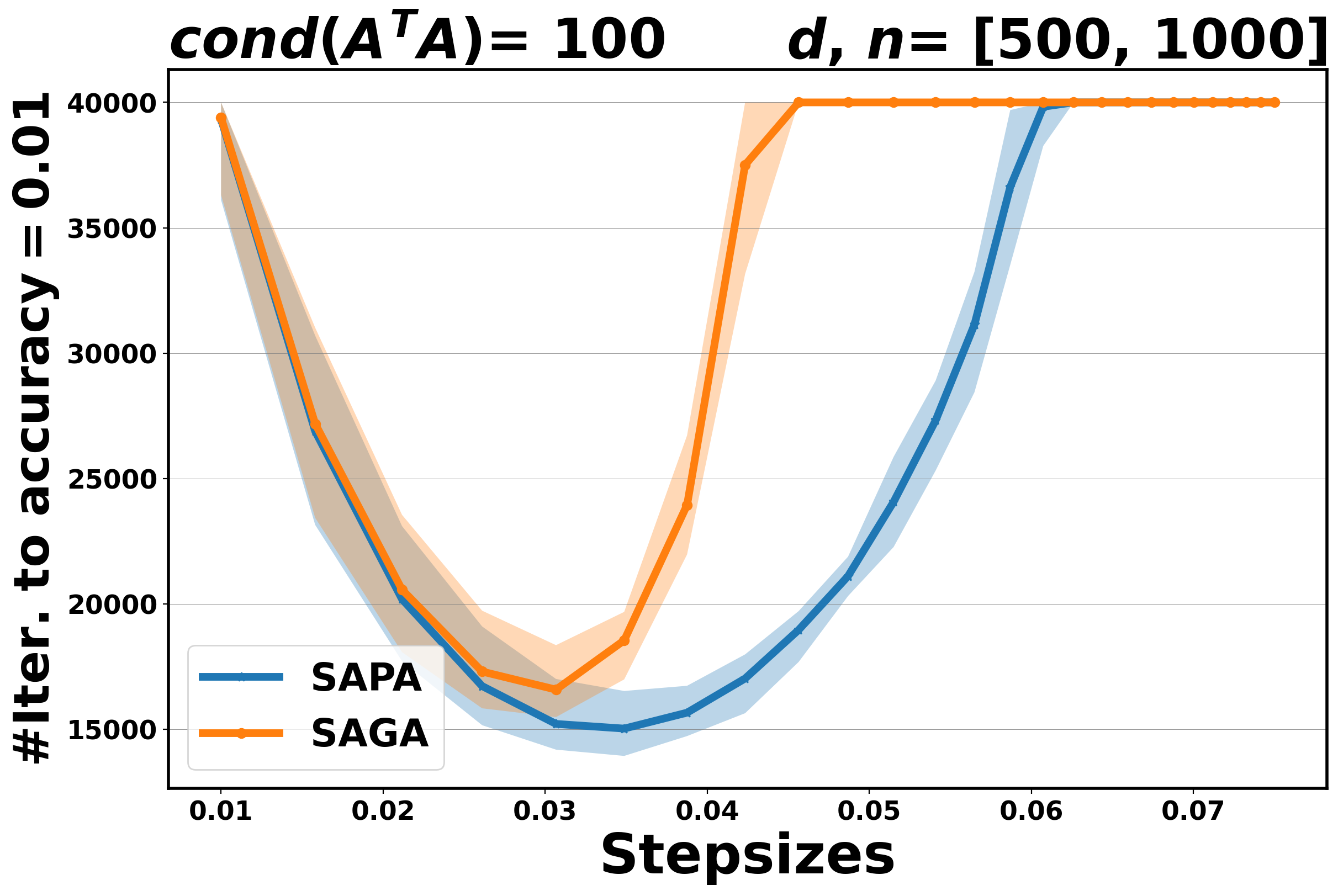}
     \end{subfigure}
     \begin{subfigure}[b]{0.32\textwidth}
         \centering
         \includegraphics[width=\textwidth]{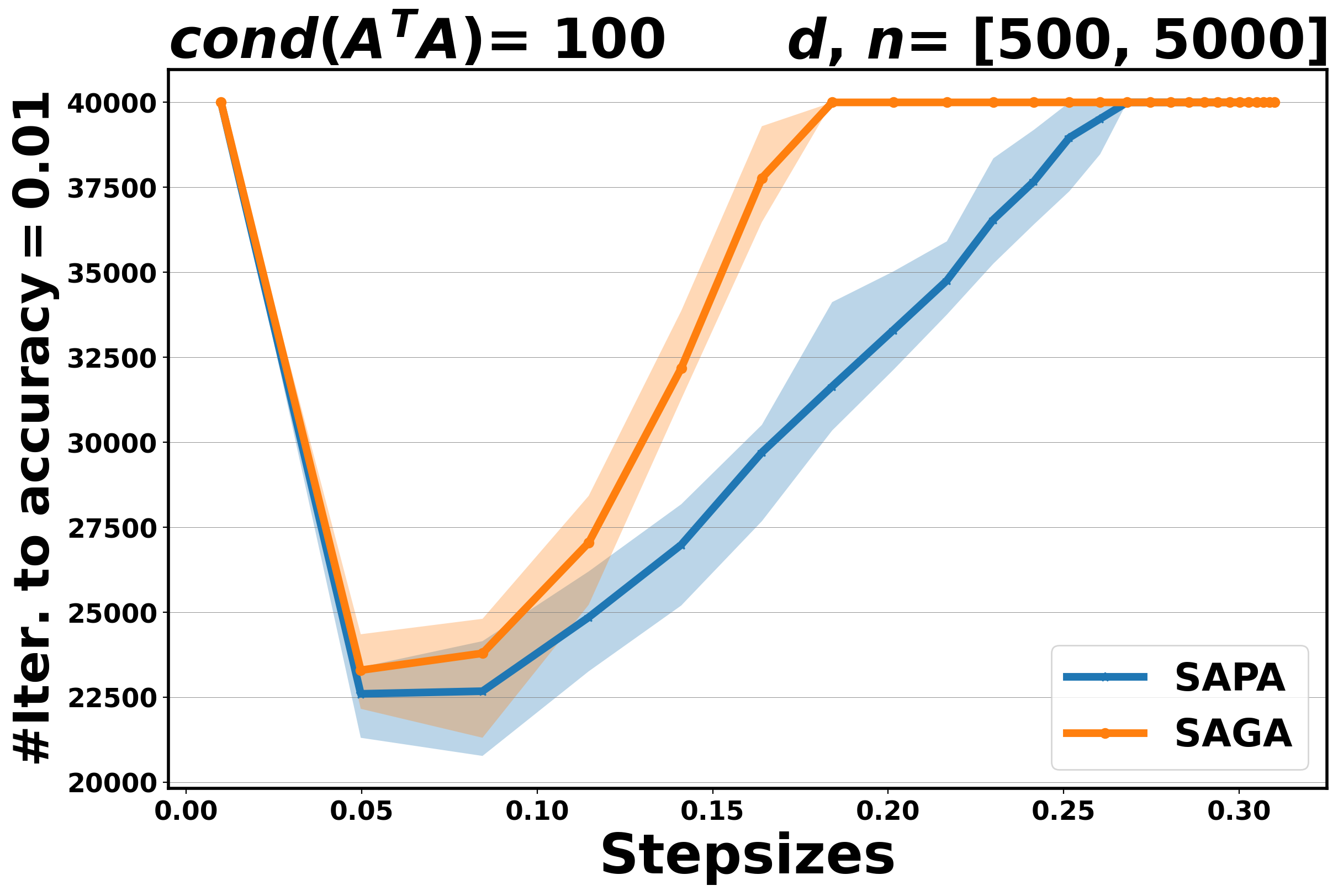}
     \end{subfigure}
    \begin{subfigure}[b]{0.32\textwidth}
         \centering
         \includegraphics[width=\textwidth]{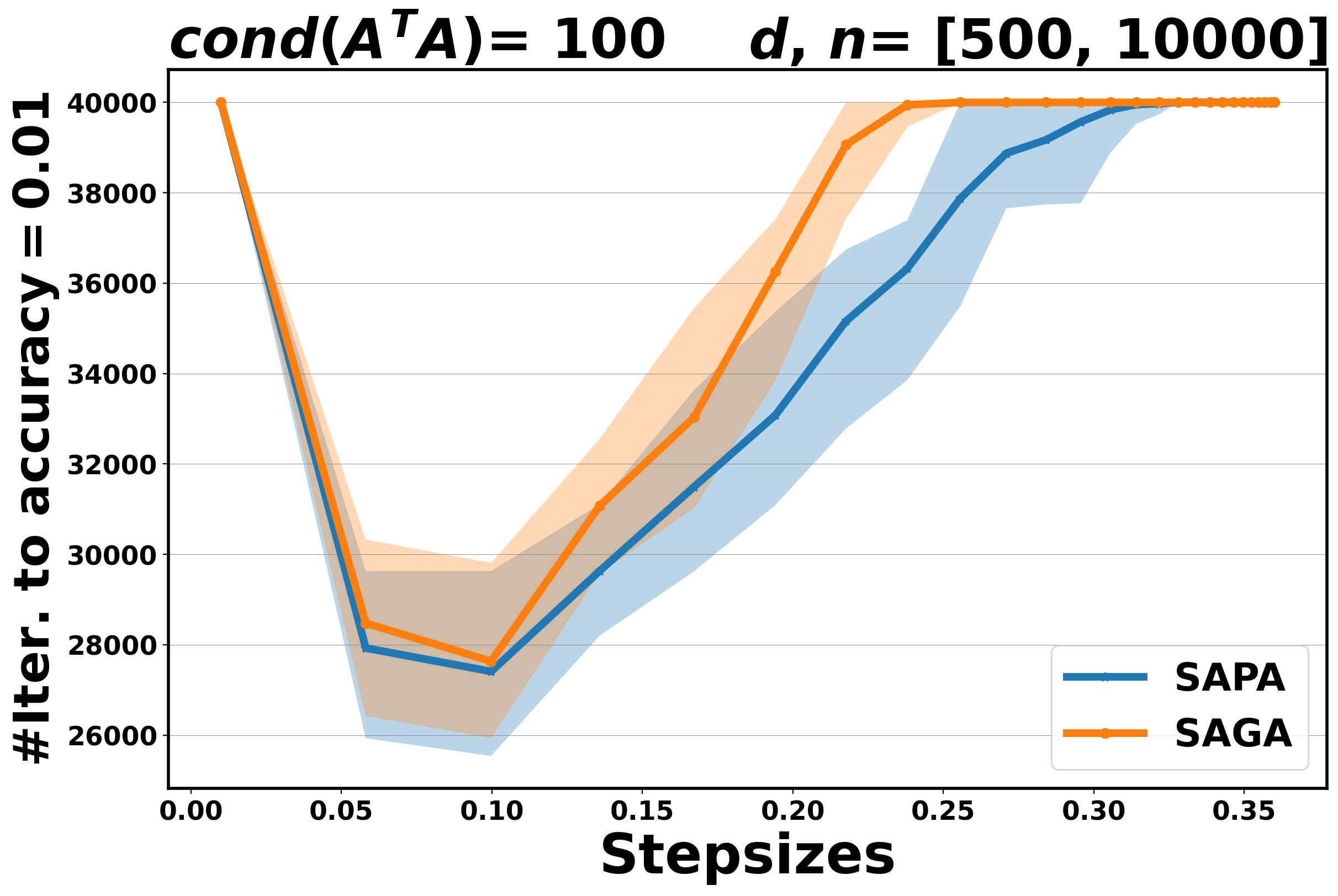}
     \end{subfigure}
    \caption{
    Number of iterations needed in order to achieve an accuracy of at least $0.01$ for different step sizes when solving an OLS problem. A cap is put at 40000 iterations. Here we compare SAPA (in blue) with SAGA (in orange) for different values of the number of functions n.
    }\label{fig:compsapasaga}
\end{figure}

\begin{figure}[ht]
     \centering
     \begin{subfigure}[b]{0.32\textwidth}
         \centering
         \includegraphics[width=\textwidth]{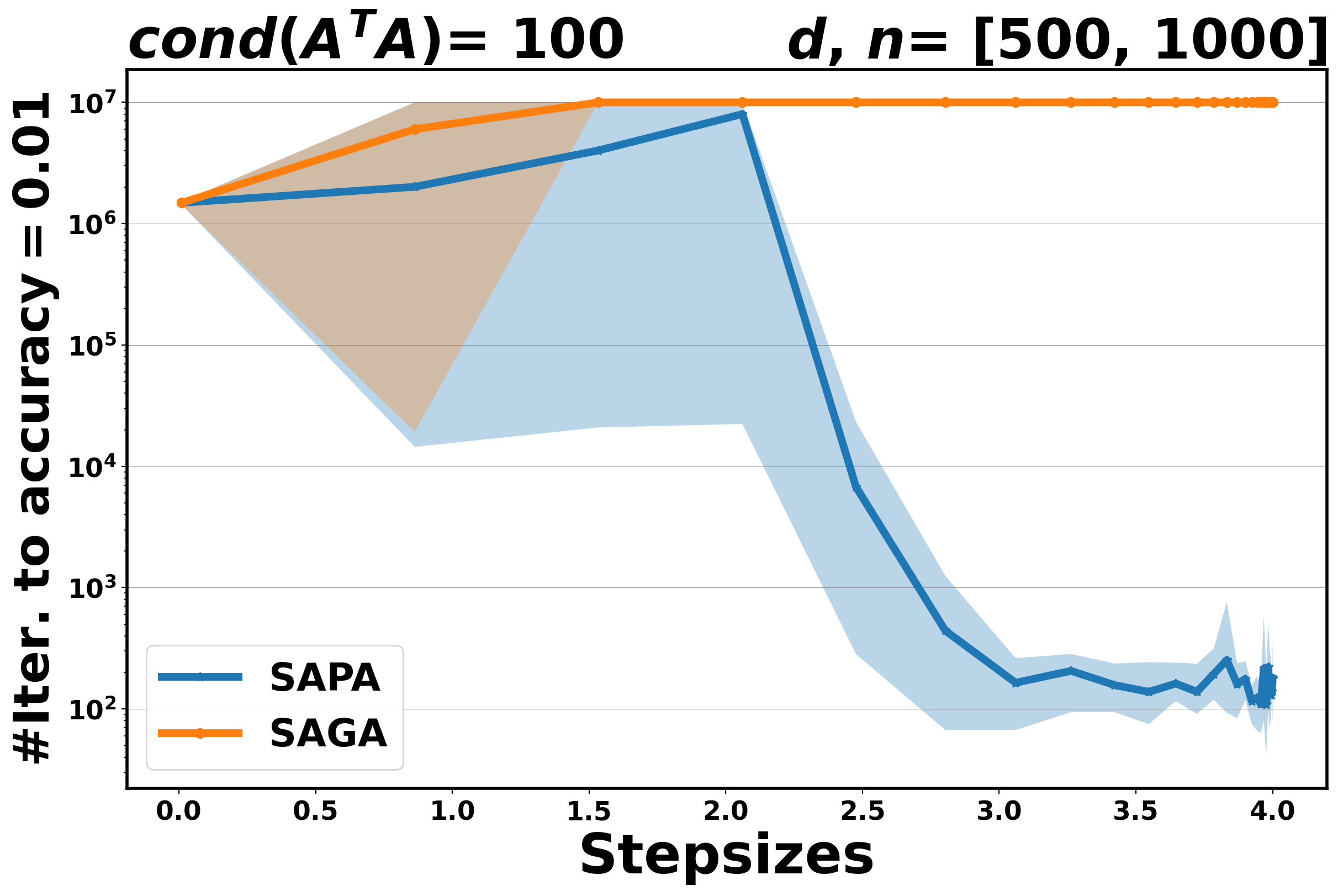}
     \end{subfigure}
     \begin{subfigure}[b]{0.32\textwidth}
         \centering
         \includegraphics[width=\textwidth]{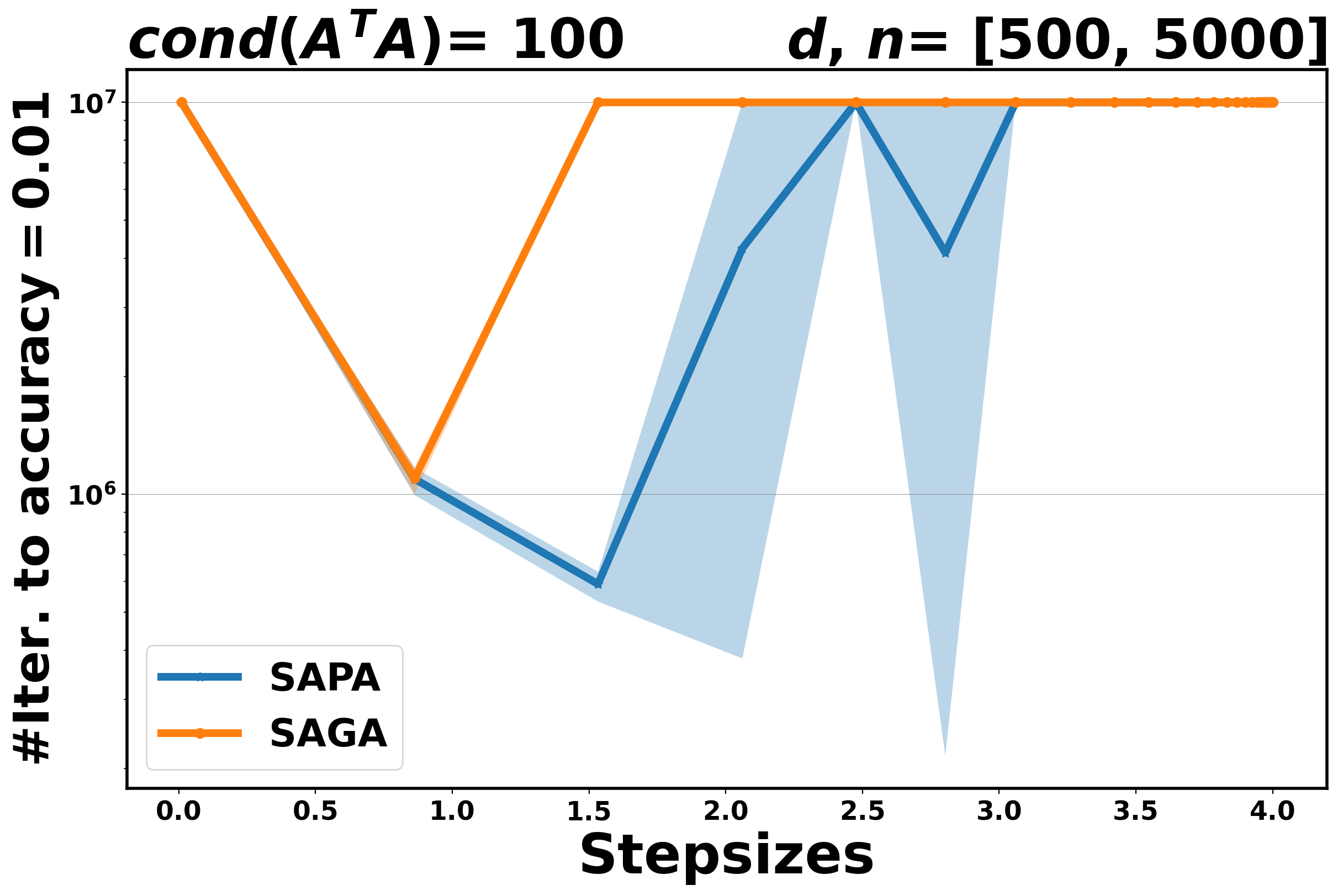}
     \end{subfigure}
    \begin{subfigure}[b]{0.32\textwidth}
         \centering
         \includegraphics[width=\textwidth]{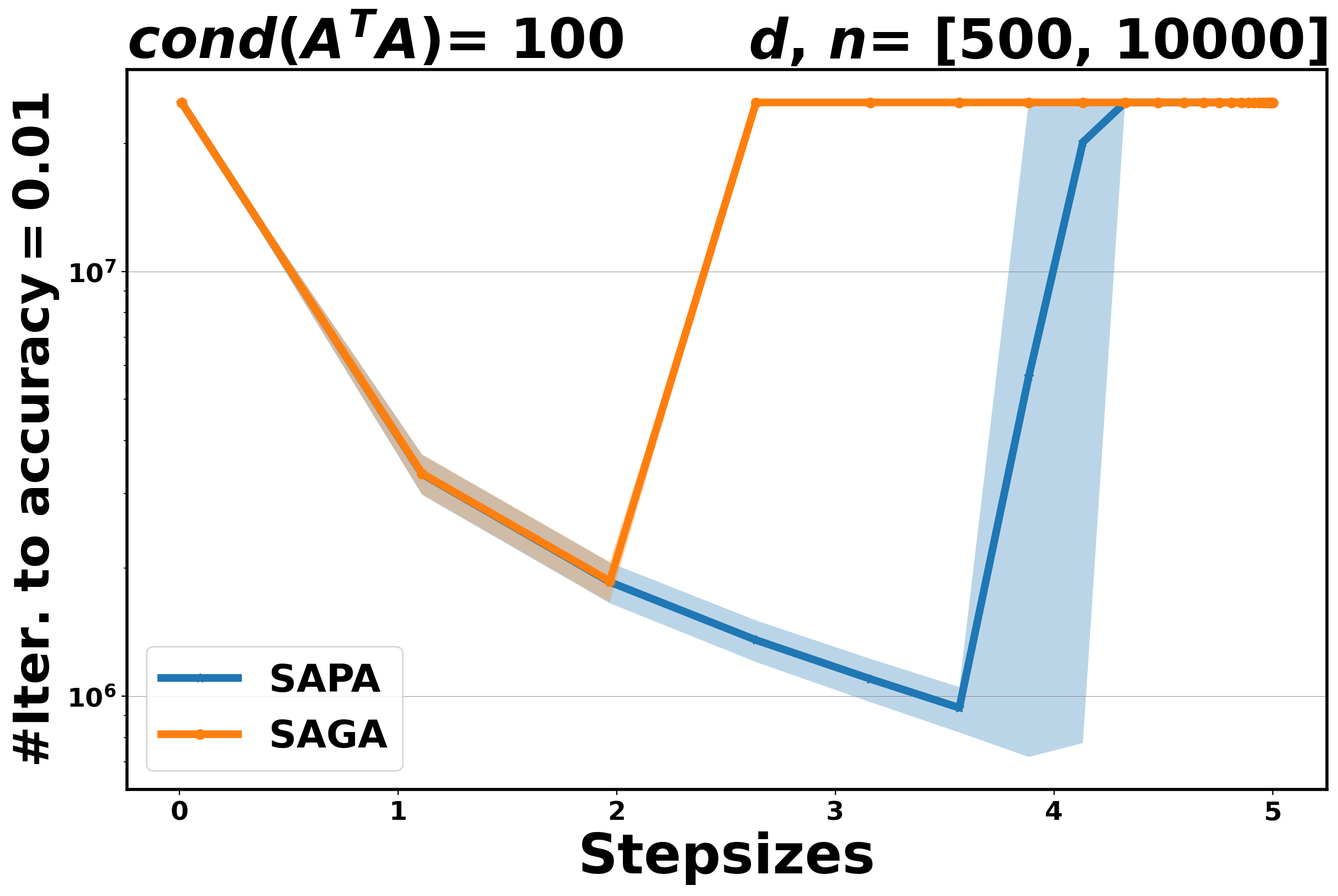}
     \end{subfigure}
    \caption{
    Number of iterations needed in order to achieve an accuracy of at least $0.01$ for different step sizes when solving a logistic problem. A cap is put at $2.5\times 10^{7}$ iterations. Here we compare SAPA (in blue) with SAGA (in orange) for different values of the number of functions n.
    }\label{fig:compsapasagalogistic}
\end{figure}
\subsection{Comparing SVRP to SVRG}
In this section, we compare the performance of Algorithm \ref{algo:srvprox2} (SVRP) with SVRG \cite{johnson2013accelerating} for the least-squares problem \eqref{prob:ols}, in terms of number of inner iterations (oracle calls) to achieve an accuracy at least $F(x_{t})-F_{\ast} \leq \varepsilon=0.01$, along a fixed range of step sizes. In this experiment, the condition number is set $\text{cond}(A^{\top}A)=100$,  $n=2000$ and the dimension of the objective variable $d$ varies in $\{1000,1500,2000, 3000\}$. The number of outer and inner iterations is set to $s=40$ and $m=1000$, respectively, and the maximum number of iterations (oracle calls) is set to $N=s(m+n+1)$.

Two main observations can be made. First, we note that when the step size is optimally tuned, SVRP performs always better than SVRG (overall, it requires fewer iterations to achieve $\varepsilon=0.01$ accuracy). Secondly, regarding the stability of the two methods with respect to the choice of the step size, while SVRG seems to be a bit more stable for easy problems ($d=1000$), the situation is reversed for harder problems (e.g., $d\in \{2000,3000\}$), where SVRP is more robust. The results are reported in Figure \ref{fig:compsvrpsvrg}.

\begin{figure}[ht]
     \centering
     \begin{subfigure}[b]{0.44\textwidth}
         \centering
         \includegraphics[width=\textwidth]{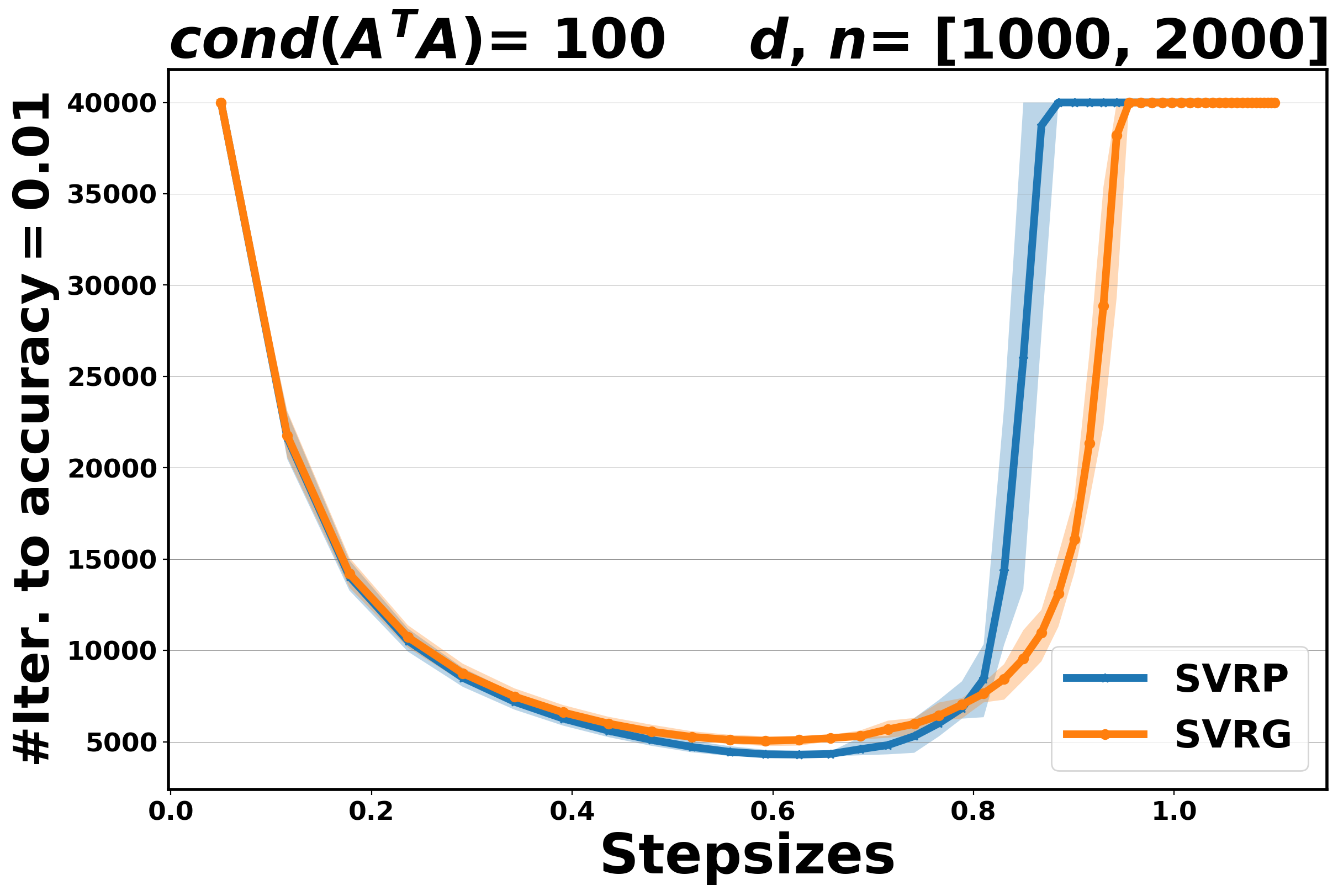}
     \end{subfigure}
          \begin{subfigure}[b]{0.44\textwidth}
         \centering
         \includegraphics[width=\textwidth]{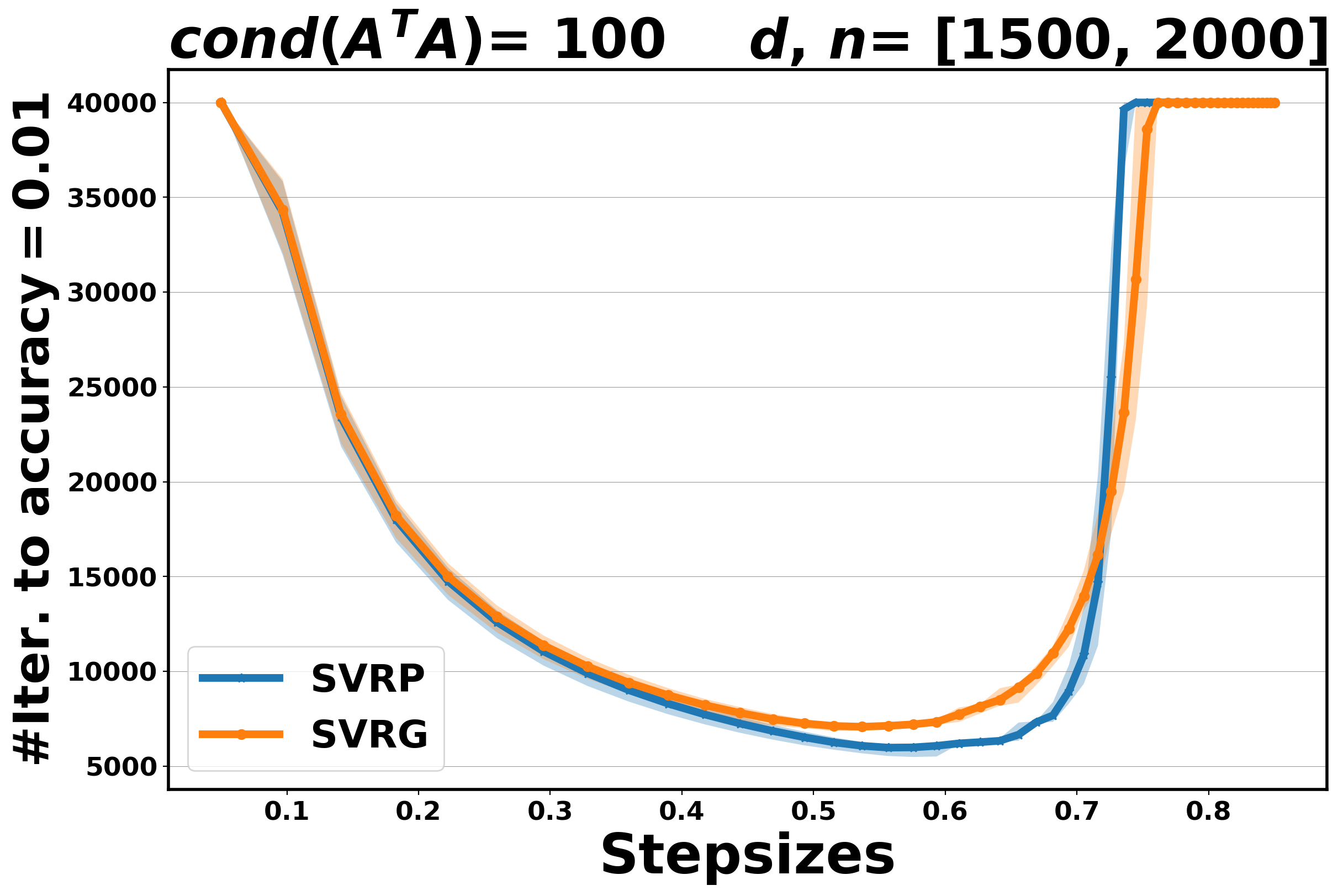}
     \end{subfigure}
     \begin{subfigure}[b]{0.44\textwidth}
         \centering
         \includegraphics[
         width=\textwidth]{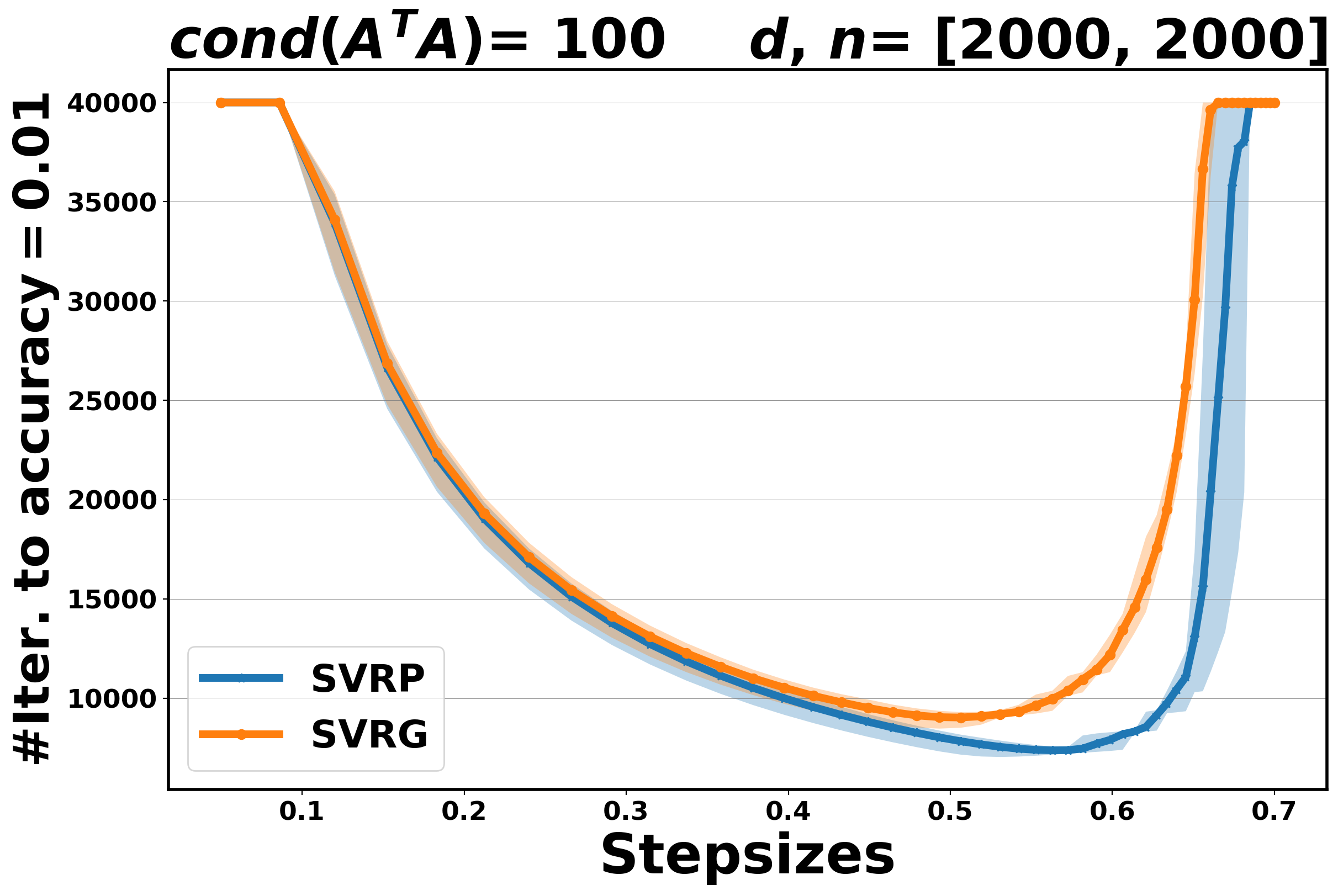}
     \end{subfigure}
     \begin{subfigure}[b]{0.44\textwidth}
         \centering
         \includegraphics[
         width=\textwidth]{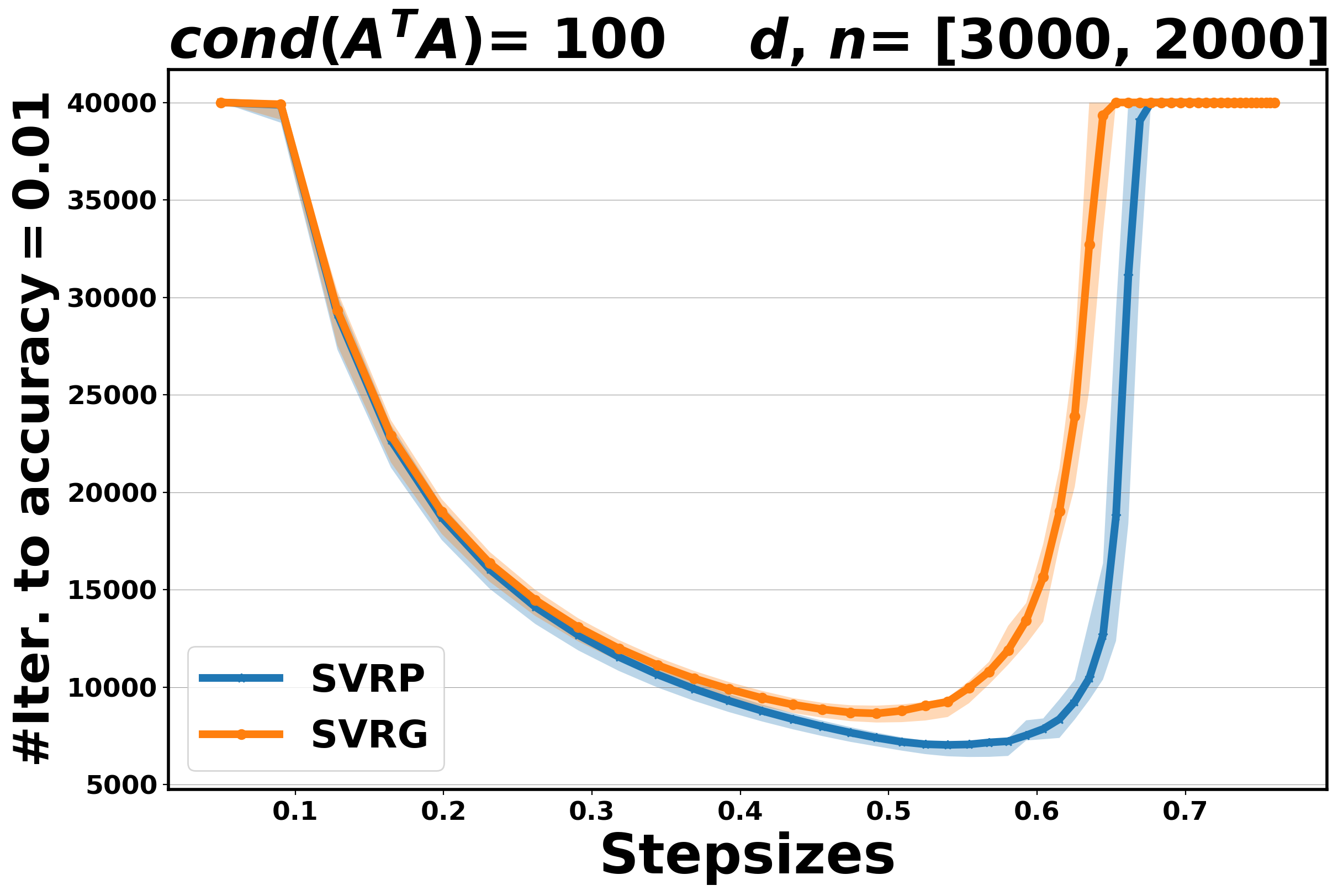}
     \end{subfigure}
    \caption{ Number of iterations needed in order to achieve an accuracy of at least $0.01$ for different step sizes when solving an OLS problem. Here we compare SVRP (in blue) with SVRG (in orange) for four different values of the dimension $d$ in problem \eqref{prob:ols}, starting from $d=1000$ (easy case) to $d=3000$ (hard case).
    }\label{fig:compsvrpsvrg}
\end{figure}

\end{section}
}
\begin{section}{Conclusion and future work}\label{sect:conclusion}

In this work, we proposed a general scheme of variance reduction, based on the stochastic proximal point algorithm (SPPA).
In particular, we introduced some new variants of SPPA coupled with various variance reduction strategies,  namely SVRP, SAPA and L-SVRP (see Section \ref{sect:appli}) for which we provide improved convergence results compared to the vanilla version.  As the experiments suggest, the convergence behavior of these stochastic proximal variance-reduced methods is more stable with respect to the step size, rendering them advantageous over their explicit-gradient counterparts.  

Since in this paper, differentiability of the summands is required, an interesting line of future research includes the extension of these results to the nonsmooth case and the model-based framework, as well as the consideration of inertial (accelerated) variants leading, hopefully, to faster rates.
\end{section}
\appendix
\noindent {\huge \textbf{Appendices}}
\section{Additional proofs} 
In this appendix we provide the proofs of some auxiliary lemmas used in Sections \ref{sect:results} and \ref{sect:appli} in the main core of the paper.
\label{app:proof}
\vspace{2ex}
\subsection{Proofs of Section \texorpdfstring{$\mathbf{3}$}{}}
\label{app:proofsect3}
\noindent
\textbf{Proof of Proposition \ref{prop:unified}.}
Notice that from \eqref{eq:20220915c}, $\bx^{k+1}$ can be identified as
\begin{equation}
    \bx^{k+1}=\argmin_{\bxs \in \HHs}\{\underbrace{f_{i_{k}}(\bx)-\scalarp{\bg^k,\bxs-\bx^{k}}}_{:=R_{i_{k}}(\bxs)}+\frac{1}{2\alpha}\norm{\bxs-\bx^{k}}^2\}. \nonumber
\end{equation}

Let $\bxs \in \HHs$. The function \(R_{i_{k}}(\bxs)=f_{i_{k}}(\bxs)-\scalarp{\bg^k,\bxs-\bx^{k}}\) is convex and continuously differentiable with \(\nabla R_{i_{k}}(\bxs)=\nabla f_{i_{k}}(\bxs)-\bg^k\)

By convexity of $R_{i_{k}}$, we have:
\begin{equation}\label{eq:20220915a}
    \begin{aligned}
    \scalarp{\nabla R_{i_{k}}(\bx^{k+1}) , \bxs-\bx^{k+1}} & \leq R_{i_{k}}(\bxs)-R_{i_{k}}(\bx^{k+1}) \\
    &=f_{i_{k}}(\bxs)-f_{i_{k}}(\bx^{k+1})-\scalarp{\bg^k , \bxs-\bx^{k}} + \scalarp{\bg^k , \bx^{k+1}-\bx^{k}}.
    \end{aligned}
\end{equation}
Since \(\nabla R_{i_{k}}(\bx^{k+1})=\frac{\bx^{k}-\bx^{k+1}}{\alpha}\), from \eqref{eq:20220915a}, it follows:
\begin{equation*}
  \frac{1}{\alpha}\scalarp{\bx^{k}-\bx^{k+1} , \bxs-\bx^{k+1}} \leq f_{i_{k}}(\bxs)-f_{i_{k}}(\bx^{k+1})-\scalarp{\bg^k , \bxs-\bx^{k}} + \scalarp{\bg^k , \bx^{k+1}-\bx^{k}}.
\end{equation*}
By using the identity \(\scalarp{\bx^{k}-\bx^{k+1} , \bxs-\bx^{k+1}}=\frac{1}{2}\norm{\bx^{k+1}-\bx^{k}}^{2} + \frac{1}{2}\norm{\bx^{k+1}-\bxs}^{2} -\frac{1}{2}\norm{\bx^{k}-\bxs}^{2}\) in the previous inequality, we find
\begin{align}
\label{eq:20220915f}
-\scalarp{\bg^k, \bx^{k+1}-\bx^{k}} + &f_{i_{k}}(\bx^{k+1})- f_{i_{k}}(\bxs) + \frac{1}{2\alpha}\norm{\bx^{k+1}-\bx^{k}}^{2} \nonumber\\
        & \leq
         \frac{1}{2\alpha}\norm{\bx^{k}-\bxs}^{2} - \frac{1}{2\alpha}\norm{\bx^{k+1}-\bxs}^{2}  -\scalarp{\bg^k , \bxs-\bx^{k}}.
\end{align}
Recalling the definition of $\bv_k$ in \eqref{defvk}, we can lower bound the left hand term as follows:
\begin{align}
-\scalarp{\bg^k & , \bx^{k+1}-\bx^{k}} + f_{i_{k}}(\bx^{k+1})- f_{i_{k}}(\bxs) + \frac{1}{2\alpha}\norm{\bx^{k+1}-\bx^{k}}^{2} \nonumber\\ 
&= -\scalarp{\bg^k , \bx^{k+1}-\bx^{k}} + f_{i_{k}}(\bx^{k+1})-f_{i_{k}}(\bx^{k})+ \frac{1}{2\alpha}\norm{\bx^{k+1}-\bx^{k}}^{2}\nonumber\\
&\qquad +f_{i_{k}}(\bx^{k})- f_{i_{k}}(\bxs) \nonumber\\
& \geq -\scalarp{\bg^k , \bx^{k+1}-\bx^{k}} + \scalarp{\nabla f_{i_{k}}(\bx^k) ,\bx^{k+1}-\bx^{k}}+ \frac{1}{2\alpha}\norm{\bx^{k+1}-\bx^{k}}^{2}\nonumber\\
&\qquad +f_{i_{k}}(\bx^{k})- f_{i_{k}}(\bxs) \nonumber\\
&= \scalarp{-\bg^k + \nabla f_{i_{k}}(\bx^k) , \bx^{k+1}-\bx^{k}} +  \frac{1}{2\alpha}\norm{\bx^{k+1}-\bx^{k}}^{2}\nonumber\\
&\qquad +f_{i_{k}}(\bx^{k})- f_{i_{k}}(\bxs)\nonumber\\
&= \scalarp{\bv^k , \bx^{k+1}-\bx^{k}} +  \frac{1}{2\alpha}\norm{\bx^{k+1}-\bx^{k}}^{2}\nonumber\\
&\label{eq:20220915d}\qquad +f_{i_{k}}(\bx^{k})- f_{i_{k}}(\bxs),
\end{align}
where in the first inequality, we used the convexity of $f_i$, for all $i \in [n]$. Since
\begin{equation*}
    \scalarp{\bv^k , \bx^{k+1}-\bx^{k}} +  \frac{1}{2\alpha}\norm{\bx^{k+1}-\bx^{k}}^{2} = \left\|\frac{\sqrt{\alpha}}{\sqrt{2}}\bv^k + \frac{1}{\sqrt{2\alpha}}(\bx^{k+1}-\bx^{k})\right\|^2 - \frac{\alpha}{2}\|\bv^k\|^2.
\end{equation*}
From \eqref{eq:20220915d}, it follows
\begin{align}\label{eq:20230215a}
    -\scalarp{\bg^k , \bx^{k+1}-\bx^{k}} + &f_{i_{k}}(\bx^{k+1})- f_{i_{k}}(\bxs) + \frac{1}{2\alpha}\norm{\bx^{k+1}-\bx^{k}}^{2}\nonumber \\
    &\geq\left\|\frac{\sqrt{\alpha}}{\sqrt{2}}\bv^k + \frac{1}{\sqrt{2\alpha}}(\bx^{k+1}-\bx^{k})\right\|^2  - \frac{\alpha}{2}\|\bv^k\|^2 + f_{i_{k}}(\bx^{k})- f_{i_{k}}(\bxs) \nonumber \\
    &= \frac{\alpha}{2} \left\|\nabla f_{i_{k}}(\bx^{k}) - \nabla f_{i_{k}}(\bx^{k+1}) \right\|^2 - \frac{\alpha}{2}\|\bv^k\|^2 + f_{i_{k}}(\bx^{k})- f_{i_{k}}(\bxs).
\end{align}
By using \eqref{eq:20230215a} in \eqref{eq:20220915f}, we obtain
\begin{align}
\label{eq:20220915e}
- \frac{\alpha}{2}\|\bv^k\|^2 + f_{i_{k}}(\bx^{k})- f_{i_{k}}(\bxs) &\leq
         \frac{1}{2\alpha}\norm{\bx^{k}-\bxs}^{2} - \frac{1}{2\alpha}\norm{\bx^{k+1}-\bxs}^{2}  -\scalarp{\bg^k, \bxs-\bx^{k}}.
\end{align}
Now, define $\EE_k[\cdot] = \EE[\cdot \,\vert\, \Fk_k]$,
where $\Fk_k$ is defined in Assumptions~ \ref{ass:variance} and is such that $\bx^k$ is $\Fk_k$-measurable and $i_k$ is independent of $\Fk_k$. Thus, taking the conditional expectation of inequality \ref{eq:20220915e} and rearranging the terms, we have
\begin{align*}
     \EE_k[\norm{\bx^{k+1}-\bxs}^{2}] &\leq \norm{\bx^{k}-\bxs}^{2}  -2 \alpha\EE_{k}\big[f_{i_{k}}(\bx^{k})- f_{i_{k}}(\bxs)\big] + \alpha^2\EE_{k}\|\bv^k\|^2\\
     &= \norm{\bx^{k}-\bxs}^{2} -2 \alpha (F(\bx^{k})- F(\bxs))   + \alpha^2\EE_{k}\|\bv^k\|^2.
\end{align*}
 Replacing $\bxs$ by $\by^{k}$, with $ \by^{k} \in \argmin F$ such that $\norm{\bx^{k}-\by^{k}} = \dist(\bx^{k}, \argmin F)$  and using Assumption \ref{ass:b2}, we get
\begin{align} \label{eq:20230223b}
    \EE_{k}\norm{\bx^{k+1}-\by^{k}}^{2} &\leq \norm{\bx^{k}-\by^{k}}^{2} \nonumber \\
     &\qquad -2 \alpha(F(\bx^{k})- F_*) + \alpha^2\left(2A(F(\bx^k) - F_*) + B\sigma_k^2 
     + D\right)\; a.s.\nonumber \\
     &= \norm{\bx^{k}-\by^{k}}^{2} 
     + \alpha^2 D\nonumber\\
     &\qquad -2 \alpha(1-\alpha A)(F(\bx^{k})- F_*) + \alpha^2 B\sigma_k^2\;\; a.s.
\end{align}
By taking the total expectation in \eqref{eq:20230223b} and using \ref{ass:b3}, for all $M > 0$ we have
\begin{align}
    \EE\norm{\bx^{k+1}-\by^{k}}^{2} + \alpha^2M\EE[\sigma_{k+1}^2] &\leq \EE\norm{\bx^{k}-\by^{k}}^{2} 
    + \alpha^2 \EE[D] \nonumber \\
     &\qquad -2 \alpha(1-\alpha A)\EE[F(\bx^{k})- F_*] + \alpha^2 B\EE[\sigma_k^2] \nonumber \\
     &\qquad + \alpha^2M(1-\rho) \EE[\sigma_k^2] + 2\alpha^2MC\EE[F(\bx^k) - F_*] 
    \nonumber \\
     &= \EE\norm{\bx^{k}-\by^{k}}^{2} 
     + \alpha^2 \EE[D]\nonumber \\
     &\qquad -2 \alpha\left[1-\alpha (A+ MC)\right]\EE[F(\bx^{k})- F_*] \nonumber \\
     &\qquad + \alpha^2 \left[M+B-\rho M\right]\EE[\sigma_k^2] \nonumber \\
     &= \EE[\dist(\bx^{k}, \argmin F)^2] 
     + \alpha^2 \EE[D]\nonumber \\
     &\qquad -2 \alpha\left[1-\alpha (A+ MC)\right]\EE[F(\bx^{k})- F_*] \nonumber \\
     &\qquad + \alpha^2 \left[M+B-\rho M\right]\EE[\sigma_k^2]. \nonumber
\end{align}
Since $ \EE[\dist(\bx^{k+1}, \argmin F)^2] \leq \EE\norm{\bx^{k+1}-\by^{k}}^{2}$, the statement follows.
\hfill \qed \\
\vspace{2ex}

\subsection{Proofs of Section \texorpdfstring{$\mathbf{4}$}{}}
\label{app:proofsect4}
\noindent
\textbf{Proof of Lemma \ref{lem:20220927a}.}
Given any $i \in [n]$,
since $f_i$ is convex and $L$-Lipschitz smooth, it is a standard fact (see \cite[Equation 2.1.7]{nesterov2003introductory}) that 
\begin{equation*}
    (\forall\, \bxs, \bys \in \HHs)\quad
\big\|\nabla f_{i}(\bxs)-\nabla f_{i}(\bys)\big\|^{2} \leq 2 L\big[f_{i}(\bxs)-f_{i}(\bys)- \scalarp{\nabla f_{i}(\bys),\bxs-\bys}\big].
\end{equation*}
By summing the above inequality over $i=1, \ldots, n$, 
we have
\begin{equation*}
\sum_{i=1}^{n} \frac{1}{n} \big\|\nabla f_{i}(\bxs)-\nabla f_{i}(\bys)\big\|^{2} \leq 2 L\big[F(\bxs)-F(\bys)\big] -2L \scalarp{\nabla F(\bys), \bxs - \bys}
\end{equation*}
and hence if we suppose that $\nabla F\left(\bys\right)=0$, then we obtain
\begin{equation}
\label{eq:20220927b}
\sum_{i=1}^{n} \frac{1}{n} \big\|\nabla f_{i}(\bxs)-\nabla f_{i}(\bys)\big\|^{2} \leq 2 L\big[F(\bxs)-F(\bys)\big] .
\end{equation}
Now, let $s \in \N$ and $k \in \{0,\dots, m-1\}$
and set, for the sake of brevity, $j_k = i_{sm+k}$.
Defining $\Fk_{k} = \Fk_{s,k} = \sigma(\xi_0, \dots, \xi_{s-1}, i_0, \dots, i_{sm+k-1})$ and
$\EE_k[\cdot] = \EE[\cdot \,\vert\, \Fk_k]$, and recalling that
\begin{equation*}
    \bv^k = \nabla f_{j_k}(\bx^k) - \nabla f_{j_k}(\tilde{\bx}^s) + \nabla F(\tilde{\bx}^s)
    \quad\text{and}\quad
    \tilde{\by}^s = P_{\argmin F}(\tilde{\bx}^s),
\end{equation*}
we obtain
\begin{align}
\EE_{k} \big[\| \bv^{k}\|^{2}\big] &\leq  2 \EE_{k}\big\|\nabla f_{j_{k}}(\bx^{k})-\nabla f_{j_{k}}(\tilde{\by}^{s})\big\|^{2}+2 \EE_{k}\big\|[\nabla f_{j_{k}}(\tilde{\bx}^{s})-\nabla f_{j_{k}}(\tilde{\by}^{s})]-\nabla F(\tilde{\bx}^{s})\big\|^{2} \nonumber \\[1ex]
&= 2 \EE_{k} \big\|[\nabla f_{j_{k}}(\tilde{\bx}^{s})-\nabla f_{j_{k}}(\tilde{\by}^{s})\big] -\EE_{k}[\nabla f_{j_{k}}(\tilde{\bx}^{s})-\nabla f_{j_{k}}(\tilde{\by}^{s})]\big \|^{2} \nonumber \\[1ex]
& \quad + 2 \EE_{k}\big\|\nabla f_{j_{k}}(\bx^{k})-\nabla f_{j_{k}}(\tilde{\by}^{s})\big\|^{2}  \nonumber \\[1ex]
&= 2 \EE_{k}\big\|\nabla f_{j_{k}}(\tilde{\bx}^{s})-\nabla f_{j_{k}}(\tilde{\by}^{s})\big\|^{2} - 2 \big\|\EE_{k}[\nabla f_{j_{k}}(\tilde{\bx}^{s})-\nabla f_{j_{k}}(\tilde{\by}^{s})]\big\|^2 \nonumber\\[1ex]
& \qquad + 2 \EE_{k}\big\|\nabla f_{j_{k}}(\bx^{k})-\nabla f_{j_{k}}(\tilde{\by}^{s})\big\|^{2} \nonumber \\[1ex]
&= 2 \EE_{k}\big\|\nabla f_{j_{k}}(\bx^{k})-\nabla f_{j_{k}}(\tilde{\by}^{s})\big\|^{2}+2 \EE_{k}\big\|\nabla f_{j_{k}}(\tilde{\bx}^{s})-\nabla f_{j_{k}}(\tilde{\by}^{s})\big\|^{2} \nonumber \\[1ex]
& \qquad -2\|\nabla F(\tilde{\bx}^{s})\|^2 \nonumber \\
&\leq 4 L\big( F(\bx^{k})-F(\tilde{\by}^{s})\big) + 2\sigma_k^2 -2\|\nabla F(\tilde{\bx}^{s})\|^2, \nonumber
\end{align}
where in the last inequality, we used \eqref{eq:20220927b} with $\bys=\tilde{\by}^s$.
\hfill \qed \\

\vspace{2ex}

\noindent{\textbf{Proof of Lemma \ref{lem:20221103a}} The proof of equation \eqref{eq:20221103a}  is equal to that of \eqref{eq:20220927c} in Lemma \ref{lem:20220927a} using $\bu^k$ instead of $\tilde{\bx}^s$ 
and $\EE_k[\cdot] = \EE[\cdot\,\vert\, \Fk_k]$ with $\Fk_k = \sigma(i_0, \dots, i_{k-1}, \varepsilon^0, \dots, \varepsilon^{k-1})$, which ensures that $\bx^k$ and $\bu^k$ are $\Fk_k$-measurables, and $i_k$ and $\varepsilon^k$ are independent of $\Fk_k$.}
Concerning \eqref{eq:20221103b}, we note that
\begin{equation*}
    \sigma_{k+1}^2 = \frac{1}{n} \sum_{i=1}^n \big\lVert \nabla f_i((1-\varepsilon^k)\bu^k + \varepsilon^k \bx^k) -
\nabla f_i (P_{\argmin F} ((1-\varepsilon^k)\bu^k + \varepsilon^k \bx^k))\big\rVert^2.
\end{equation*}
Therefore, we have
\begin{align*}
\EE_{k}\left[\sigma^2_{k+1}\right] 
& = (1-p) \sigma^2_{k}+p \frac{1}{ n} \sum_{i=1}^{n}\big\|\nabla f_i(\bx^{k})-\nabla f_i(P_{\argmin F}(\bx^k))\big\|^{2} \\
& \leq (1-p) \sigma^2_{k}+ 2 p L (F(\bx^{k})-F^{*}),
\end{align*}
where in the last inequality we used \eqref{eq:20220927b} with $\bys = P_{\argmin F}(\bx^k)$.
\hfill \qed \\
\vspace{2ex}

\noindent
\textbf{Proof of Lemma \ref{lem:20221004a}}
We recall that, by definition,
\begin{equation*}
(\forall\, i \in [n])\quad    \bphi^{k+1}_i =  \bphi^k_i + \delta_{i,i_k}(\bx^k - \bphi^k_i).
\end{equation*}
Now, set $\EE_k [\cdot] =\EE[\cdot\vert\, \Fk_k]$ with $\Fk_k= \sigma(i_0, \dots, i_{k-1})$, so that $\bphi^k_i$ and $\bx^k$ are $\Fk_k$-measurable and $i_k$ is independent of $\Fk_k$.
By definition of $\bv^k$, and using the fact that $\nabla F(\bx^{*})=0$ and inequality \eqref{eq:20220927b}, we have
\begin{align}
\EE_{k}\big[\|\bv^k\|^2\big]&=\EE_{k}\left[\Big\|\nabla f_{i_k}(\bx^k)-\nabla f_{i_k}(\bphi_{i_k}^k)+\frac{1}{n} \sum_{i=1}^n \nabla f_i(\bphi_i^k)-\nabla F(\bx^{*})\Big\|^2\right] \nonumber \\
&=\EE_{k}\bigg[\Big\|\nabla f_{i_k}(\bx^k)-\nabla f_{i_k}(\bx^{*})+\nabla f_{i_k}(\bx^{*})-\nabla f_{i_k}(\bphi_{i_k}^k)  \nonumber\\
&\qquad\qquad +\frac{1}{n} \sum_{i=1}^n \nabla f_i(\bphi_i^k)-\nabla F(\bx^{*})\Big\|^2\bigg] \nonumber \\
&\leq  2 \EE_{k}\left[\big\|\nabla f_{i_k}(\bx^k)-\nabla f_{i_k}(\bx^{*})\big\|^2\right] \nonumber \\
&\qquad\qquad +2 \EE_{k}\left[\big\|\nabla f_{i_k}(\bx^{*})-\nabla f_{i_k}(\bphi_{i_k}^k)-\EE_{k}[\nabla f_{i_k}(\bx^{*})-\nabla f_{i_k}(\bphi_{i_k}^k)]\big\|^2\right] \nonumber \\
&= \frac{2}{n} \sum_{i=1}^n \big\|\nabla f_{i}(\bx^k)-\nabla f_{i}(\bx^{*})\big\|^2+2 \EE_{k}\left[\big\|\nabla f_{i_k}(\bx^{*})-\nabla f_{i_k}(\bphi_{i_k}^k)\big\|^2\right] \nonumber \\
&\qquad\qquad -2 \bigg\|\frac{1}{n} \sum_{i=1}^n \nabla f_i (\bphi_i^k)\bigg\|^2 \nonumber \\
&\leq 4 L \big(F(\bx^k) - F_*\big) +2 \underbrace{\frac{1}{n} \sum_{i=1}^n\big\|\nabla f_i(\bphi_i^k)-\nabla f_i(\bx^{*})\big\|^2}_{\sigma_k^2} -2 \bigg\|\frac{1}{n} \sum_{i=1}^n \nabla f_i (\bphi_i^k)\bigg\|^2. \nonumber
\end{align}
For the second part \eqref{eq:20221004c}, we proceed as follows
\begin{align}
\EE_k\left[\sigma_{k+1}^2\right] &=  \frac{1}{n} \sum_{i=1}^n \EE_k\left[\big\|\nabla f_i(\bphi_i^{k+1})-\nabla f_i(\bx^{*})\big\|^2\right] \nonumber \\
&=  \frac{1}{n} \sum_{i=1}^n \EE_k\left[\big\|\nabla f_i(\bphi_i^{k}+\delta_{i,i_k}(\bx^k-\bphi^k_i))-\nabla f_i(\bx^{*})\big\|^2\right] \nonumber \\
&=  \frac{1}{n} \sum_{i=1}^n \bigg(\frac 1 n \sum_{j=1}^n \big\|\nabla f_i(\bphi_i^k+\delta_{i,j}(\bx^k-\bphi^k_i))-\nabla f_i(\bx^{*})\big\|^2\bigg) \nonumber \\
&= \frac{1}{n} \sum_{i=1}^n\left(\frac{n-1}{n}\big\|\nabla f_i(\bphi_i^k)-\nabla f_i(\bx^{*})\big\|^2+\frac{1}{n}\big\|\nabla f_i(\bx^k)-\nabla f_i(\bx^{*})\big\|^2\right) \nonumber \\
&=\left(1-\frac{1}{n}\right) \frac{1}{n} \sum_{i=1}^n\big\|\nabla f_i(\bphi_i^k)-\nabla f_i(\bx^{*})\big\|^2 +\frac{1}{n^2} \sum_{i=1}^n \big\|\nabla f_i(\bx^k)-\nabla f_i(\bx^{*})\big\|^2 \nonumber \\
&\leq\left(1-\frac{1}{n}\right) \sigma_k^2+\frac{2 L}{n} \big(F(\bx^k) - F_*\big),\nonumber
\end{align} 
where in the last inequality we used inequality \eqref{eq:20220927b} with $\bys = \bx^*$.
\hfill \qed \\


\bibliographystyle{abbrv} 
\bibliography{paper}
\end{document}